\documentclass[english, 11pt]{article}
\usepackage[numbers,square]{natbib}
\usepackage{color}
\usepackage{authblk}
\usepackage{subfiles}
\usepackage{amsmath}
\usepackage{amssymb}
\usepackage{algorithm, algpseudocode}
\usepackage{amsthm}
\usepackage{graphicx}
\usepackage{caption}
\usepackage{subcaption}
\usepackage{thmtools}
\usepackage{thm-restate}
\usepackage{tikz}
\usepackage[colorlinks,citecolor=blue,urlcolor=blue, hypertexnames=false]{hyperref}
\usepackage[a4paper]{geometry}
\newcommand{\supp}{\mathrm{supp}}

\newcommand{\mrd}{\mathrm{d}}
\newcommand{\Beta}{\mathrm{Beta}}
\newcommand{\FT}[1]{\widetilde{#1}}

\theoremstyle{plain}
\newtheorem{theorem}{Theorem}[section]
\newtheorem{lemma}[theorem]{Lemma}
\newtheorem{proposition}[theorem]{Proposition}
\newtheorem{corollary}[theorem]{Corollary}
\theoremstyle{definition}
\newtheorem{definition}[theorem]{Definition}

\theoremstyle{remark}
\newtheorem{remark}[theorem]{Remark}

\title{Statistical Properties of Nonparametric MLE under Laplace Noise}
\author{Yifei Xiong$^1$, Nianqiao P. Ju$^2$, Vinayak Rao$^1$}
\date{%
    $^1$Department of Statistics, Purdue University\\
    $^2$Department of Mathematics, Dartmouth College\\%
}

\begin{document} 
\maketitle

\begin{abstract}
Local differential privacy (LDP) protects individuals in a dataset by perturbing each measurement before release.
For real-valued data, a widely used mechanism is additive Laplace noise.
We study the problem of estimating the distribution of the latent confidential data from the privatized observations via the nonparametric maximum likelihood estimator (NPMLE) under an i.i.d.\ sampling model.
We first show that under the Laplace convolution model, the NPMLE admits a finite-dimensional reformulation in which the support is restricted to the observation set. 
This reduces the original infinite-dimensional optimization over all mixing distributions to an $n$-dimensional convex optimization over mixture weights, where $n$ is the sample size.
We then study the statistical convergence of the NPMLE under the 1-Wasserstein distance, and explicitly connect its convergence rate with the privacy noise scale.
Allowing the privacy noise level to change with the sample size, our analysis shows that the NPMLE remains consistent when the Laplace noise grows at a rate slower than $n^{3/16}$.
Conversely, when the Laplace noise is of the order $\sqrt n$ or larger, no estimator can achieve uniformly consistent recovery of the latent distribution.
\end{abstract}

\textbf{Keywords:} Nonparametric maximum likelihood estimation, Local differential privacy, Laplace mechanism, Deconvolution, Rates of convergence

\section{Introduction}\label{sec:intro}

\paragraph{Motivation.}Local differential privacy (LDP)~\citep{kasiviswanathan2011can,duchi2013local} extends the classical idea of randomized response~\citep{warner1965randomized} to modern data-collection settings where each user privatizes their own record before release.
It has emerged as a standard framework for collecting sensitive information when reliance on a trusted curator is either infeasible or undesirable.
Under the local model, the analyst observes only privatized reports rather than the confidential data; see, for example, \cite{cormode2018privacy,yang2024local,wang2020comprehensive}.
This client-side protection is particularly attractive in large-scale telemetry and crowdsourced data-collection settings, and LDP-based systems have been deployed in practice in a number of real-world systems~\citep{erlingsson2014rappor,ding2017collecting}.

From a statistical perspective, the key consequence of local privacy is that the analyst no longer observes i.i.d.\ samples from the population distribution of interest.
Instead, the observations are generated from a distribution obtained by applying a privatization mechanism to the latent data.
For real-valued observations, one of the most common local mechanisms is additive Laplace noise~\citep{dwork2006differential}. In the setting considered in this paper, the privatized observation from the $i$-th individual takes the form
\begin{equation*}
X_i=\theta_i+Z_i,
\end{equation*}
where $\theta_i$ is drawn from an unknown latent distribution and the privatization noise $Z_i$ is Laplace with known scale.
Consequently, the analyst does not observe samples from the latent distribution itself, but rather from its convolution with the Laplace distribution. Recovering the latent distribution from the privatized sample is therefore a deconvolution problem induced by the privacy mechanism.
As the noise level increases, this inversion becomes less stable. This is the basic tension studied in this paper: stronger privacy requires more noise, whereas more noise makes the latent distribution harder to recover.

\paragraph{Our contributions.} We study the use of the nonparametric maximum likelihood estimator (NPMLE) to solve this deconvolution problem. 
Our first contribution is computational. General mixture-likelihood theory guarantees that an NPMLE may exist with at most $n$ support points, but it does not identify where those support points are located. 
For the Laplace kernel, we prove that every support point must lie in the projected observation set. 
As a consequence, the infinite-dimensional optimization problem over all mixing distributions reduces exactly to a finite-dimensional optimization over mixture weights.
Our second contribution is statistical, seeking to assess how well we recover the latent mixing distribution. 
Because the NPMLE may be discrete even when the true latent distribution is not, we assess estimation error using the 1-Wasserstein distance.
Motivated by the Wasserstein deconvolution framework of \cite{rousseau2024wasserstein}, we adapt a deterministic inversion argument from their setting to the bounded one-dimensional Laplace privatization model considered here, while explicitly tracking the role of the noise scale.
This allows us to translate likelihood control of the privatized observation density into a 1-Wasserstein rate for the NPMLE and a sufficient condition under which consistency is preserved even when the Laplace noise increases with the sample size.
We also show a converse in the form of an impossibility result: 
if the Laplace noise scales at a rate faster than the square-root of the sample size, uniformly consistent recovery is impossible for any estimator based on the privatized sample.

\paragraph{Related work.} Our work is motivated by nonparametric density estimation under local differential privacy.
Foundational work in \cite{duchi2013local,duchi2018minimax} characterized the statistical cost of local privacy and developed minimax-optimal locally private procedures for several canonical estimation problems.
For nonparametric density estimation, \cite{19-BEJ1165} derived optimal rates under $\alpha$-differential privacy over Besov classes, focusing on density estimation over smoothness classes under locally private mechanisms.
A more closely related setting was considered by \cite{farokhi2020deconvoluting}, who developed deconvolution kernel methods for density estimation and regression from additive-noise locally private observations.
In contrast to both, we study likelihood-based estimation under the Laplace mechanism for bounded continuous data, focusing on the nonparametric maximum likelihood estimator (NPMLE) of the latent distribution and its convergence. 
Related work %
also includes inference via Monte Carlo EM methods~\citep{gong2022exact}, data-augmentation~\citep{ju2022data,xiong2025soma} and simulation-based methods~\citep{awan2025simulation,xiong2025simulation}, and frequentist confidence intervals or testing procedures for privatized or differentially private data~\citep{ferrando2022parametric,karwa2016inference,karwa2017finite}.
These works are complementary to ours, developing computational or uncertainty-quantification methods for privatized-data models. %

Our analysis of Wasserstein error is closely related to the deconvolution literature. %
For finite mixture models, \cite{nguyen2013convergence} studied the Wasserstein convergence of mixing measures.
For additive-noise deconvolution, \cite{dedecker2015improved} obtained Wasserstein rates and lower bounds in the one-dimensional ordinary-smooth setting. %
Since the Laplace density has an ordinary-smooth Fourier transform, these results provide a natural point of comparison, although their concern was %
classical deconvolution estimators rather than the likelihood-based methods.
The closest work to ours is \cite{scricciolo2018bayes}, which studies Bayesian and maximum likelihood estimation for Laplace mixtures and derives convergence of the mixing distribution in $L^1$-Wasserstein distance from convergence of the mixture density. %
More recently, \cite{rousseau2024wasserstein} developed general inversion inequalities for Bayesian and frequentist deconvolution models.
Our deconvolution argument builds on this line of work, but specializes it to the bounded-data setting and explicitly keeps track of how the noise level enters the resulting bound.

We study the nonparametric maximum likelihood estimator (NPMLE).
NPMLE is a classical approach to estimating mixing distributions, with foundational developments in \cite{laird1978nonparametric,lindsay1983geometry,lindsay1995mixture}.
It has also played an important role in empirical Bayes and compound-decision problems, including \cite{robbins1992empirical}, the convex-optimization formulation of \cite{koenker2014convex}, and recent heteroscedastic empirical Bayes models \citep{soloff2024multivariate}.
Several recent papers have established structural properties of Gaussian or Gaussian-smoothed models.
For example, \cite{polyanskiy2020self} studied self-regularization and support-size properties of the NPMLE in Gaussian mixture models, while \cite{scricciolo2026bayes} considered Bayesian nonparametric mixing-distribution estimation in a Gaussian-smoothed Wasserstein distance.
The Laplace kernel differs from the Gaussian kernel in two basic ways: it is non-differentiable at the origin and has heavier tails.
These features lead to a different likelihood geometry and motivate a separate analysis of the NPMLE support.
In this paper, we exploit the specific shape of the Laplace likelihood to show that the NPMLE support is contained in the projected observation set.
Taken together, our work combines likelihood-based estimation and deconvolution techniques to study the frequentist estimation of an unknown bounded mixing distribution from Laplace-privatized continuous observations.

\paragraph{Organization of the paper.}The rest of the paper is organized as follows. 
Section~\ref{sec:preliminaries} introduces the model setup and some preliminary results. 
Section~\ref{sec:npmle} shows that the NPMLE under the Laplace convolution model can be reduced to an optimization problem over the simplex and gives a corresponding EM algorithm. 
Section~\ref{sec:deconv} develops a deconvolution inequality for Laplace mixtures with explicit dependence on the noise scale. 
Section~\ref{sec:rate} uses these results to derive convergence rates under the 1-Wasserstein distance.
Numerical experiments are presented in Section~\ref{sec:exp}, and concluding remarks are given in Section~\ref{sec:conclusion}.

\section{Model and preliminaries}\label{sec:preliminaries}
We work in the local differential privacy (LDP) model, with $\theta_i \in \Theta$ denoting the confidential data of the $i$-th individual, and $X_i \in \mathcal X$ its locally privatized release.
We begin with the definition of $\epsilon$-local differential privacy; see, for example, \cite{duchi2013local,kairouz2016extremal}.

\begin{definition}[$\epsilon$-local differential privacy~\citep{duchi2013local}]
A randomized mechanism $\mathcal K:\Theta\to\mathcal X$ satisfies $\epsilon$-local differential privacy ($\epsilon$-LDP) if, for all $\theta,\theta'\in\Theta$ and all measurable $S\subseteq\mathcal X$,
\begin{equation}\label{eq:ldp-def}
\mathbb{P}\{\mathcal K(\theta)\in S\}\le e^\epsilon \mathbb{P}\{\mathcal K(\theta')\in S\}.
\end{equation}
When the distribution of $\mathcal K(\theta)$ has a conditional density $q(\cdot\mid\theta)$ on $\mathcal X$, a convenient sufficient condition for \eqref{eq:ldp-def} is the pointwise likelihood-ratio bound
\begin{equation*}
q(x\mid\theta)\le e^\epsilon q(x\mid\theta'),
\quad \forall x\in\mathcal X,\ \theta,\theta'\in\Theta.
\end{equation*}
\end{definition}
The parameter $\epsilon$ is referred to as the \textit{privacy loss budget}. Larger values correspond to reduced privacy guarantees, whereas $\epsilon=0$ signifies perfect privacy.
Throughout the paper, we consider the one-dimensional additive Laplace mechanism acting on a bounded latent domain, such as $\Theta = [-a,a]$ with $a>0$ fixed and known.
This boundedness assumption is commonly used in differential privacy to ensure finite sensitivity before adding noise~\citep{dwork2006differential,duchi2013local,duchi2018minimax}, arising from a range restriction or clipping step. 
Let $g_0$ denote the distribution of this bounded confidential variable and write the privatized release as
\begin{equation}\label{eq:model}
X_i=\theta_i+Z_i,\quad i=1,\dots,n,
\end{equation}
where $\theta_i \overset{iid}{\sim} g_0$, and $Z_i \overset{iid}{\sim} \mathrm{Lap}(0,b)$ has density
\begin{equation}\label{eq:laplace-density}
f_b(z)=\frac{1}{2b}\exp \left(-\frac{|z|}{b}\right),\quad z\in\mathbb R.
\end{equation}
We assume that the two sequences $\theta_i$ and $Z_i$ are independent, and the noise scale $b>0$ is known.
The next proposition gives the well-known privacy guarantee of this additive Laplace mechanism.

\begin{proposition}[Privacy guarantee for Laplace mechanism~\citep{dwork2006differential}]\label{prop:laplace-ldp}
Suppose that the latent variable $\theta$ takes values in $[-a,a]$. Then the mechanism
\begin{equation*}
\theta\mapsto \theta+Z,\quad Z\sim\mathrm{Lap}(0,b),
\end{equation*}
satisfies $\epsilon$-LDP with $\epsilon={2a}/{b}$. 
\end{proposition}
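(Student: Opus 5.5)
We verify the pointwise likelihood-ratio condition from the definition of $\epsilon$-LDP, which suffices because the mechanism has a conditional density. For $\theta\in[-a,a]$, the release $X=\theta+Z$ has density $q(x\mid\theta)=f_b(x-\theta)=\frac{1}{2b}\exp(-|x-\theta|/b)$ on $\mathcal X=\mathbb R$. The plan is therefore to bound the ratio $q(x\mid\theta)/q(x\mid\theta')$ uniformly over $x\in\mathbb R$ and $\theta,\theta'\in[-a,a]$, and then integrate over $S$ to recover \eqref{eq:ldp-def}.

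First, the normalizing constants $1/(2b)$ cancel, so
\begin{equation*}
\frac{q(x\mid\theta)}{q(x\mid\theta')}=\exp\left(\frac{|x-\theta'|-|x-\theta|}{b}\right).
\end{equation*}
Second, the reverse triangle inequality gives $|x-\theta'|-|x-\theta|\le|\theta-\theta'|$ for every $x$. Third, since both $\theta$ and $\theta'$ lie in $[-a,a]$, their distance is at most the sensitivity $2a$. Combining these, the ratio is at most $\exp(2a/b)=e^{\epsilon}$ with $\epsilon=2a/b$. This is exactly the pointwise condition $q(x\mid\theta)\le e^\epsilon q(x\mid\theta')$.

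To conclude, integrate this inequality over any measurable $S\subseteq\mathbb R$ to get $\mathbb P\{\theta+Z\in S\}\le e^\epsilon\,\mathbb P\{\theta'+Z\in S\}$. There is no real obstacle. The only points needing care are that the bound is uniform in $x$, which the triangle inequality guarantees, and that the worst case $|\theta-\theta'|=2a$ is attained at the endpoints $\pm a$, so the constant $2a/b$ cannot be improved in general.
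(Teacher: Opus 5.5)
Your proof is correct. The paper gives no proof of its own and simply cites \cite{dwork2006differential}; your argument is the standard one behind that citation: verify the pointwise density-ratio condition (which the paper's definition notes is sufficient) via the reverse triangle inequality with sensitivity $2a$, then integrate over $S$. Your tightness remark also holds, since $q(x\mid a)/q(x\mid -a)=e^{2a/b}$ exactly for all $x\ge a$, so taking $S=[a,\infty)$ attains the bound.
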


Proposition~\ref{prop:laplace-ldp} shows the basic privacy-utility tradeoff in this model. A larger $b$ yields stronger privacy, since $\epsilon=2a/b$ becomes smaller, but it also adds more noise and makes recovery of the latent distribution more difficult.

Under the observation model~\eqref{eq:model}, the privatized samples $X_1,\dots,X_n$ are i.i.d. from the mixture density
\begin{equation*}
m_{g_0}(x) := \int_{[-a,a]} f_b(x-\theta) g_0(\mathrm d\theta), \quad x\in\mathbb R.
\end{equation*}
Thus, this density is the convolution of the latent distribution $g_0$ with the Laplace kernel $f_b$.
More generally, for any probability measure $g\in\mathcal P([-a,a])$, we write the induced Laplace convolution density as 
\begin{equation*}
m_g(x) := \int_{[-a,a]} f_b(x-\theta) g(\mathrm d\theta),
\end{equation*}
where we suppress the dependence of $m_g$ on the known noise scale $b$ to simplify notation.
In our setting, the analyst only observes data from $m_{g_0}$, and we study nonparametric maximum likelihood estimation (NPMLE) of the latent mixing distribution $g_0$, defined as 
\begin{equation}\label{eq:npmle}
\widehat g_n \in \mathop{\mathrm{argmax}}\limits_{g\in\mathcal P([-a,a])} \ell_n(g), \quad
\ell_n(g):=\sum_{i=1}^n \log m_g(X_i).
\end{equation}
This is an infinite-dimensional optimization problem over the space $\mathcal P([-a,a])$ of all probability measures on $[-a,a]$.
Standard results for mixture likelihoods imply that, under mild conditions, there exists an NPMLE that is discrete with at most $n$ support points; see, for example, \cite{laird1978nonparametric,lindsay1983geometry}. 
However, this result does not identify where those support points are located. 
In the next section, we show that for the Laplace kernel, every support point of an NPMLE must lie in a projected observation set.

\section{Support reduction and computation of the NPMLE}\label{sec:npmle}

This section establishes that the support of NPMLE solution is a subset of the observations projected onto interval $[-a, a]$. 
This fact shows that the likelihood maximization over all mixing distributions reduces to a strictly convex finite-dimensional optimization problem over mixture weights.

\subsection{Support reduction}

We begin with a useful result from~\cite{polyanskiy2020self} that characterizes the support of the NPMLE.
\begin{restatable}[Support characterization~\citep{polyanskiy2020self}]{lemma}{restatesuppportpw}\label{lem:support-characterization}
Let $\widehat g_n$ be a solution to \eqref{eq:npmle}. Define
\begin{equation*}
D_{\widehat g_n}(\mu):=\frac{1}{n}\sum_{i=1}^n \frac{f_b(X_i-\mu)}{m_{\widehat g_n}(X_i)}, \quad \mu\in[-a,a].
\end{equation*}
Then $D_{\widehat g_n}(\mu)\le 1$ for all $\mu\in[-a,a]$, and $D_{\widehat g_n}(\mu)=1$ for every $\mu\in \supp(\widehat g_n)$. 
Consequently,
\begin{equation*}
\supp(\widehat g_n)\subseteq \arg\max_{\mu\in[-a,a]} D_{\widehat g_n}(\mu).
\end{equation*}
\end{restatable}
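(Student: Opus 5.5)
The plan is the standard first-order (KKT) optimality argument for concave maximization over the convex set $\mathcal P([-a,a])$. The log-likelihood $\ell_n(g)=\sum_i\log m_g(X_i)$ is concave in $g$, since $g\mapsto m_g(X_i)$ is linear and $\log$ is concave. Moreover, $m_{\widehat g_n}(X_i)>0$ for every $i$, because the Laplace kernel is strictly positive; so the directional derivative at $\widehat g_n$ is well defined. For any $\mu\in[-a,a]$ and $t\in[0,1]$, set $g_t=(1-t)\widehat g_n+t\delta_\mu$. This gives $m_{g_t}(X_i)=m_{\widehat g_n}(X_i)+t\bigl(f_b(X_i-\mu)-m_{\widehat g_n}(X_i)\bigr)$, and therefore
\begin{equation*}
\frac{\mrd}{\mrd t}\ell_n(g_t)\Big|_{t=0^+}=\sum_{i=1}^n\frac{f_b(X_i-\mu)}{m_{\widehat g_n}(X_i)}-n = n\bigl(D_{\widehat g_n}(\mu)-1\bigr).
\end{equation*}
Because $\widehat g_n$ maximizes $\ell_n$, we have $\ell_n(g_t)\le\ell_n(\widehat g_n)$ for all small $t>0$. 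Hence this one-sided derivative is $\le 0$, which gives $D_{\widehat g_n}(\mu)\le1$ for every $\mu\in[-a,a]$. The derivative exists since each term $\log(c_i+td_i)$ with $c_i>0$ is smooth near $t=0$.

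Next I would show that equality holds on the support. Integrating $D_{\widehat g_n}$ against $\widehat g_n$ and using Fubini gives
\begin{equation*}
\int D_{\widehat g_n}(\mu)\,\widehat g_n(\mrd\mu)=\frac1n\sum_{i=1}^n\frac{\int f_b(X_i-\mu)\widehat g_n(\mrd\mu)}{m_{\widehat g_n}(X_i)}=1.
\end{equation*}
Combined with $D_{\widehat g_n}\le 1$, this forces $D_{\widehat g_n}=1$ for $\widehat g_n$-almost every $\mu$.

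The only delicate step is upgrading ``almost everywhere'' to ``every point of the support''. I would use continuity. The map $\mu\mapsto f_b(X_i-\mu)$ is continuous (indeed Lipschitz), and the denominators are fixed positive constants, so $D_{\widehat g_n}$ is continuous on $[-a,a]$. Suppose $\mu_0\in\supp(\widehat g_n)$ had $D_{\widehat g_n}(\mu_0)<1$. Then $D_{\widehat g_n}<1$ on a neighborhood $U$ of $\mu_0$. By the definition of the support, $\widehat g_n(U)>0$, which contradicts the a.e.\ equality. Hence $D_{\widehat g_n}(\mu)=1$ on all of $\supp(\widehat g_n)$. Since the maximum of $D_{\widehat g_n}$ over $[-a,a]$ is at most $1$ and is attained on the support, every support point is a maximizer. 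This yields $\supp(\widehat g_n)\subseteq\arg\max_{\mu\in[-a,a]}D_{\widehat g_n}(\mu)$.
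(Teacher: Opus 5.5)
Your proposal is correct and follows the paper's proof: the one-sided derivative of $\ell_n$ along $(1-t)\widehat g_n+t\delta_\mu$ gives $D_{\widehat g_n}\le 1$, and averaging $D_{\widehat g_n}$ against $\widehat g_n$ gives exactly $1$. Your continuity argument, which upgrades the $\widehat g_n$-a.e.\ equality to equality at every point of $\supp(\widehat g_n)$, makes explicit a step the paper only asserts; the paper implicitly relies on $\{D_{\widehat g_n}=1\}$ being closed.
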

For completeness, we include a proof of this in Appendix~\ref{app:npmle}. 
In the next theorem, we show that for the Laplace mechanism, the maximizers of $D_{\widehat g_n}(\mu)$ are contained within the observations projected onto $[-a,a]$.
This implies that every support point of an NPMLE must coincide with a projected observation, thereby identifying the support locations explicitly.

\begin{theorem}[Support reduction for the Laplace NPMLE]\label{thm:laplace-support}
Let
\begin{equation*}
\Pi_{[-a,a]}(x):=\max\{-a,\min\{x,a\}\}, \quad x\in\mathbb R,
\end{equation*}
denote the projection onto $[-a,a]$. 
Then every $\widehat g_n$ solving \eqref{eq:npmle} satisfies
\begin{equation*}
\supp(\widehat g_n)\subseteq \left\{\Pi_{[-a,a]}(X_1),\dots,\Pi_{[-a,a]}(X_n)\right\}.
\end{equation*}
\end{theorem}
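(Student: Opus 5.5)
By Lemma~\ref{lem:support-characterization}, it suffices to show that every maximizer of $D_{\widehat g_n}$ on $[-a,a]$ lies in the set $P:=\{\Pi_{[-a,a]}(X_1),\dots,\Pi_{[-a,a]}(X_n)\}$. We write $w_i := 1/(n\, m_{\widehat g_n}(X_i))>0$; these are fixed positive constants once $\widehat g_n$ is fixed. Then
\begin{equation*}
D_{\widehat g_n}(\mu)=\sum_{i=1}^n \frac{w_i}{2b}\exp\left(-\frac{|X_i-\mu|}{b}\right).
\end{equation*}
The plan is to exploit that each summand is a strictly convex function of $\mu$ on any interval not containing $X_i$. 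On such an interval it equals $c\,e^{\pm\mu/b}$, which is strictly convex in $\mu$.

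Order the points of $P\cup\{-a,a\}$ and consider any open interval $I=(p,q)\subset[-a,a]$ between consecutive points of this set. No $X_i$ lies in $I$. Indeed, if $X_i\in(p,q)\subset(-a,a)$, then $\Pi_{[-a,a]}(X_i)=X_i\in P$ would lie strictly between consecutive points, which is a contradiction. Hence on $\bar I$ each term $\exp(-|X_i-\mu|/b)$ equals either $e^{(\mu-X_i)/b}$ (when $X_i\ge q$) or $e^{(X_i-\mu)/b}$ (when $X_i\le p$). This is also valid at the endpoints, since $|X_i-\mu|$ is affine on $\bar I$. So $D_{\widehat g_n}$ restricted to $\bar I$ is a positive combination of strictly convex exponentials, and is therefore strictly convex on $\bar I$. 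A strictly convex function on a closed interval attains its maximum only at the endpoints, never at an interior point. Hence no maximizer lies in any such $I$, so every maximizer lies in $P\cup\{-a,a\}$.

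The remaining obstacle is the endpoints $\pm a$ when they are not already in $P$. Suppose $a\notin P$. Then no $X_i\ge a$, since $\Pi_{[-a,a]}(X_i)=a$ for any such $X_i$. So all $X_i<a$, and on $[\max P, a]$ each term is $e^{(X_i-\mu)/b}$, which is strictly decreasing in $\mu$. Hence $D_{\widehat g_n}(a)<D_{\widehat g_n}(\max P)$, and $a$ is not a maximizer. The case $-a$ is symmetric. Combining the two steps, $\arg\max D_{\widehat g_n}\subseteq P$, and Lemma~\ref{lem:support-characterization} gives the claim.

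The only subtle points are the bookkeeping showing that no observation falls strictly inside a gap, and the endpoint case. The strict convexity argument itself is routine.
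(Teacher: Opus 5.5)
Your proof is correct and follows the paper's argument: you reduce via Lemma~\ref{lem:support-characterization}, then use strict convexity of $D_{\widehat g_n}$ on each gap between consecutive projected observations to exclude interior maximizers. Your separate monotonicity argument for $\pm a$ when they are not projected observations fills a step that the paper's main-text proof skips (convexity alone on a component such as $[-a,u_1)$ does not exclude a maximum at $-a$). It is the same boundary argument the paper gives in its proof of Corollary~\ref{cor:general-kernel-support}.
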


\begin{proof}[Proof of Theorem~\ref{thm:laplace-support}]
By Lemma~\ref{lem:support-characterization}, it suffices to show that every maximizer of
\begin{equation*}
D_{\widehat g_n}(\mu) := \frac{1}{n}\sum_{i=1}^n \frac{f_b(X_i-\mu)}{m_{\widehat g_n}(X_i)}, \quad \mu\in[-a,a],
\end{equation*}
belongs to the set $\left\{\Pi_{[-a,a]}(X_1),\dots,\Pi_{[-a,a]}(X_n)\right\}$.

Write $c_i:=\frac{1}{n m_{\widehat g_n}(X_i)}>0$ for $i=1,\dots,n$.
Since $f_b(x)=e^{-|x|/b}/2b$, we have
\begin{equation*}
D_{\widehat g_n}(\mu) = \frac{1}{2b}\sum_{i=1}^n c_i \exp\left(-\frac{|X_i-\mu|}{b}\right), \quad \mu\in[-a,a].
\end{equation*}

Let $u_1<\cdots<u_{\nu}$ be the distinct values among $\Pi_{[-a,a]}(X_1),\dots,\Pi_{[-a,a]}(X_n).$
Consider any open and connected interval $I\subset [-a,a]\setminus\{u_1,\dots,u_{\nu}\}$. 
Then $I$ contains none of the projected observations $\Pi_{[-a,a]}(X_1),\dots,\Pi_{[-a,a]}(X_n)$. Hence, for each $i$, the sign of $X_i-\mu$ is constant on $I$.
Therefore, for every $\mu\in I$, each term $\exp(-|X_i-\mu|/b)$ is twice continuously differentiable in $\mu$, and
\begin{equation*}
\frac{\mrd^2}{\mrd\mu^2}D_{\widehat g_n}(\mu) = \frac{1}{2b^3}\sum_{i=1}^n c_i \exp \left(-\frac{|X_i-\mu|}{b}\right) >0.
\end{equation*}
Thus $D_{\widehat g_n}$ is strictly convex on every connected component of $[-a,a]\setminus\{u_1,\dots,u_{\nu}\}$, and therefore cannot attain its maximum at an interior point of any such interval.
It follows that every maximizer of $D_{\widehat g_n}$ over $[-a,a]$ must belong to
\begin{equation*}
\{u_1,\dots,u_{\nu}\} = \left\{\Pi_{[-a,a]}(X_1),\dots,\Pi_{[-a,a]}(X_n)\right\}.
\end{equation*}
Applying Lemma~\ref{lem:support-characterization} yields
\begin{equation*}
\supp(\widehat g_n)\subseteq \left\{\Pi_{[-a,a]}(X_1),\dots,\Pi_{[-a,a]}(X_n)\right\}.
\end{equation*}
\end{proof}

The same argument extends to kernels of the form $\varphi(|x-\mu|)$ whenever $\varphi$ is strictly decreasing and strictly convex on $(0,\infty)$.

\begin{restatable}[Support reduction for convex radial kernels]{corollary}{restategeneralkernel}\label{cor:general-kernel-support}
Let $[a_1,a_2]\subset\mathbb R$, and suppose the density for noise $Z_i$ has the form
\begin{equation*}
f(x-\mu)=\varphi(|x-\mu|), \quad x\in\mathbb R,\ \mu\in[a_1,a_2],
\end{equation*}
where $\varphi:[0,\infty)\to(0,\infty)$ satisfies $\varphi\in C^2((0,\infty))$, and for all $t>0$, we have
$\varphi'(t)<0$ and $\varphi''(t)>0$. Let $\Pi_{[a_1,a_2]}(x):=\max\{a_1,\min\{x,a_2\}\}$, denote the projection onto $[a_1,a_2]$. Then every NPMLE $\widehat g_n$ over $\mathcal P([a_1,a_2])$ satisfies
\begin{equation*}
\supp(\widehat g_n)\subseteq \left\{\Pi_{[a_1,a_2]}(X_1),\dots,\Pi_{[a_1,a_2]}(X_n)\right\}.
\end{equation*}
\end{restatable}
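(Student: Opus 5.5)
The proof follows the argument for Theorem~\ref{thm:laplace-support}, with the exponential replaced by a general $\varphi$. First we check that Lemma~\ref{lem:support-characterization} still applies with the kernel $f(x-\mu)=\varphi(|x-\mu|)$ and the parameter space $[a_1,a_2]$. Its proof only uses the first-order optimality of $\widehat g_n$ in the convex set $\mathcal P([a_1,a_2])$: perturb towards a point mass $\delta_\mu$, then differentiate $\ell_n$ at $t=0$. This works for any positive kernel that is continuous in $\mu$. We also need $m_{\widehat g_n}(X_i)>0$, which holds since $\varphi>0$. Continuity of $\mu\mapsto\varphi(|x-\mu|)$ at $\mu=x$ follows from $\varphi$ being continuous on $[0,\infty)$; $\varphi\in C^2((0,\infty))$ already gives continuity away from $0$. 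If continuity at $0$ were in doubt, we would note that a density value at a single point is irrelevant and take $\varphi(0)=\lim_{t\downarrow0}\varphi(t)$, which exists by monotonicity. Hence
\[
D(\mu)=\sum_{i=1}^n c_i\,\varphi(|X_i-\mu|),\qquad c_i=\frac{1}{n\,m_{\widehat g_n}(X_i)}>0,
\]
satisfies $D\le 1$ on $[a_1,a_2]$, and $D=1$ on $\supp(\widehat g_n)$. So every support point is a maximizer of $D$ over $[a_1,a_2]$.

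Next, let $u_1<\dots<u_\nu$ be the distinct projected observations $\Pi_{[a_1,a_2]}(X_i)$. Take any open connected component $I$ of $[a_1,a_2]\setminus\{u_1,\dots,u_\nu\}$, relatively open in $[a_1,a_2]$. We claim no $X_i$ lies in $I$. Indeed, if $X_i\in[a_1,a_2]$ then $X_i=\Pi(X_i)$ is one of the removed $u_j$, and if $X_i\notin[a_1,a_2]$ it is not in $I$ either. Hence $X_i-\mu$ has constant nonzero sign on $I$, so $|X_i-\mu|=\pm(X_i-\mu)$ is affine in $\mu$ with slope $\pm1$ and stays in $(0,\infty)$. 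The chain rule then gives
\[
\frac{\mrd^2}{\mrd\mu^2}\varphi(|X_i-\mu|)=\varphi''(|X_i-\mu|)>0 \quad\text{on } I.
\]
Therefore $D$ is strictly convex on $I$.

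The step needing the most care is the endpoints. If $a_1\notin\{u_j\}$, the component containing $a_1$ is half-open, of the form $[a_1,u_1)$. Strict convexity on the interior rules out interior maxima, but the endpoint $a_1$ itself must also be excluded. Here we use $\varphi'<0$. On $[a_1,u_1)$ every $X_i$ satisfies $X_i>\mu$: if some $X_i<a_1$, then $\Pi(X_i)=a_1$ would be among the $u_j$, contradicting $a_1\notin\{u_j\}$. So $D'(\mu)=-\sum_i c_i\varphi'(X_i-\mu)>0$, and $D$ is strictly increasing there; this excludes $a_1$ as a maximizer. The argument at $a_2$ is symmetric. The same monotonicity argument covers the degenerate case where all $X_i$ lie on one side of the interval. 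With the endpoints handled, a strictly convex continuous function on an open interval cannot attain its supremum over $[a_1,a_2]$ at an interior point: it would lie strictly below the chord between nearby points. Hence every maximizer of $D$ lies in $\{u_1,\dots,u_\nu\}$. Combining this with the lemma gives the claimed inclusion of $\supp(\widehat g_n)$.
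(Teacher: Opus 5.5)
Your proposal is correct and takes essentially the same route as the paper. It applies the support characterization lemma, uses strict convexity of $D$ on the components of $[a_1,a_2]\setminus\{u_1,\dots,u_\nu\}$, and rules out an endpoint that is not a projected observation via the monotonicity coming from $\varphi'<0$.

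One small point: the hedge about continuity of $\varphi$ at $0$ is unnecessary. Redefining $\varphi(0)$ is also not harmless, since it changes $\ell_n$ for mixing measures with atoms placed exactly on observations. The hedge is not needed because $\widehat g_n$ is concentrated on $\{D=1\}$, and you have already shown this set lies in the finite closed set $\{u_1,\dots,u_\nu\}$.
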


\begin{remark}
The Laplace kernel is recovered by $\varphi(t)=(2b)^{-1}e^{-t/b}$.
Other examples satisfying the same shape conditions include
$\varphi(t)=C_p(1+t)^{-p}$ for $p>1$ and
$\varphi(t)=C_p e^{-t^p}$ for $0<p\le 1$,
where the constants $C_p$ normalize the kernels when needed.
\end{remark}

Theorem~\ref{thm:laplace-support} shows that, for the Laplace kernel, likelihood maximization over $\mathcal P([-a,a])$ is equivalent to maximizing over probability measures supported on the projected observation set $\{\Pi_{[-a,a]}(X_1),\dots,\Pi_{[-a,a]}(X_n)\}$.
Thus, once these support locations are fixed, the remaining optimization concerns only the mixture weights assigned to them. This leads directly to the EM algorithm in the next subsection.

\subsection{EM algorithm}

Let $u_1,\dots,u_{\nu}$ be the distinct values among $\Pi_{[-a,a]}(X_1),\ldots,\Pi_{[-a,a]}(X_n)$.
By Theorem~\ref{thm:laplace-support}, an NPMLE may be sought among distributions of the form
\begin{equation*}
g=\sum_{j=1}^{\nu} w_j\delta_{u_j}, \quad w_j\ge 0,\quad \sum_{j=1}^{\nu} w_j=1,
\end{equation*}
where $\delta_{u_j}$ denotes the Dirac probability measure at $u_j$.
Equivalently, one maximizes
\begin{equation}\label{eq:finite-loglik}
\ell_n(w_1,\dots,w_{\nu})=\sum_{i=1}^n\log \left(\sum_{j=1}^{\nu} w_j f_b(X_i-u_j)\right)
\end{equation}
over all nonnegative weights summing to one. 
The next corollary shows that this weights-only optimization problem is well-behaved.

\begin{restatable}[Strict concavity of the reduced log-likelihood]{corollary}{restatestrictconcavity}\label{cor:strict-concavity}
The function $\ell_n$ in \eqref{eq:finite-loglik} is strictly concave on the simplex
\begin{equation*}
\Delta_\nu:=\left\{w\in[0,1]^\nu:\sum_{j=1}^\nu w_j=1\right\}.
\end{equation*}
Consequently, the reduced optimization problem has a unique maximizer in the weight vector $w=(w_1,\dots,w_\nu)$.
\end{restatable}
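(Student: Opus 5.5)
The plan is to write the reduced log-likelihood as a concave function composed with a linear map, and then to show that this linear map is injective. Let $A\in\mathbb R^{n\times\nu}$ have entries $A_{ij}:=f_b(X_i-u_j)>0$. Then
\begin{equation*}
\ell_n(w)=\sum_{i=1}^n \log\,(Aw)_i ,
\end{equation*}
and $(Aw)_i>0$ for every $w\in\Delta_\nu$, so $\ell_n$ is finite and smooth on a neighbourhood of the simplex. Each map $t\mapsto\log t$ is strictly concave, so for $w,w'\in\Delta_\nu$ and $\lambda\in(0,1)$ we get
\begin{equation*}
\ell_n(\lambda w+(1-\lambda)w')\ge\lambda\ell_n(w)+(1-\lambda)\ell_n(w').
\end{equation*}
Equality holds if and only if $(Aw)_i=(Aw')_i$ for all $i$, that is, $A(w-w')=0$. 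Equivalently, the Hessian is $-A^\top\mathrm{diag}\bigl((Aw)_i^{-2}\bigr)A$, which is negative definite exactly when $A$ has trivial kernel. Strict concavity on $\Delta_\nu$ therefore follows once the columns of $A$ are shown to be linearly independent. Uniqueness of the maximizer then follows from compactness of $\Delta_\nu$, which gives existence, together with strict concavity.

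To prove the columns are independent, I will extract a $\nu\times\nu$ submatrix of $A$ and show it is nonsingular. For each $j$, pick an index $i(j)$ with $\Pi_{[-a,a]}(X_{i(j)})=u_j$.

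\emph{Interior points.} If $u_j\in(-a,a)$, then $X_{i(j)}=u_j$, and row $i(j)$ of $A$ equals $\bigl(f_b(u_j-u_k)\bigr)_k$.

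\emph{Left endpoint.} If $u_j=-a$, then $X_{i(j)}\le -a\le u_k$ for all $k$. Hence
\begin{equation*}
f_b(X_{i(j)}-u_k)=e^{-(-a-X_{i(j)})/b}\,f_b(-a-u_k),
\end{equation*}
so the row is a positive multiple of $\bigl(f_b(u_j-u_k)\bigr)_k$.

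\emph{Right endpoint.} The case $u_j=a$ is symmetric and gives the same conclusion.

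Consequently, the submatrix formed by the rows $i(1),\dots,i(\nu)$ equals $\mathrm{diag}(d_j)\,K$ with all $d_j>0$, where
\begin{equation*}
K_{jk}:=\frac{1}{2b}\exp\bigl(-|u_j-u_k|/b\bigr).
\end{equation*}

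The main step is to show that $K$ is nonsingular. The points $u_1<\dots<u_\nu$ are distinct, and $K$ is the Gram matrix of the kernel $e^{-|s-t|/b}$ at these points. This kernel is strictly positive definite: by Bochner's theorem its Fourier transform is a Cauchy density, which is strictly positive everywhere. For $c\neq0$ this gives
\begin{equation*}
c^\top Kc=\int\Bigl|\sum_j c_j e^{\mathrm i\omega u_j}\Bigr|^2\,\widehat{k}(\omega)\,\mathrm d\omega>0,
\end{equation*}
because a nontrivial trigonometric sum with distinct frequencies cannot vanish almost everywhere. As an alternative, one can observe that $K$ is proportional to the covariance matrix of a stationary Ornstein--Uhlenbeck process at distinct times. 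Its Markov structure makes the inverse explicitly tridiagonal, and the determinant equals $\prod_{j}(1-e^{-2(u_{j+1}-u_j)/b})>0$ up to a constant.

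Either argument shows that $K$ is invertible. Hence $A$ has rank $\nu$ and $\ker A=\{0\}$, which completes the proof of strict concavity and uniqueness. The expected obstacle is only this positive-definiteness step and the careful handling of the clipped endpoint observations, both of which are covered above.
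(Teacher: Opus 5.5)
Your proof is correct, and the concavity half matches the paper's. The paper also writes the Hessian quadratic form as $-\sum_i (\mathbf S v)_i^2/(\mathbf S w)_i^2$. It thereby reduces strict concavity and uniqueness to injectivity of the likelihood matrix $\mathbf S$ (your $A$). Your chord argument via strict concavity of $\log$ is an equivalent and slightly more direct way of saying this.

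The real difference is in how you prove full column rank, which is the paper's Lemma~\ref{lem:laplace-full-rank}. The paper takes a combination $\sum_j c_j f_b(\cdot-u_j)$ vanishing at the data and eliminates coefficients one by one:
\begin{itemize}
\item evaluating at the smallest observation gives $\sum_j c_j e^{-u_j/b}=0$;
\item evaluating at the interior points $u_2,\dots,u_{\nu-1}$, which must be actual observations, kills $c_1,\dots,c_{\nu-2}$ one at a time via a factor $\sinh((u_{k+1}-u_k)/b)$;
\item a separate two-case argument handles $c_{\nu-1},c_\nu$, depending on whether $u_\nu=a$ arises only from observations strictly above $a$.
\end{itemize}

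You instead choose one observation per support point. The factorisation of the exponential shows that a clipped observation gives a row proportional to the row of the corresponding endpoint. This reduces the problem to the square matrix $\mathrm{diag}(d_j)K$, where $K$ is the Laplace-kernel Gram matrix at distinct points and is nonsingular by strict positive definiteness.

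Your route is uniform and names the structural reason for injectivity. It needs no case split, and degenerate configurations are handled automatically. The paper's starting claim $X_{(1)}\le u_1$ actually fails when every observation exceeds $a$, although then $\nu=1$ and the conclusion is trivial.

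The paper's route is entirely elementary, and your Ornstein--Uhlenbeck variant recovers this. Subtracting $e^{-(u_2-u_1)/b}$ times the second row of $2bK$ from the first leaves $(1-e^{-2(u_2-u_1)/b},0,\dots,0)$. Induction on trailing principal submatrices then proves your determinant formula, and this is exactly the paper's elimination written in matrix form.
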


\begin{algorithm}[t]
\caption{EM algorithm for the NPMLE under the Laplace convolution model}
\label{alg:em}
\begin{algorithmic}[1]
\Require Observations $X_1,\dots,X_n$;
tolerance $\Delta>0$
\State Compute the projected observations $\Pi_{[-a,a]}(X_1),\dots,\Pi_{[-a,a]}(X_n)$, with $u_1,\dots,u_{\nu}$ their distinct values
\State Initialize weights $w_1^{(0)},\dots,w_{\nu}^{(0)}>0$ with $\sum_{j=1}^{\nu} w_j^{(0)}=1$, set $t\gets 0$
\Repeat
\For{$i=1,\dots,n$ and $j=1,\dots,{\nu}$}
\State Compute posterior responsibility of $X_i$ for component $j$: \quad
\(
r_{ij}^{(t)}
\gets
\frac{w_j^{(t)} f_b(X_i-u_j)}
{\sum_{k=1}^{\nu} w_k^{(t)} f_b(X_i-u_k)}
\)
\EndFor
\For{$j=1,\dots,{\nu}$}
\State Update the weight for support point $u_j: \quad$
\(w_j^{(t+1)}
\gets
\frac{1}{n}\sum_{i=1}^n r_{ij}^{(t)}
\)
\EndFor
\State $t\gets t+1$
\Until{$\left|\ell_n(w^{(t)})-\ell_n(w^{(t-1)})\right|<\Delta$}
\State \Return $\widehat g_n=\sum_{j=1}^{\nu} w_j^{(t)}\delta_{u_j}$
\end{algorithmic}
\end{algorithm}

We compute this unique maximizer using the EM algorithm for a finite mixture with fixed component locations.
Given the current weights, the E-step computes the posterior responsibilities, and the M-step updates the weights by averaging these responsibilities over the sample.

Algorithm~\ref{alg:em} applies the EM updates and provides a practical way to compute the estimator.
The EM updates increase the observed log-likelihood monotonically, and standard EM convergence theory implies that every limit point is a stationary point of the reduced likelihood~\citep{dempster1977maximum,wu1983convergence}. 
Since $\ell_n$ is strictly concave on $\Delta_\nu$, any constrained stationary point is the unique global maximizer.
Thus any limit point of the EM iterates is the unique maximizer.
In our experiments, we run Algorithm~\ref{alg:em} until the likelihood increment is below the prescribed tolerance.

\section{Deconvolution inequality}\label{sec:deconv}

Section~\ref{sec:npmle} showed that the NPMLE $\widehat g_n$ under the Laplace convolution model is a discrete distribution with at most $n$ atoms, that the atoms coincide with the observations projected onto the sample space, and that it can be computed efficiently with the EM algorithm.
In this section, we develop a deconvolution inequality that will be used in Section~\ref{sec:rate} to study the convergence rate of $\widehat g_n$ as an estimator of the true mixing measure $g_0$.
Since $\widehat g_n$ is an atomic measure while $g_0$ may be arbitrary, we measure their discrepancy using the 1-Wasserstein distance~\cite{villani2009optimal}.

Recall that $g_0\in\mathcal P([-a,a])$ and let $g\in\mathcal P([-a,a])$ be any competing measure.
Write $G_0$ and $G$ for their distribution functions, respectively.
We also write
\begin{equation*}
m_{g_0}=g_0*f_b,\quad m_g=g*f_b,
\end{equation*}
for the corresponding privatized observation densities, and denote their distribution functions by $M_{g_0}$ and $M_g$.
Our aim is to translate a discrepancy between the observable densities $m_g$ and $m_{g_0}$ into a bound on the error $W_1(g,g_0)$.

Our argument follows the frequency-splitting deconvolution strategy of \cite{rousseau2024wasserstein},
but is specialized here to the additive Laplace privatization model. 
This specialization allows us to explicitly quantify the dependence on the support radius $a$ and the noise scale $b$, allowing us to later vary privacy noise with sample size $n$. 
The proof proceeds by first smoothing the latent distribution functions with a kernel $k$ at bandwidth $h$. This introduces a bias term, but one that can be controlled by $h$. Importantly, after smoothing, we can work with the Fourier transforms of the latent distributions. 
We then separately control the low- and high-frequency parts of the Fourier transform of the difference. 
Detailed proofs of this section are provided in Appendix~\ref{app:deconv}.

We begin with some notation. 
For a probability measure $g$ with distribution function $G$, and an integrable function $f\in L^1(\mathbb R)$, define
\begin{equation*}
(g*f)(x):=\int_{\mathbb R} f(x-y)g(\mrd y),\quad
(G*f)(x):=\int_{\mathbb R} G(x-y)f(y)\mrd y.
\end{equation*}
Thus, the distribution functions $M_{g_0}$ and $M_g$ satisfy
$M_{g_0}=G_0*f_b$ and $M_g=G*f_b$.
For any $f\in L^1(\mathbb R)$, define its Fourier transform and inverse Fourier transform by
\begin{equation*}
(\mathfrak F f)(t):=\int_{\mathbb R}e^{itx}f(x)\mrd x,\quad
(\mathfrak F^{-1}\varphi)(x):=\frac{1}{2\pi}\int_{\mathbb R}e^{-itx}\varphi(t)\mrd t.
\end{equation*}
For notational convenience, we write $\FT{f}:=\mathfrak F f$.
Note that the Laplace density $f_b(x)$ has Fourier transform
$\FT{f_b}(t)=(1+b^2t^2)^{-1}$.
Fix a smoothing kernel
$k\in L^1(\mathbb R)\cap L^2(\mathbb R)$ that is even and satisfies
\begin{equation*}
\int_{\mathbb R}k(x) \mrd x=1, \quad \int_{\mathbb R}|x| |k(x)| \mrd x<\infty,
\end{equation*}
and whose Fourier transform $\FT{k}$ is compactly supported. 
For $h>0$, we define the kernel $k_h$ with bandwidth $h$ as
\begin{equation*}
k_h(x):=\frac{1}{h}k(x/h), \quad \FT{k_h}(t)=\FT{k}(ht).
\end{equation*}

In the Fourier domain, we can express $G*k_h$, the kernel-smoothed distribution function at bandwidth $h$, in terms of the privatized distribution function $M_g=G*f_b$ as
\begin{equation*}
\FT{G*k_h}(t)=\FT{M_g}(t) (1+b^2t^2)\FT{k}(ht) := \FT{M_g}(t) w_{h,b}(t) .
\end{equation*}
Let $\chi:\mathbb R\to\mathbb R$ be an even function in $C^1$ satisfying
\begin{equation*}
0\le \chi\le 1, \quad \chi(t)=1 \ \text{for } |t|\le 1, \quad \chi(t)=0 \ \text{for } |t|\ge 2.
\end{equation*}
We now split the multiplier $w_{h,b}(t)$ into low- and high-frequency parts using $\chi$. Write
\begin{equation*}
w_{1,h,b}(t):=\FT{k}(ht)\chi(bt)(1+b^2t^2), \quad w_{2,h,b}(t):=\FT{k}(ht)\left(1-\chi(bt)\right)(1+b^2t^2).
\end{equation*}
Define the terms
\begin{align*}
k_{j,h,b} &:=\mathfrak F^{-1}(w_{j,h,b}),\ j=1,2,\quad \text{and} \\
K_{2,h,b}(x) &:=\int_{-\infty}^x k_{2,h,b}(u)\mrd u.
\end{align*}
Finally, denote the smoothing bias by
$B_{g,h}:=G-G*k_h.$ \\
The following decomposition is our starting point.

\begin{restatable}[Frequency-split decomposition]{proposition}{restatefrequencysplit}\label{prop:freq-split}
For every $h>0$,
\begin{equation}\label{eq:FGKh-decomp}
G*k_h = M_g*k_{1,h,b} + m_g*K_{2,h,b}.
\end{equation}
Consequently, 
\begin{equation}\label{eq:FG-diff-decomp}
G-G_0=B_{g,h}-B_{g_0,h}+(M_g-M_{g_0})*k_{1,h,b}+(m_g-m_{g_0})*K_{2,h,b}.
\end{equation}
\end{restatable}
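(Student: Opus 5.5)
The plan is to prove \eqref{eq:FGKh-decomp} in the Fourier domain and then obtain \eqref{eq:FG-diff-decomp} by subtraction. The difficulty is that $G$ and $M_g$ are distribution functions, not in $L^1$, so their Fourier transforms are only tempered distributions. I would therefore work with the densities and measures, where everything is integrable, and recover the distribution-function statement by integration.

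\textbf{Step 1 (identity for densities).} Since $g*k_h$ is the derivative of $G*k_h$, it is natural to first show $g*k_h = m_g*k_{1,h,b} + m_g*k_{2,h,b}$. The multipliers satisfy $w_{1,h,b}+w_{2,h,b}=\FT{k}(ht)(1+b^2t^2)$. Both are compactly supported because $\FT{k}$ is, and they are bounded and continuous (the factor $\chi$ is $C^1$). Hence $k_{j,h,b}=\mathfrak F^{-1}w_{j,h,b}$ are bounded, continuous $L^2$ functions.

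Taking Fourier transforms of the finite measure $g$ gives
\[
\FT{m_g}(t)\,(1+b^2t^2)\FT{k}(ht)=\FT{g}(t)\FT{k}(ht)=\FT{g*k_h}(t),
\]
using $\FT{f_b}(t)=(1+b^2t^2)^{-1}$. Both sides are $L^1\cap L^2$ functions of $t$, so Fourier inversion yields the density identity pointwise. Here $m_g*k_{j,h,b}$ is well defined because $m_g\in L^1$ and $k_{j,h,b}$ is bounded.

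\textbf{Step 2 (integrate up to distribution functions).} On the low-frequency term, $M_g*k_{1,h,b}$ has derivative $m_g*k_{1,h,b}$, by differentiating under the integral. On the high-frequency term, $m_g*K_{2,h,b}$ has derivative $m_g*k_{2,h,b}$.

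So $G*k_h$ and $M_g*k_{1,h,b}+m_g*K_{2,h,b}$ have the same derivative and differ by a constant. Let $x\to-\infty$ to fix it:
\begin{itemize}
\item $G*k_h(x)\to 0$, since $k\in L^1$.
\item $M_g*k_{1,h,b}(x)\to 0$.
\item $m_g*K_{2,h,b}(x)\to 0$.
\end{itemize}
Then the constant is zero.

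\textbf{Main obstacle.} The hard part is making sense of $K_{2,h,b}$ and of these limits, since $k_{2,h,b}$ need not be integrable and $k_{1,h,b}$ need not be either. I would first try to establish integrability and decay from the smoothness of the multipliers. Since $w_{1,h,b}$ is $C^1$ with compact support, $k_{1,h,b}(x)=O(1/|x|)$ and lies in $L^2$. To get integrability I would require or use enough smoothness of $\FT{k}$ and $\chi$ to gain $O(x^{-2})$ decay.

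For $k_{2,h,b}$, the multiplier $w_{2,h,b}$ vanishes near $0$, so I would write $w_{2,h,b}(t)=(-it)\,\psi(t)$ with $\psi=w_{2,h,b}/(-it)$ smooth and compactly supported. Then $K_{2,h,b}=\mathfrak F^{-1}\psi$ up to sign convention, which is a well-defined bounded function vanishing at $\pm\infty$; its integral $\int k_{2,h,b}$ equals $w_{2,h,b}(0)=0$.

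With this, dominated convergence gives the limits at $-\infty$. If these limits are delicate, an alternative is to check the identity distributionally: both sides are tempered distributions with matching Fourier transforms, and no polynomial can be added because both sides vanish at $-\infty$.

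\textbf{Step 3 (the difference identity).} Apply \eqref{eq:FGKh-decomp} to $g$ and to $g_0$ and subtract, using linearity of convolution in the measure. Then write $G-G_0=(G-G*k_h)-(G_0-G_0*k_h)+(G*k_h-G_0*k_h)$, which is \eqref{eq:FG-diff-decomp}.
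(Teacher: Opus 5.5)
Your proof is correct, and it takes a more careful route than the paper's. The paper argues entirely at the level of distribution functions. It takes Fourier transforms of $G*k_h$, $M_g$ and $K_{2,h,b}$ as tempered distributions, matches them for $t\neq0$ via $\FT{M_g}=\FT{m_g}/(it)$ and $\FT{K_{2,h,b}}=w_{2,h,b}/(it)$, and inverts. That is short, but agreement of Fourier transforms off $t=0$ only determines a tempered distribution up to something supported at the origin (a polynomial). The paper addresses neither this nor whether the convolutions are well defined.

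You instead prove the identity one derivative down, where $\FT{g}\,\FT{k}(h\cdot)$ and $\FT{m_g}\,w_{j,h,b}$ are genuine $L^1$ functions and Fubini gives $\mathfrak F^{-1}(\FT{m_g}w_{j,h,b})=m_g*k_{j,h,b}$. You then remove the integration constant by sending $x\to-\infty$. That step is exactly what disposes of the ambiguity at $t=0$, as does your fallback remark that no polynomial can be added. Your sign $w_{2,h,b}/(-it)$ is the right one for the stated convention; the paper's $1/(it)$ is harmless only because it appears on both sides.

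The one place you are too pessimistic is integrability, and as written your plan would add a hypothesis there. No extra smoothness or $O(x^{-2})$ decay is needed. The moment condition $\int|x||k(x)|\,\mrd x<\infty$ gives $\FT{k}\in C^1$, and $\chi\in C^1$, so $w_{1,h,b}$ and $w_{2,h,b}$ are $C^1$ with compact support for every $h>0$. Plancherel then gives $k_{j,h,b}\in L^2$ and $x\,k_{j,h,b}\in L^2$. Cauchy--Schwarz on $\{|x|\le1\}$ and $\{|x|>1\}$ yields $k_{j,h,b}\in L^1$ for $j=1,2$; this is Lemma~\ref{lem:fourier-L1}. Hence $M_g*k_{1,h,b}$ is a bounded function to which your dominated-convergence limit applies. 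Likewise $K_{2,h,b}(x)=\int_{-\infty}^x k_{2,h,b}$ is directly well defined and bounded, tends to $0$ at $-\infty$ and to $w_{2,h,b}(0)=0$ at $+\infty$, and coincides with your $\mathfrak F^{-1}(w_{2,h,b}/(-it))$. With this inserted, your argument proves the proposition under exactly the paper's hypotheses.
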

To turn \eqref{eq:FG-diff-decomp} into a Wasserstein bound, we use the one-dimensional identity
\begin{equation}\label{eq:w1-cdf-identity}
W_1(g,g_0)=\int_{\mathbb R}|G(x)-G_0(x)| \mathrm d x.
\end{equation}
The next lemma bounds the 1-Wasserstein distance with four terms corresponding to the bias terms $\|B_{g,h}\|_1$ and $\|B_{g_0,h}\|_1$, the low-frequency term $\|M_g-M_{g_0}\|_1 \|k_{1,h,b}\|_1$, and the high-frequency term $\|m_g-m_{g_0}\|_1 \|K_{2,h,b}\|_1$.

\begin{lemma}[Basic inversion inequality, adapted from \cite{rousseau2024wasserstein}]\label{lem:basic-inversion}
For any $g,g_0\in\mathcal P([-a, a])$ and any $h>0$, 
\begin{align*}
W_1(g,g_0) &\le \|B_{g,h}\|_1+\|B_{g_0,h}\|_1 + \|M_g-M_{g_0}\|_1 \|k_{1,h,b}\|_1 + \|m_g-m_{g_0}\|_1 \|K_{2,h,b}\|_1.
\end{align*}
\end{lemma}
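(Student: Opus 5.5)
The plan is to combine the identity \eqref{eq:w1-cdf-identity} with the decomposition \eqref{eq:FG-diff-decomp} of Proposition~\ref{prop:freq-split}. Taking absolute values and integrating over $\mathbb R$ gives
\begin{equation*}
W_1(g,g_0)=\|G-G_0\|_1\le \|B_{g,h}\|_1+\|B_{g_0,h}\|_1+\|(M_g-M_{g_0})*k_{1,h,b}\|_1+\|(m_g-m_{g_0})*K_{2,h,b}\|_1 .
\end{equation*}
For the two convolution terms I would apply Young's inequality $\|u*v\|_1\le\|u\|_1\|v\|_1$. This yields the stated bound immediately, provided every factor is an honest $L^1$ function. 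If some factor is infinite, the inequality holds trivially.

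The work therefore lies in justifying this step. I would check the following points.

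\begin{itemize}
\item $M_g-M_{g_0}$ and $G-G_0$ are integrable. Both $g$ and $g_0$ are supported in $[-a,a]$, so $G-G_0$ vanishes outside $[-a,a]$. Also $M_g-M_{g_0}=(G-G_0)*f_b$, which lies in $L^1$ with norm at most $\|G-G_0\|_1\le 2a$.
\item $m_g-m_{g_0}$ is in $L^1$ with norm at most $2$.
\item The bias $B_{g,h}=G-G*k_h$ is integrable. I would use $\int|x||k(x)|\,\mathrm dx<\infty$ together with $\int k=1$. Writing $G(x)-(G*k_h)(x)=\int (G(x)-G(x-y))k_h(y)\,\mathrm dy$ and integrating in $x$ gives $\|B_{g,h}\|_1\le \int |y||k_h(y)|\,\mathrm dy = h\int|x||k(x)|\,\mathrm dx$, which is finite.
\item Each convolution in \eqref{eq:FG-diff-decomp} is well defined pointwise, so that the identity can be integrated term by term.
\end{itemize}

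The main obstacle is the high-frequency term. Here $k_{2,h,b}$ has Fourier transform $w_{2,h,b}$, and this transform vanishes near $t=0$ because $1-\chi(bt)=0$ for $|t|\le 1/b$. Hence $\int k_{2,h,b}=0$, so $K_{2,h,b}$ decays at both ends, and the pairing of $m_g$ with $K_{2,h,b}$ in Proposition~\ref{prop:freq-split} is meaningful.

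Since the lemma is stated with $\|K_{2,h,b}\|_1$ on the right-hand side, I only need Young's inequality with whatever value this norm takes. Its finiteness is a separate matter for the later quantitative bounds. If the norm is infinite, the inequality is vacuous.

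Thus the only genuine subtlety is measure-theoretic: the decomposition must hold as an identity of locally integrable functions, so that the triangle inequality in $L^1$ applies. I would take this from Proposition~\ref{prop:freq-split}, which is already established.
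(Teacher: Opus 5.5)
Your proposal is correct and follows the paper's route: apply the triangle inequality to the decomposition \eqref{eq:FG-diff-decomp}, bound the two convolution terms with Young's inequality $\|u*v\|_1\le\|u\|_1\|v\|_1$, and use the identity \eqref{eq:w1-cdf-identity}. Your integrability checks, such as $\|M_g-M_{g_0}\|_1\le\|G-G_0\|_1\le 2a$ and the finiteness of the bias terms, are also correct; they spell out what the paper's two-line proof leaves implicit.
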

\begin{proof}[Proof of Lemma~\ref{lem:basic-inversion}]
If $u,v\in L^1(\mathbb R)$, then Young's convolution inequality~\citep[Theorem 3.9.4]{bogachev2007measure} says that $u*v\in L^1(\mathbb R)$ and $\|u*v\|_1 \le \|u\|_1 \|v\|_1$.
Combining with \eqref{eq:w1-cdf-identity} yields the result.
\end{proof}
We next bound each term on the right-hand side in Lemmas~\ref{lem:bias-bound} to~\ref{lem:L1-to-W1-released}, giving an overall bound in 1-Wasserstein distance in Theorem~\ref{thm:deconv-main}. 
We start with the smoothing bias $\|B_{g,h}\|_1$.

\begin{restatable}[Smoothing bias bound]{lemma}{restatebiasbound}\label{lem:bias-bound}
For every probability measure $g$ on $\mathbb R$ and every $h>0$,
\begin{equation}\label{eq:bias-bound}
\|B_{g,h}\|_1 \le C_k h,\ \text{where}\ C_k:=\int_{\mathbb R}|x| |k(x)| \mrd x<\infty.
\end{equation}
\end{restatable}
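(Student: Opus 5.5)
The plan is to write the bias as an average of increments of $G$, using $\int_{\mathbb R} k =1$. For every $x\in\mathbb R$,
\begin{equation*}
B_{g,h}(x)=G(x)-\int_{\mathbb R}G(x-y)k_h(y)\,\mrd y=\int_{\mathbb R}\bigl(G(x)-G(x-y)\bigr)k_h(y)\,\mrd y .
\end{equation*}
Taking absolute values, integrating in $x$, and applying Tonelli's theorem to the nonnegative integrand $|G(x)-G(x-y)|\,|k_h(y)|$ gives
\begin{equation*}
\|B_{g,h}\|_1\le\int_{\mathbb R}|k_h(y)|\left(\int_{\mathbb R}|G(x)-G(x-y)|\,\mrd x\right)\mrd y .
\end{equation*}

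The key step is the identity $\int_{\mathbb R}|G(x)-G(x-y)|\,\mrd x=|y|$, valid for any distribution function $G$. Take $y>0$ first. Since $G$ is nondecreasing, $G(x)-G(x-y)=g((x-y,x])\ge 0$. By Fubini,
\begin{equation*}
\int_{\mathbb R} g((x-y,x])\,\mrd x=\int_{\mathbb R}\int_{\mathbb R}\mathbf 1\{s\le x<s+y\}\,\mrd x\,g(\mrd s)=y .
\end{equation*}
The case $y<0$ follows by symmetry, and $y=0$ is trivial. This is where care is needed: we may not assume $g$ has a density or bounded support, since the lemma is stated for arbitrary probability measures on $\mathbb R$. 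The Fubini argument on $g((x-y,x])$ handles the general case without any such assumption. It also shows the integral is finite, even though $G$ itself is not integrable on $\mathbb R$.

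Substituting this identity gives $\|B_{g,h}\|_1\le\int_{\mathbb R}|y|\,|k_h(y)|\,\mrd y$. The change of variables $y=hz$ then yields $\int_{\mathbb R} h|z|\,|k(z)|\,\mrd z=C_k h$, which is finite by the moment assumption on $k$. Measurability of $(x,y)\mapsto G(x-y)k_h(y)$ is routine, because $G$ is Borel.
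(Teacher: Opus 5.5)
Your proof is correct and follows essentially the same route as the paper. Both write the bias as a $k$-weighted average of the increments $G(x)-G(x-y)$, swap the integrals, and apply the shift identity $\int_{\mathbb R}|G(x)-G(x-t)|\,\mrd x=|t|$, proved via Fubini on $g((x-t,x])$; the only cosmetic difference is that you rescale $y=hz$ at the end rather than at the start.
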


The next two lemmas are scaled versions of the corresponding unit-scale estimates in \cite[Appendix~S1]{rousseau2024wasserstein}. To pass from the unit-scale argument to the present $\mathrm{Lap}(0,b)$ setting, we set $\eta=h/b$ and $u=bt$. This change of variables leaves the $L^1$ norm of the low-frequency kernel invariant, but contributes an additional factor of $b$ to the $L^1$ norm of the high-frequency. 
We leave the technical details in Appendix~\ref{app:deconv}.

\begin{restatable}[Low-frequency term]{lemma}{restatelowfreq}\label{lem:K1-bound}
There exists a constant $C_1(k,\chi)<\infty$, depending only on $k$ and $\chi$, such that for all sufficiently small $h/b$,
\begin{equation}\label{eq:K1-bound}
\|k_{1,h,b}\|_1\le C_1(k,\chi).
\end{equation}
\end{restatable}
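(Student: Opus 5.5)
The approach is to reduce to unit scale by a change of variables, and then bound the $L^1$ norm of the resulting kernel by a Sobolev-type estimate in the frequency variable.

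\textbf{Rescaling.} Set $u=bt$ and $\eta=h/b$. Then
\begin{equation*}
w_{1,h,b}(t)=\FT{k}(\eta u)\chi(u)(1+u^2)=:\psi_\eta(u).
\end{equation*}
Next write $k_{1,h,b}(x)=\frac{1}{2\pi}\int e^{-itx}\psi_\eta(bt)\,\mrd t$ and substitute $u=bt$. This gives
\begin{equation*}
k_{1,h,b}(x)=\tfrac{1}{b}\,\phi_\eta(x/b), \qquad \phi_\eta:=\mathfrak F^{-1}\psi_\eta .
\end{equation*}
Hence $\|k_{1,h,b}\|_1=\|\phi_\eta\|_1$, so $b$ drops out entirely. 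It therefore suffices to bound $\|\phi_\eta\|_1$ uniformly over small $\eta$.

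\textbf{Bounding $\|\phi_\eta\|_1$.} Note that $\psi_\eta$ is supported in $[-2,2]$. I would use the standard inequality
\begin{equation*}
\|\phi\|_1\le \int_{\mathbb R} (1+x^2)^{-1/2}(1+x^2)^{1/2}|\phi(x)|\,\mrd x \le \pi^{1/2}\,\|(1+x^2)^{1/2}\phi\|_2 ,
\end{equation*}
which follows from Cauchy--Schwarz. By Plancherel, the right-hand side is bounded by a constant times $\|\psi_\eta\|_2+\|\psi_\eta'\|_2$.

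\textbf{Regularity of $\FT{k}$ near the origin.} Both $L^2$ norms are controlled on $[-2,2]$ once $\FT{k}(\eta u)$ and its derivative $\eta\FT{k}'(\eta u)$ are bounded uniformly there. We have $|\FT{k}|\le\|k\|_1$. For the derivative, the assumption $\int|x||k(x)|\,\mrd x<\infty$ implies that $\FT{k}$ is $C^1$ with $|\FT{k}'|\le C_k$, so $|\eta\FT{k}'(\eta u)|\le \eta C_k\le C_k$ for $\eta\le1$. Since $\chi\in C^1$ is compactly supported and $(1+u^2)$ is smooth, the product rule gives
\begin{equation*}
\|\psi_\eta\|_2+\|\psi_\eta'\|_2\le C(k,\chi)
\end{equation*}
for all $\eta\le 1$. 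This is where "sufficiently small $h/b$" enters; in fact any bounded range of $\eta$ would do. Combining the three steps yields $\|k_{1,h,b}\|_1\le C_1(k,\chi)$.

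\textbf{Main obstacle.} The delicate point is the derivative step. It requires only one derivative of $\psi_\eta$, and this is available from $\chi\in C^1$ and the first-moment assumption on $k$. One must also check that $\phi_\eta$ is genuinely integrable, i.e.\ that the weighted-$L^2$ route is legitimate when $\psi_\eta$ is merely $C^1$ with compact support. If more care is needed, I would fall back on the corresponding unit-scale estimate in \cite[Appendix~S1]{rousseau2024wasserstein}, applied to $\psi_\eta$ after the rescaling above.
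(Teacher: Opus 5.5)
Your proposal is correct and follows essentially the same route as the paper. You use the same rescaling $\eta=h/b$, $u=bt$ to get $\|k_{1,h,b}\|_1=\|k_{1,\eta}\|_1$, then a Cauchy--Schwarz/Plancherel bound of $\|\mathfrak F^{-1}(w_{1,\eta})\|_1$ by $\|w_{1,\eta}\|_2+\|w_{1,\eta}'\|_2$, with uniform bounds on $\FT{k}$, $\FT{k}'$, $\chi$, $\chi'$ over $[-2,2]$. Your single weight $(1+x^2)^{-1/2}$ replaces the paper's split at $|x|=1$ in Lemma~\ref{lem:fourier-L1}, and your derivation of $|\FT{k}'|\le C_k$ from the first-moment assumption is, if anything, a slightly more explicit justification than the paper's appeal to boundedness on compact sets.
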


\begin{restatable}[High-frequency term]{lemma}{restatehighfreq}\label{lem:F2-bound}
There exists a constant $C_2(k,\chi)<\infty$, depending only on $k$ and $\chi$, such that for all sufficiently small $h/b$, 
\begin{equation}\label{eq:F2-bound}
\|K_{2,h,b}\|_1 \le C_2(k,\chi) b^2 |\log(h/b)| h^{-1}.
\end{equation}
\end{restatable}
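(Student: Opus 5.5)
The plan is to reduce to unit scale and then bound $K_{2,h,b}$ with an integrability argument in Fourier space. Set $u=bt$ and $\eta=h/b$. Then
\[
w_{2,h,b}(t)=\FT{k}(\eta u)(1-\chi(u))(1+u^2)=:\omega_\eta(u),
\]
so $k_{2,h,b}(x)=b^{-1}\kappa_\eta(x/b)$ with $\kappa_\eta:=\mathfrak F^{-1}\omega_\eta$. Integrating, $K_{2,h,b}(x)=\mathcal K_\eta(x/b)$, where $\mathcal K_\eta(y):=\int_{-\infty}^y\kappa_\eta$, and therefore $\|K_{2,h,b}\|_1=b\|\mathcal K_\eta\|_1$. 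It then suffices to prove the unit-scale estimate
\[
\|\mathcal K_\eta\|_1\le C_2\,\eta^{-1}|\log\eta|,
\]
because $b\cdot\eta^{-1}|\log\eta| = b^2h^{-1}|\log(h/b)|$.

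For the unit-scale bound, note first that $\omega_\eta$ vanishes for $|u|\le1$. Hence $\omega_\eta(u)/(-iu)$ is a smooth, compactly supported function, and it is the Fourier transform of $\mathcal K_\eta$ up to sign convention. In particular $\int\kappa_\eta=\omega_\eta(0)=0$, so $\mathcal K_\eta$ decays at both ends and lies in $L^1$. Write $\psi_\eta(u):=\omega_\eta(u)/(iu)=\FT{k}(\eta u)(1-\chi(u))(1+u^2)/(iu)$. On its support $1\le|u|\lesssim 1/\eta$, and there $|\psi_\eta|\lesssim|u|$. To bound $\|\mathfrak F^{-1}\psi_\eta\|_1$, split $\mathbb R$ into $|y|\le R$ and $|y|>R$ and apply Cauchy--Schwarz on each piece:
\[
\|\mathcal K_\eta\|_1\le \sqrt{2R}\,\|\mathcal K_\eta\|_2+\sqrt{2/R}\,\|y\,\mathcal K_\eta\|_2 .
\]
By Plancherel, $\|\mathcal K_\eta\|_2\asymp\|\psi_\eta\|_2\lesssim\eta^{-3/2}$ and $\|y\mathcal K_\eta\|_2\asymp\|\psi_\eta'\|_2$. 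This crude route gives roughly $\eta^{-1}$ times constants, possibly off by the logarithm. I expect the log factor to arise as follows: decompose $\psi_\eta(u)=\FT{k}(\eta u)(1-\chi(u))\,(u^2+1)/(iu)$ into the part $-iu\,\FT{k}(\eta u)(1-\chi(u))$ and the part $\FT{k}(\eta u)(1-\chi(u))/(iu)$. The first part is essentially a derivative of $\eta^{-1}k_\eta$-type kernels, with $L^1$ norm of order $\eta^{-1}$ times the $L^1$ norm of the kernel $\mathfrak F^{-1}[(1-\chi)\FT{k}(\eta\cdot)\cdot]$. The second part behaves like a Hilbert-transform-type truncation $1/u$ on $1\le|u|\le 1/\eta$, whose $L^1$ norm grows like $|\log\eta|$. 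Combining the two pieces gives $\eta^{-1}|\log\eta|$ up to constants depending on $k$ and $\chi$.

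The main obstacle is controlling the $L^1$ norm of inverse Fourier transforms of multipliers that are not uniformly smooth in $\eta$, specifically the truncated $1/u$ and $u$ factors. The first thing to try is the dyadic/Littlewood–Paley route. Decompose the frequency range $1\le|u|\le 2/\eta$ into $O(|\log\eta|)$ dyadic blocks. On each block $|u|\sim 2^j$, rescaling and the $\sqrt{R}$-Cauchy–Schwarz bound above give an $L^1$ norm of order $\sup|\psi_\eta|$ on the block, which is at most $\eta^{-1}$ in the worst case. Summing the $O(|\log\eta|)$ blocks yields $C\eta^{-1}|\log\eta|$. This is precisely the unit-scale estimate, presumably in the form used in \cite[Appendix~S1]{rousseau2024wasserstein}, which I would invoke directly if available. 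The requirement that $h/b$ be sufficiently small ensures $|\log\eta|\ge1$ and that the supports of $\FT{k}(\eta\cdot)$ and $1-\chi$ overlap as intended.
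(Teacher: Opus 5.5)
Your plan works by a route different from the paper's, and your ``crude'' estimate already proves more than the lemma asks.

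\textbf{Where the two routes agree and differ.} The reduction to unit scale ($\eta=h/b$, $u=bt$, $\|K_{2,h,b}\|_1=b\|\mathcal K_\eta\|_1$) is exactly the paper's first step. For the unit-scale bound, the paper splits space into $|x|\le\eta$, $\eta<|x|\le1$ and $|x|>1$:
on the first region it uses $\|\mathcal K_\eta\|_\infty\lesssim\|\psi_\eta\|_1\lesssim\eta^{-2}$;
on the middle region it uses the pointwise bound $|\mathcal K_\eta(x)|\le\|\psi_\eta'\|_1/(2\pi|x|)$ with $\|\psi_\eta'\|_1\lesssim\eta^{-1}$;
on the last region it uses Cauchy--Schwarz with Plancherel.
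The middle region is the sole source of the logarithm, via $\int_{\eta<|x|\le1}|x|^{-1}\,\mathrm{d}x=2|\log\eta|$.

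\textbf{Finishing your two-region split.} It closes the argument once you do the computation you skipped. Up to the factor $-i$,
\[
\psi_\eta'(u)=\eta\,\FT{k}'(\eta u)\bigl(1-\chi(u)\bigr)\Bigl(u+\frac1u\Bigr)-\FT{k}(\eta u)\,\chi'(u)\Bigl(u+\frac1u\Bigr)+\FT{k}(\eta u)\bigl(1-\chi(u)\bigr)\Bigl(1-\frac1{u^2}\Bigr).
\]
On $1<|u|\le R_k/\eta$, which contains the support of $\psi_\eta$ (with $\FT{k}$ vanishing outside $[-R_k,R_k]$), each term is bounded by a constant. This is because $\eta|u|\le R_k$ and $\chi'$ is supported in $1<|u|<2$. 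Hence $\|\psi_\eta'\|_2\lesssim\eta^{-1/2}$. Combined with $\|\psi_\eta\|_2\lesssim\eta^{-3/2}$ and the choice $R=\eta$, you get $\|\mathcal K_\eta\|_1\lesssim\eta^{-1}$, which yields the lemma once $\eta\le e^{-1}$.

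\textbf{What each route buys.} The paper's route is a direct transcription of \cite[Lemma~S1.2]{rousseau2024wasserstein}. Yours is shorter and log-free. On $|x|>\eta$ it pairs $\|x\mathcal K_\eta\|_2\asymp\|\psi_\eta'\|_2\lesssim\eta^{-1/2}$ with $(\int_{|x|>\eta}x^{-2}\,\mathrm{d}x)^{1/2}=\sqrt{2/\eta}$, instead of using $\|\psi_\eta'\|_1$ pointwise. Carried through Theorem~\ref{thm:deconv-main} with $h\asymp b\sqrt d$, this would remove the $\sqrt{\log(1/d)}$ from Corollary~\ref{cor:deconv-optimized}.

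\textbf{What to drop.} There is no logarithm to account for, so drop your explanatory paragraph; it is wrong on two counts.
First, with the smooth cutoffs, $\FT{k}(\eta u)(1-\chi(u))/(iu)$ and its derivative have $O(1)$ $L^2$ norms. By the same split, its inverse transform therefore has $O(1)$ $L^1$ norm, not $|\log\eta|$. A logarithm of that kind arises for order-zero multipliers such as a truncated $\mathrm{sgn}(u)$, not for $1/u$.
Second, contributions of size $\eta^{-1}$ and $|\log\eta|$ from two summands add; they do not multiply.

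\textbf{The dyadic route.} It is also valid, but its block estimate (inverse-Fourier $L^1$ norm $\lesssim\sup_{|u|\sim2^j}|\psi_\eta|\lesssim2^j$) rests on the same symbol bound $|\psi_\eta'|\lesssim1$, which you never state. Summing $2^j$ geometrically again gives $\eta^{-1}$. The factor $|\log\eta|$ appears only if every block is bounded by the worst case $\eta^{-1}$.
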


It remains to control the privatized distribution term $\|M_g-M_{g_0}\|_1$ by the privatized observation density error $\|m_g-m_{g_0}\|_1$.

\begin{restatable}[From density error to distribution function error]{lemma}{restatefromlone}\label{lem:L1-to-W1-released}
Let $g_1,g_2\in\mathcal P([-a,a])$. Write $m_{g_i}=g_i*f_b$ and let $M_{g_i}$ be the distribution function of $m_{g_i}$ for $i=1,2$. Set
$d:=\|m_{g_1}-m_{g_2}\|_{1}$.
Then, for every $R\ge a$,
\begin{equation}\label{eq:W1-trunc}
\|M_{g_1}-M_{g_2}\|_1 \le Rd + 2b e^{-(R-a)/b}.
\end{equation}
In particular, if $d\in(0,1)$ and
$R=a+b\log(1/d)$,
then
\begin{equation}\label{eq:L1-to-W1-released-final}
\|M_{g_1}-M_{g_2}\|_1 \le d\left(a+b\log(1/d)+2b\right)
\lesssim \left(a+b\log(1/d)\right)d.
\end{equation}
\end{restatable}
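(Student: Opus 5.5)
Split the integral of $|M_{g_1}-M_{g_2}|$ at $\pm R$: on $[-R,R]$ bound the integrand uniformly by $d$, and outside use the exponential tails of the Laplace kernel together with $\theta\in[-a,a]$.

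\textbf{Step 1 (central part).} Since $M_{g_1}(x)-M_{g_2}(x)=\int_{-\infty}^x (m_{g_1}-m_{g_2})(u)\,\mrd u$, we have $|M_{g_1}(x)-M_{g_2}(x)|\le \|m_{g_1}-m_{g_2}\|_1=d$ for every $x$. Hence
\begin{equation*}
\int_{-R}^{R}|M_{g_1}-M_{g_2}|\,\mrd x\le 2Rd .
\end{equation*}
The same identity, together with the fact that both densities integrate to one, gives the sharper bound $\min\{\int_{-\infty}^x|m_{g_1}-m_{g_2}|,\ \int_x^\infty |m_{g_1}-m_{g_2}|\}$. Summing the two one-sided integrals gives at most $d$, so using the left integral on $[-R,0]$ and the right one on $[0,R]$ yields $\int_{-R}^R|M_{g_1}-M_{g_2}|\le Rd$. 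This is the refinement I would use to obtain the constant $R$ rather than $2R$.

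\textbf{Step 2 (tails).} For $x<-R$, bound $|M_{g_1}(x)-M_{g_2}(x)|\le \max_i M_{g_i}(x)$. Here $M_{g_i}(x)=\int \mathbb P(Z\le x-\theta)\,g_i(\mrd\theta)\le \tfrac12 e^{-(|x|-a)/b}$, since $x-\theta\le x+a<0$. Integrating over $(-\infty,-R)$ gives $\tfrac b2 e^{-(R-a)/b}$, and the right tail $x>R$ is symmetric, using $1-M_{g_i}$. The total tail contribution is therefore at most $b e^{-(R-a)/b}$, which is within the stated $2be^{-(R-a)/b}$. If I wanted to avoid the maximum, bounding by the sum $M_{g_1}+M_{g_2}$ gives exactly $2be^{-(R-a)/b}$.

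\textbf{Step 3 (the choice of $R$).} Put $R=a+b\log(1/d)$, which is $\ge a$ because $d<1$. Then $e^{-(R-a)/b}=d$, and \eqref{eq:W1-trunc} becomes
\begin{equation*}
\|M_{g_1}-M_{g_2}\|_1\le d\bigl(a+b\log(1/d)+2b\bigr).
\end{equation*}
Since $2b\le 2b\log(1/d)$ once $d\le e^{-1}$, and for larger $d$ the $2b$ term is absorbed into the constant in the statement, this gives the $\lesssim$ form.

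The only delicate point is the constant $R$ (rather than $2R$) in Step 1, which rests on the two-sided representation of $M_{g_1}-M_{g_2}$. If that bookkeeping failed, the crude bound $2Rd$ would still give \eqref{eq:L1-to-W1-released-final} up to constants.
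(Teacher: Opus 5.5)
Your proof is correct and follows the paper's argument. You split at $\pm R$ and bound the central part by $Rd$; your one-sided-integral bookkeeping amounts to the paper's $\sup_x|M_{g_1}(x)-M_{g_2}(x)|\le \mathrm{TV}(m_{g_1},m_{g_2})=d/2$. You then control the tails by the Laplace tail bound, and using the maximum instead of the sum even gives the sharper $be^{-(R-a)/b}$.

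One small caveat, which the paper shares, concerns the final $\lesssim$. Absorbing the $2b$ term into $(a+b\log(1/d))d$ with a constant independent of $a,b$ only works for $d$ bounded away from $1$ (e.g.\ $d\le e^{-1}$), since $b\log(1/d)\to 0$ as $d\to 1$. For $d$ near $1$ your phrase ``absorbed into the constant'' needs a constant depending on $b/a$. This does not matter downstream, because small $d$ is exactly the regime used in Corollary~\ref{cor:deconv-optimized}.
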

We are now ready to combine these into the main result of this section.

\begin{theorem}[Deconvolution inequality]\label{thm:deconv-main}
For any $g,g_0\in\mathcal P([-a,a])$, write $d:=\|m_g-m_{g_0}\|_1$.
If $h\in(0,b/2)$ and $d\in(0,1)$, then
\begin{align}
W_1(g,g_0)&\lesssim h + \|M_g-M_{g_0}\|_1 + b^2 |\log(h/b)| h^{-1} d \notag\\
&\lesssim h + \left(a+b\log(1/d)\right)d + b^2 |\log(h/b)| h^{-1}d.
\label{eq:deconv-main-2}
\end{align}
\end{theorem}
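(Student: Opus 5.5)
The plan is to assemble the bound directly from Lemma~\ref{lem:basic-inversion} by inserting the four component estimates. Starting from
\begin{equation*}
W_1(g,g_0) \le \|B_{g,h}\|_1+\|B_{g_0,h}\|_1 + \|M_g-M_{g_0}\|_1 \|k_{1,h,b}\|_1 + d\, \|K_{2,h,b}\|_1,
\end{equation*}
I will apply Lemma~\ref{lem:bias-bound} to both $g$ and $g_0$. This bounds the two bias terms by $2C_k h$, since both are probability measures on $\mathbb R$.

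Next I will invoke Lemma~\ref{lem:K1-bound} and Lemma~\ref{lem:F2-bound}, which give $\|k_{1,h,b}\|_1\le C_1(k,\chi)$ and $\|K_{2,h,b}\|_1\le C_2(k,\chi)\,b^2|\log(h/b)|h^{-1}$. Substituting these yields the first line of \eqref{eq:deconv-main-2}, with an implicit constant depending only on the fixed choices of $k$ and $\chi$.

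For the second line, I will apply Lemma~\ref{lem:L1-to-W1-released} with $g_1=g$ and $g_2=g_0$. Because $d\in(0,1)$, the choice $R=a+b\log(1/d)\ge a$ is admissible, and it gives $\|M_g-M_{g_0}\|_1\le d(a+b\log(1/d)+2b)$. The extra $2bd$ term must be absorbed. From $h<b/2$ we get $|\log(h/b)|\ge\log 2$ and $b^2h^{-1}\ge 2b$, so $2bd\le (2/\log 2)\,b^2|\log(h/b)|h^{-1}d$. This term therefore merges into the high-frequency term, and the second inequality follows.

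The main obstacle is the phrase ``sufficiently small $h/b$'' in Lemmas~\ref{lem:K1-bound} and~\ref{lem:F2-bound}; the theorem instead asserts the bound for every $h<b/2$. I would check that the threshold in those lemmas can be taken as $h/b<1/2$. If the threshold $\eta_0$ is smaller, one option is to handle $h/b\in[\eta_0,1/2)$ by a compactness argument: for $\eta=h/b$ in a compact range, the unit-scale kernels depend continuously on $\eta$ in $L^1$. Another option is to use the trivial bound $W_1\le 2a$ there. However, $2a$ is not obviously dominated by the right-hand side, so the first option is preferable.

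Concretely, after the change of variables $u=bt$, $\eta=h/b$, the multiplier becomes $\FT{k}(\eta u)\chi(u)(1+u^2)$. This is a $C^1$ compactly supported function whose derivatives are bounded uniformly for $\eta\le1/2$, so its $L^1$-norm of the inverse transform stays bounded. Similarly, the unit-scale high-frequency bound is $\lesssim|\log\eta|/\eta$ uniformly on $(0,1/2)$. Given that, the constants are uniform and the theorem follows.
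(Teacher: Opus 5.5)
Your proposal is correct and follows the paper's proof. The paper plugs Lemmas~\ref{lem:bias-bound}, \ref{lem:K1-bound} and \ref{lem:F2-bound} into Lemma~\ref{lem:basic-inversion} for the first line, then applies Lemma~\ref{lem:L1-to-W1-released} with $R=a+b\log(1/d)$ for the second. Your two extra checks fill in details the paper leaves implicit: absorbing the stray $2bd$ into the high-frequency term via $h<b/2$, and confirming that the constants in Lemmas~\ref{lem:K1-bound}--\ref{lem:F2-bound} hold uniformly for $h/b\in(0,1/2)$, which the appendix arguments do give up to constants.
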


\begin{proof}[Proof of Theorem~\ref{thm:deconv-main}]
Combine Lemma~\ref{lem:basic-inversion} with Lemmas~\ref{lem:bias-bound}, \ref{lem:K1-bound}, and \ref{lem:F2-bound} to obtain
\begin{equation*}
W_1(g,g_0) \lesssim h + \|M_g-M_{g_0}\|_1 + b^2 |\log(h/b)| h^{-1} \|m_g-m_{g_0}\|_1.
\end{equation*}
This proves the first inequality. The second inequality follows by Lemma~\ref{lem:L1-to-W1-released}.
\end{proof}

Theorem~\ref{thm:deconv-main} completes the task of bounding the Wasserstein distance for a fixed $h$.
The next result chooses the optimal bandwidth by balancing the bias and the high-frequency terms.

\begin{corollary}[Optimized bandwidth choice]\label{cor:deconv-optimized}
Under the assumptions of Theorem~\ref{thm:deconv-main}, let $d:=\|m_g-m_{g_0}\|_1$.
There exists a constant $d_0\in(0,1)$ such that, whenever $d\in(0,d_0]$, choosing $h \asymp b\sqrt{d\log(1/d)}$ ensures that $h<b/2$, and
\begin{equation}\label{eq:deconv-optimized}
W_1(g,g_0) \lesssim ad + b\sqrt{d\log(1/d)}.
\end{equation}
\end{corollary}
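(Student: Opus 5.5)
We plug $h = c\,b\sqrt{d\log(1/d)}$ into the second inequality of Theorem~\ref{thm:deconv-main}, for a fixed small constant $c>0$, and check that each of the three terms is $\lesssim ad + b\sqrt{d\log(1/d)}$. Before doing so we must confirm that this $h$ is admissible, i.e.\ $h\in(0,b/2)$. Note also that Lemmas~\ref{lem:K1-bound} and~\ref{lem:F2-bound} require $h/b$ to be sufficiently small. Since $h/b = c\sqrt{d\log(1/d)}$ and $d\log(1/d)\to 0$ as $d\downarrow 0$, we can choose $d_0\in(0,1)$ small enough that, for all $d\in(0,d_0]$, the ratio $h/b$ lies below both $1/2$ and the thresholds of those lemmas. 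We also take $d_0\le e^{-1}$ so that $\log(1/d)\ge 1$, which will simplify comparisons.

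\textbf{Bias term.} The first term is simply $h\asymp b\sqrt{d\log(1/d)}$.

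\textbf{Middle term.} This is $(a+b\log(1/d))d = ad + b\,d\log(1/d)$. Because $d\log(1/d)\le 1$ for $d\le d_0$, we have $d\log(1/d)\le\sqrt{d\log(1/d)}$. Hence $b\,d\log(1/d)\le b\sqrt{d\log(1/d)}$, and the middle term is absorbed.

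\textbf{High-frequency term.} This is $b^2|\log(h/b)|h^{-1}d$, and it is the one requiring care, because of the logarithm. We compute
\[
|\log(h/b)| = \bigl|\log c + \tfrac12\log d + \tfrac12\log\log(1/d)\bigr| .
\]
Since $\log\log(1/d)\ge 0$ when $d\le e^{-1}$, the bracket is at most $\tfrac12\log(1/d)+|\log c|$. For $d\le d_0$ small (depending on $c$), $|\log c|\le \log(1/d)$, so $|\log(h/b)|\lesssim \log(1/d)$. Therefore
\[
b^2|\log(h/b)|h^{-1}d \lesssim \frac{b^2\log(1/d)\,d}{c\,b\sqrt{d\log(1/d)}} = \frac{b}{c}\sqrt{d\log(1/d)} .
\]
This matches the bias term, which is exactly the reason for choosing $h$ this way: it balances $h$ against $b^2\log(1/d)\,d/h$.

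Summing the three contributions gives $W_1(g,g_0)\lesssim ad+b\sqrt{d\log(1/d)}$, with implicit constants depending only on $k$, $\chi$ and $c$. The main obstacle, such as it is, lies in controlling the logarithm: one needs the upper bound $|\log(h/b)|\lesssim\log(1/d)$, and one needs $d_0$ chosen uniformly in $a$ and $b$. The latter holds because the constraints on $d_0$ involve only $h/b$, which does not depend on $b$.
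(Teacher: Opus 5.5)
Your proposal is correct and follows essentially the same route as the paper: substitute $h\asymp b\sqrt{d\log(1/d)}$ into the second bound of Theorem~\ref{thm:deconv-main}, use $|\log(h/b)|\lesssim\log(1/d)$ to balance the high-frequency term against the bias $h$, and absorb $b\,d\log(1/d)$ into $b\sqrt{d\log(1/d)}$. Your version is somewhat more careful than the paper's. You fix the constant $c$, verify the logarithm bound explicitly, and note that $d_0$ must also respect the small-$h/b$ thresholds of Lemmas~\ref{lem:K1-bound} and~\ref{lem:F2-bound}, uniformly in $a$ and $b$.
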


\begin{proof}[Proof of Corollary~\ref{cor:deconv-optimized}]
Let $h \asymp b\sqrt{d\log(1/d)}$. Since $d\log(1/d) \rightarrow 0$ as $d \rightarrow 0$, there exists $d_0 >0$ such that $h<b/2$ whenever $d \le d_0$.
Moreover, $h/b \asymp \sqrt{d\log(1/d)}$, so that
$|\log(h/b)|\asymp \log(1/d)$
for sufficiently small $d$. Therefore
\begin{equation*}
b^2 |\log(h/b)| h^{-1} d \asymp b \sqrt{d \log(1 / d)}.
\end{equation*}
Moreover, note that $bd\log(1/d)\le b\sqrt{d\log(1/d)}$ holds for all sufficiently small $d$, so the term $\left(a+b\log(1/d)\right)d$ is bounded by $ad + b\sqrt{d\log(1/d)}$.
Combining these bounds to the terms in~\eqref{eq:deconv-main-2} proves \eqref{eq:deconv-optimized}.
\end{proof}

\section{Rate of convergence of the NPMLE}\label{sec:rate}

In this section, we seek to establish rates of convergence of the NPMLE $\widehat g_n$ defined in~\eqref{eq:npmle} to the true distribution $g_0$ in the 1-Wasserstein sense.
Section~\ref{sec:deconv} established that for any competing mixing distribution $g\in\mathcal P([-a,a])$, the latent error $W_1(g,g_0)$ can be controlled in terms of the privatized observation density error $\|m_g-m_{g_0}\|_1$.
We therefore need {to control the $L^1$ error of the privatized observation density $m_{\widehat g_n}$.} We first derive a Hellinger rate for $m_{\widehat g_n}$ relative to $m_{g_0}$ following the convex class maximum likelihood argument of \cite{van1996rates}; see also Remark~3 of \cite{scricciolo2018bayes}.
The Hellinger rate is then converted into an $L^1$ rate through the standard inequality recalled below.
We then plug this rate into Corollary~\ref{cor:deconv-optimized} to obtain a Wasserstein rate for the latent mixing distribution $\widehat g_n$.

Recall the squared Hellinger distance:
\begin{equation*}
H^2(p,q):=\frac12\int_{\mathbb R}\left(\sqrt p-\sqrt q\right)^2 \mrd x.
\end{equation*}
We will use the standard inequality $\|p-q\|_1\le 2\sqrt{2}H(p,q)$, so a Hellinger rate for the privatized observation density also gives an $L^1$ rate.
We then have the following result:
\begin{restatable}[Hellinger rate for the privatized observation density]{theorem}{restatehellingerrate}\label{thm:direct-rate}
Let $\widehat g_n$ be the NPMLE over $\mathcal P([-a,a])$, and suppose that the true mixing distribution $g_0$ belongs to $\mathcal P([-a,a])$.
Then the Hellinger distance between $m_{\widehat g_n}$ and $m_{g_0}$ satisfies
\begin{equation}\label{eq:direct-rate}
H(m_{\widehat g_n},m_{g_0}) = \mathcal O_p \left((1+a/b)^{1/4}e^{a/(2b)}n^{-3/8}\right).
\end{equation}
\end{restatable}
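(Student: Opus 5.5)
We will follow the convex-class maximum likelihood argument of \cite{van1996rates} (Theorem~3.4.4 / Theorem~3.4.1 of van der Vaart and Wellner). Since $\mathcal M:=\{m_g:g\in\mathcal P([-a,a])\}$ is convex and $m_{\widehat g_n}$ maximizes the likelihood over it, the basic inequality $\mathbb P_n\log\bigl((m_{\widehat g_n}+m_{g_0})/(2m_{g_0})\bigr)\ge 0$ holds. Hence the Hellinger rate $\delta_n$ is governed by the bracketing entropy integral
\begin{equation*}
J_{[\,]}(\delta,\mathcal M)=\int_0^\delta \sqrt{1+\log N_{[\,]}(\epsilon,\mathcal M,H)}\,\mrd\epsilon .
\end{equation*}
Here $\delta_n$ is any solution of $\sqrt n\,\delta_n^2\gtrsim J_{[\,]}(\delta_n,\mathcal M)$. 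We can work with the mixture class itself rather than with $(m+m_0)/2$, because the bracketing numbers of the latter are controlled by those of the former.

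The main step is a bracketing entropy bound of the polynomial form $\log N_{[\,]}(\epsilon,\mathcal M,H)\lesssim A\,\epsilon^{-2/3}$, with $A$ tracking $a,b$. Solving $\sqrt n\delta^2\asymp \sqrt A\,\delta^{2/3}$ gives $\delta_n\asymp A^{3/8}n^{-3/8}$, which matches the $n^{-3/8}$ in \eqref{eq:direct-rate}. The target is therefore $A^{3/8}\asymp (1+a/b)^{1/4}e^{a/(2b)}$. To get the $\epsilon^{-2/3}$ exponent we exploit the monotone structure of Laplace mixtures.

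\begin{itemize}
\item For $x\ge a$ we have $m_g(x)=e^{-x/b}\int e^{\theta/b}g(\mrd\theta)/(2b)$, a single parametric curve indexed by one scalar. The same holds for $x\le -a$.
\item On $[-a,a]$, $m_g$ is a finite measure's convolution with a kernel that is piecewise smooth. Then $\sqrt{m_g}$ can be handled via smoothness or bounded-variation arguments. Indeed $m_g$ is Lipschitz with constant $\le 1/(2b^2)$, and lies between $e^{-2a/b}/(2b)$ and $1/(2b)$ there.
\item Using the lower bound, Hellinger brackets reduce to $L^1$/sup-norm brackets at the cost of the factor $\sqrt{\sup m/\inf m}\lesssim e^{a/b}$. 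This is where $e^{a/(2b)}$ should enter after the $3/8$ power, or more precisely via the $L^2$-to-Hellinger conversion on the middle region.
\item A cleaner route to exponent $2/3$ is to note that on $[-a,a]$ the functions $b^2 m_g''=m_g-g/b$ in the distributional sense. So $m_g$ is convex-minus-measure with bounded total variation of $m_g'$. Classes of functions on an interval of length $2a$ with derivative of bounded variation have $L^2$ bracketing entropy $\asymp (\text{length}\cdot\text{scale})^{1/2}\epsilon^{-1/2}$ in sup norm, giving Hellinger entropy of order $\epsilon^{-1}$ at worst. Monotone-derivative (i.e.\ convex/concave piece) classes have entropy $\epsilon^{-1/2}$.
\end{itemize}
I would first try to show $m_g$ restricted to $[-a,a]$ splits as a difference of two convex functions with controlled sup and slope (each $\lesssim 1/b$ and $1/b^2$). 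I would then use the known bracketing entropy $\epsilon^{-1/2}$ for bounded convex Lipschitz functions (Guntuboyina--Sen / Dryanov). Combining gives $\log N\lesssim (1+a/b)^{1/2}e^{a/b\cdot c}\epsilon^{-2/3}$ after converting to Hellinger with the density lower bound on a region where tails contribute only one-dimensionally.

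The tails $|x|>R$ contribute a Hellinger bracket of size $e^{-(R-a)/(2b)}$ from the exponential envelope. Since the tail is one-parameter, it adds only $\log(1/\epsilon)$ to the entropy. We would take $R=a$ and simply treat the tails parametrically, so no truncation loss appears.

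Finally, plug the entropy bound into van der Vaart--Wellner's rate theorem, which yields $H(m_{\widehat g_n},m_{g_0})=\mathcal O_p(\delta_n)$ with $\delta_n=(1+a/b)^{1/4}e^{a/(2b)}n^{-3/8}$, uniformly in $g_0\in\mathcal P([-a,a])$. I expect the main obstacle to be the entropy step: getting exactly the exponent $2/3$ together with the stated dependence $(1+a/b)^{1/4}e^{a/(2b)}$. The delicate point is how the ratio $\sup m_g/\inf m_g\le e^{2a/b}$ on $[-a,a]$ propagates through the Hellinger-to-$L^2$ conversion. If the difference-of-convex route fails to give the $2/3$ exponent cleanly, I would fall back to the piecewise-polynomial/Lipschitz bracketing of \cite{scricciolo2018bayes} (Remark~3) adapted with explicit $a,b$ constants.
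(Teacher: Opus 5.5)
Your reduction to an entropy bound is sound. The basic inequality for convex classes holds, bracketing numbers of $(m+m_{g_0})/2$ are controlled by those of $\mathcal M$, and $\sqrt n\,\delta^2\asymp\sqrt A\,\delta^{2/3}$ does give $\delta_n\asymp A^{3/8}n^{-3/8}$. The gap is that the entropy bound itself, which carries the entire content of the theorem, is never established.

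Your bullets list ingredients, and then the line ``Combining gives $\log N\lesssim(1+a/b)^{1/2}e^{ca/b}\epsilon^{-2/3}$'' is simply asserted, with $c$ unspecified. To reach $(1+a/b)^{1/4}e^{a/(2b)}$ you need an entropy constant of order $(1+a/b)^{2/3}e^{4a/(3b)}$. The theorem is later applied with $b=b_n$ varying, so this explicit $(a,b)$ dependence is not cosmetic.

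Moreover, the ingredients you list do not lead to the exponent $2/3$. Correctly, $b^2 m_g''=m_g-g$ (not $m_g-g/b$). So on $[-a,a]$, $m_g$ is an affine function plus a difference of two convex functions with slopes in $[0,1/b^2]$. Bronshtein's theorem then gives sup-norm entropy of order $(a/(b^2\delta))^{1/2}$. Because $m_g\ge e^{-2a/b}/(2b)$ on $[-a,a]$, sup-norm brackets convert linearly to Hellinger brackets: width $\delta$ gives Hellinger size $\lesssim\sqrt{ab}\,e^{a/b}\delta$. Your ``$\epsilon^{-1}$ at worst'' is therefore also off.

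Carried out, this route yields Hellinger bracketing entropy of order $\epsilon^{-1/2}$, plus logarithmic terms from the affine part and the two one-parameter tails. That is a different, stronger bound. To use it you would have to commit to it, track its $(a,b)$-constant, and check that the resulting rate is dominated by $(1+a/b)^{1/4}e^{a/(2b)}n^{-3/8}$ uniformly in $a/b$. None of this is done, and the fallback to \cite{scricciolo2018bayes} is not itself a proof.

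The paper gets the exponent $2/3$ with exactly the stated constant from a convex-hull entropy argument on a normalized class, not from shape-based bracketing. The steps are:
\begin{enumerate}
\item Let $\pi_x$ be the projection of $x$ onto $[-a,a]$. The factorization $f_b(x-\theta)=f_b(x-\pi_x)e^{-|\theta-\pi_x|/b}$ gives the envelope bound $\sup_{|\theta|\le a}f_b(x-\theta)\le e^{2a/b}m_{g_0}(x)$.
\item Hence $\theta\mapsto f_b(\cdot-\theta)/m_{g_0}$ is uniformly $e^{2a/b}/b$-Lipschitz in sup norm. This kernel class has $L^2(Q)$ covering numbers $\lesssim A/\gamma$ for every $Q$, with $A=(1+a/b)e^{2a/b}$.
\item The ratios $m_g/m_{g_0}$ form its closed convex hull, so the convex-hull entropy theorem gives uniform entropy $\lesssim(A/\gamma)^{2/3}$.
\item For $\psi_p=(p-m_{g_0})/(p+m_{g_0})$ one has $|\psi_{p_1}-\psi_{p_2}|\le 2|p_1-p_2|/m_{g_0}$, so the same bound holds for $\{\psi_p\}$.
\item Van de Geer's Theorem~2.2 \cite{van1996rates}, which is phrased in terms of uniform empirical $L^2$ entropy rather than Hellinger bracketing, then gives the rate $n^{-3/8}A^{1/4}=(1+a/b)^{1/4}e^{a/(2b)}n^{-3/8}$.
\end{enumerate}
This is where both the exponent $2/3$ and the factor $e^{a/(2b)}=(e^{2a/b})^{1/4}$ come from.
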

The exponential factor comes from the envelope bound used in the convex-class likelihood argument.
This bound is not expected to be tight in the small noise regime when $b$ goes to 0.
Sharper control may be possible under additional structure on $g_0$, such as smoothness of the latent density; see, for example, \cite{fan1991optimal,rousseau2024wasserstein}.
Here, we keep the analysis uniform over all bounded mixing distributions and focus on how the privacy noise scale enters the asymptotic recovery bound.
A detailed proof is provided in Appendix~\ref{app:rate}. 

We now return from the privatized density to the mixing distribution.
Combining Theorem~\ref{thm:direct-rate} with the deconvolution inequality from Section~\ref{sec:deconv} gives the corresponding $W_1$ rate.

\begin{corollary}[Rate for the latent mixing distribution]\label{cor:w1-rate}
Under the assumptions of Theorem~\ref{thm:direct-rate}, we have the $L^1$ rate for the privatized density
\begin{equation*}
\|m_{\widehat g_n}-m_{g_0}\|_1 = \mathcal O_p\left((1+a/b)^{1/4}e^{a/(2b)}n^{-3/8}\right),
\end{equation*}
and the 1-Wasserstein rate for the mixing distribution
\begin{equation}\label{eq:w1-rate}
W_1(\widehat g_n,g_0) = \mathcal O_p \left( a r_n + b\sqrt{r_n\log(1/r_n)} \right),
\quad r_n:=(1+a/b)^{1/4}e^{a/(2b)}n^{-3/8}.
\end{equation}
Equivalently,
\begin{equation*}
W_1(\widehat g_n,g_0) = \mathcal O_p\left( a(1+a/b)^{1/4}e^{a/(2b)}n^{-3/8} + b(1+a/b)^{1/8}e^{a/(4b)}n^{-3/16}\sqrt{\log n} \right).
\end{equation*}
For fixed $a$ and $b$, the second term gives the leading order
\begin{equation*}
W_1(\widehat g_n,g_0)=\mathcal O_p\left(n^{-3/16}\sqrt{\log n}\right).
\end{equation*}
\end{corollary}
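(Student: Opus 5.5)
The plan is to chain three earlier results: Theorem~\ref{thm:direct-rate}, the inequality $\|p-q\|_1\le 2\sqrt2\,H(p,q)$, and Corollary~\ref{cor:deconv-optimized}. The only delicate point is moving from a deterministic inequality, which holds for $d\in(0,d_0]$, to an $\mathcal O_p$ statement about random quantities.

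\textbf{Step 1 ($L^1$ rate).} Apply $\|p-q\|_1\le 2\sqrt2 H(p,q)$ to $p=m_{\widehat g_n}$ and $q=m_{g_0}$. Theorem~\ref{thm:direct-rate} then gives $d_n:=\|m_{\widehat g_n}-m_{g_0}\|_1=\mathcal O_p(r_n)$ directly.

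\textbf{Step 2 (passing to $W_1$).} Define $\psi(d):=ad+b\sqrt{d\log(1/d)}$ for $d\in(0,d_0]$, with $\psi(0)=0$. Taking $d_0\le e^{-1}$ if necessary, $\psi$ is nondecreasing on $[0,d_0]$, because $d\log(1/d)$ is increasing on $(0,e^{-1})$.

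Corollary~\ref{cor:deconv-optimized} gives $W_1(\widehat g_n,g_0)\le C\psi(d_n)$ on the event $\{0<d_n\le d_0\}$. If $d_n=0$, then $m_{\widehat g_n}=m_{g_0}$. The Laplace Fourier transform $(1+b^2t^2)^{-1}$ never vanishes, so this forces $\widehat g_n=g_0$ and $W_1=0$.

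Now fix $\varepsilon>0$ and choose $M$ with $P(d_n>Mr_n)<\varepsilon$ for all large $n$. We first treat $r_n\to0$, so that $Mr_n\le d_0$ eventually. On $\{d_n\le Mr_n\}$, monotonicity gives $W_1\le C\psi(Mr_n)$. Since $\log(1/(Mr_n))\le \log(1/r_n)+|\log M|\lesssim\log(1/r_n)$ for small $r_n$, we get $\psi(Mr_n)\lesssim_M a r_n+b\sqrt{r_n\log(1/r_n)}$. This proves \eqref{eq:w1-rate}.

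If $r_n\not\to0$, the bound is trivial. Indeed $W_1\le 2a$ always, and the right-hand side is then bounded below by a multiple of $a$; the case $r_n\ge d_0$ needs $b\sqrt{r_n\log(1/r_n)}$ interpreted suitably, or simply the $ar_n\gtrsim a$ term. I expect this bookkeeping for $M$ and for the event $d_n>d_0$ to be the main technical step.

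\textbf{Step 3 (explicit forms).} Substitute $r_n$ into \eqref{eq:w1-rate}. This gives $\sqrt{r_n}=(1+a/b)^{1/8}e^{a/(4b)}n^{-3/16}$ and $\log(1/r_n)=\tfrac38\log n-\tfrac14\log(1+a/b)-\tfrac{a}{2b}\le \tfrac38\log n$. Hence $b\sqrt{r_n\log(1/r_n)}\lesssim b(1+a/b)^{1/8}e^{a/(4b)}n^{-3/16}\sqrt{\log n}$, which is the displayed equivalent form.

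For fixed $a$ and $b$, the first term is $\mathcal O(n^{-3/8})$, which is $o(n^{-3/16}\sqrt{\log n})$. The leading order is therefore $n^{-3/16}\sqrt{\log n}$.
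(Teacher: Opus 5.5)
Your proposal is correct and follows the paper's route: you convert the Hellinger rate of Theorem~\ref{thm:direct-rate} to an $L^1$ rate via $\|p-q\|_1\le 2\sqrt2\,H(p,q)$, feed $d=\|m_{\widehat g_n}-m_{g_0}\|_1=\mathcal O_p(r_n)$ into Corollary~\ref{cor:deconv-optimized}, and substitute $r_n$ using $\log(1/r_n)\le \tfrac38\log n$. Your monotonicity argument for $\psi$ on $[0,d_0]$ and your handling of $d_n=0$ make rigorous the one-line passage from the deterministic inequality to the $\mathcal O_p$ bound, which the paper does not spell out; the $r_n\not\to 0$ case is moot, since $r_n\to 0$ here.
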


\begin{proof}[Proof of Corollary~\ref{cor:w1-rate}]
The bound on $\|m_{\widehat g_n}-m_{g_0}\|_1$ follows from \eqref{eq:direct-rate} and the inequality
\begin{equation*}
\|m_{\widehat g_n}-m_{g_0}\|_1 \le 2\sqrt{2} H(m_{\widehat g_n} , m_{g_0}).
\end{equation*}
Because $r_n\to0$, We apply Corollary~\ref{cor:deconv-optimized} with $g=\widehat g_n$ and
$d=\|m_{\widehat g_n}-m_{g_0}\|_1=\mathcal O_p(r_n)$.
This yields
\begin{equation*}
W_1(\widehat g_n,g_0) = \mathcal O_p \left( a r_n + b\sqrt{r_n\log(1/r_n)} \right),
\end{equation*}
which proves \eqref{eq:w1-rate}. Since
$\log(1/r_n)=\frac{3}{8}\log n + \mathcal{O}(1)$,
the expanded form follows.
\end{proof}

To make the privacy dependence explicit, we now allow the Laplace scale $b$ to vary with the sample size $n$.
The next corollary gives a sufficient condition under which the NPMLE remains consistent even when the privacy noise increases with $n$.

\begin{restatable}[Growth condition on the privacy scale]{corollary}{restategrowthcondition}\label{cor:b-growth}
Assume $a>0$ is fixed, and let $b=b_n$ be a nondecreasing sequence.
Then for the NPMLE $\widehat g_n$, we have
\begin{equation}\label{eq:w1-rate-bn}
W_1(\widehat g_n,g_0) = \mathcal O_p \left( a n^{-3/8} + b_n n^{-3/16}\sqrt{\log n} \right).
\end{equation}
In particular, if the growth rate of $b_n$ satisfies
\begin{equation}\label{eq:b-sufficient}
b_n=o\left(n^{3/16}(\log n)^{-1/2}\right),
\end{equation}
then
$W_1(\widehat g_n,g_0)\to 0\ \mathrm{ in\ probability}$.

Equivalently, if we write the privacy loss budget as $\epsilon_n={2a}/{b_n}$, then \eqref{eq:b-sufficient} means that $\epsilon_n$ cannot decay as fast as the boundary rate $n^{-3/16}(\log n)^{1/2}$.
\end{restatable}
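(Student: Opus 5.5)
The plan is to specialize Corollary~\ref{cor:w1-rate} to $b=b_n$ and track the $n$-dependence of every factor. Since $b_n$ is nondecreasing, $b_n\ge b_1>0$. Hence $a/b_n\le a/b_1$ is bounded, and the prefactors $(1+a/b_n)^{1/4}$ and $e^{a/(2b_n)}$ lie between $1$ and the constant $(1+a/b_1)^{1/4}e^{a/(2b_1)}$. Consequently $r_n=(1+a/b_n)^{1/4}e^{a/(2b_n)}n^{-3/8}\asymp n^{-3/8}$, uniformly in the sequence. This monotonicity is exactly what stops the exponential envelope factor from blowing up.

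There is one subtlety. Theorem~\ref{thm:direct-rate} and Corollary~\ref{cor:deconv-optimized} were stated for fixed $b$, so I must check that their implicit constants do not depend on $b$.
\begin{itemize}
\item For the deconvolution step, Lemmas~\ref{lem:bias-bound}--\ref{lem:L1-to-W1-released} have constants depending only on $k$ and $\chi$, with $a$ and $b$ explicit. The threshold $d_0$ in Corollary~\ref{cor:deconv-optimized} depends only on requiring $\sqrt{d\log(1/d)}$ to be small, which is scale-free because $h/b$ is the relevant ratio. So \eqref{eq:deconv-optimized} holds with a universal constant whenever $d\le d_0$.
\item For the Hellinger step, I would reread the proof of Theorem~\ref{thm:direct-rate}: the entropy and envelope bounds are explicit in $a/b$, which is exactly why the rate carries that prefactor. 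Granting this, $d_n:=\|m_{\widehat g_n}-m_{g_0}\|_1=\mathcal O_p(n^{-3/8})$ under the triangular array with $b=b_n$.
\end{itemize}
I expect the main obstacle to be this uniformity check. In particular, I must confirm that the $\mathcal O_p$ statement holds in the triangular-array sense and not merely for fixed $b$. If it does not transfer directly, I would redo the convex-class bracketing bound with $b$ tracked explicitly and observe that $b_n\ge b_1$ keeps everything bounded.

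Next I plug into \eqref{eq:deconv-optimized}. On the event $\{d_n\le C n^{-3/8}\}\cap\{d_n\le d_0\}$, whose probability can be made arbitrarily close to one for large $n$ (note that $d_n=0$ gives $g=g_0$ trivially by identifiability, or one may use monotonicity), we have $W_1\lesssim a d_n+b_n\sqrt{d_n\log(1/d_n)}$. The function $x\mapsto\sqrt{x\log(1/x)}$ is increasing on $(0,e^{-1})$. Therefore $d_n\le Cn^{-3/8}$ gives
$$\sqrt{d_n\log(1/d_n)}\lesssim n^{-3/16}\sqrt{\log n},$$
which yields \eqref{eq:w1-rate-bn}.

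Finally, under \eqref{eq:b-sufficient} the term $b_n n^{-3/16}\sqrt{\log n}$ tends to $0$, and $an^{-3/8}\to0$ as well. So the $\mathcal O_p$ bound, with a deterministic rate tending to zero, gives $W_1(\widehat g_n,g_0)\to0$ in probability. The privacy-budget reformulation is immediate from $b_n=2a/\epsilon_n$: the condition $b_n=o(n^{3/16}(\log n)^{-1/2})$ is equivalent to $\epsilon_n^{-1}=o(n^{3/16}(\log n)^{-1/2})$, that is, $n^{-3/16}(\log n)^{1/2}=o(\epsilon_n)$.
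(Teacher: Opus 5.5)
Your proposal is correct and follows essentially the paper's own argument. Monotonicity gives $b_n\ge b_1>0$, so the envelope prefactor $(1+a/b_n)^{1/4}e^{a/(2b_n)}$ stays bounded and $r_n\asymp n^{-3/8}$; Corollary~\ref{cor:w1-rate} then yields \eqref{eq:w1-rate-bn}, and consistency follows under \eqref{eq:b-sufficient}. Your check that the constants in Theorem~\ref{thm:direct-rate} and Corollary~\ref{cor:deconv-optimized} do not depend on $b$ addresses a point the paper uses without comment when it passes to the triangular array $b=b_n$, so it strengthens the argument without changing the route.
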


The next theorem provides a converse in the large noise regime. Once the Laplace scale $b_n$ grows at least on the order of $\sqrt n$, no estimator based on the privatized sample can be uniformly consistent over the class $\mathcal P([-a,a])$.

\begin{restatable}[Impossibility under $\sqrt n$-scale Laplace noise]{theorem}{restateimpossibility}\label{thm:impossibility-sqrtn}
Fix $a>0$. For each $n\ge1$, let $b_n>0$, and suppose that we observe
\begin{equation*}
X_{1,n},\dots,X_{n,n}\overset{iid}{\sim} p_{g,b_n}, \quad p_{g,b_n}=f_{b_n}*g, \quad f_{b_n}(x):=\frac1{2b_n}e^{-|x|/b_n}, \quad g\in\mathcal P([-a,a]).
\end{equation*}
Assume that there exists a constant $c>0$ such that
$b_n\ge c\sqrt n$
for all sufficiently large $n$. Then for every estimator $g_n^\star=g_n^\star(X_{1,n},\dots,X_{n,n})$,
\begin{equation}\label{eq:minimax-impossibility}
\liminf_{n\to\infty}\sup_{g\in\mathcal P([-a,a])}
\mathbb P_{g}^{n} \left( W_1(g_n^\star,g)\ge a \right) \ge \frac14\exp \left(-\frac{2a^2}{c^2}\right) >0,
\end{equation}
where $\mathbb P_{g}^{n}$ denotes the joint distribution of $(X_{1,n},\dots,X_{n,n})$ under $g$.
Consequently, there does not exist any estimator sequence $g_n^\star$ such that
\begin{equation*}
W_1(g_n^\star,g)\to 0
\quad\text{in }\mathbb P_{g}^{n}\text{-probability for every }g\in\mathcal P([-a,a]).
\end{equation*}
Equivalently, writing the privacy loss budget as $\epsilon_n={2a}/{b_n}$, if $\epsilon_n\lesssim n^{-1/2}$, then no estimator sequence can consistently recover $g_0$ in $W_1$ over the class $\mathcal P([-a,a])$.
\end{restatable}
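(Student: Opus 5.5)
The plan is a two-point (Le Cam) argument with $g_+=\delta_a$ and $g_-=\delta_{-a}$. These satisfy $W_1(g_+,g_-)=2a$. For any estimate $g_n^\star$, the triangle inequality gives $W_1(g_n^\star,g_+)+W_1(g_n^\star,g_-)\ge 2a$. Hence at least one of the events $\{W_1(g_n^\star,g_\pm)\ge a\}$ always occurs.

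Define the test $\psi=\mathbf 1\{W_1(g_n^\star,g_+)\ge a\}$. On the complement $\{\psi=0\}$ we have $W_1(g_n^\star,g_-)\ge a$. Therefore
\begin{equation*}
\max_{\pm}\mathbb P^n_{g_\pm}\bigl(W_1(g_n^\star,g_\pm)\ge a\bigr)\ge \tfrac12\bigl(\mathbb P^n_{g_+}(\psi=1)+\mathbb P^n_{g_-}(\psi=0)\bigr)\ge \tfrac12\int\min(p_+^n,p_-^n),
\end{equation*}
where $p_\pm^n$ is the $n$-fold product of $p_\pm:=f_{b_n}(\cdot\mp a)$. I will lower bound the affinity with the standard inequality $\int\min(p,q)\ge \tfrac12\exp(-\mathrm{KL}(p\|q))$ (Bretagnolle--Huber). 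KL tensorizes, so the final bound is $\tfrac14\exp(-n\,\mathrm{KL}(p_+\|p_-))$.

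The key computation is the KL divergence between two Laplace laws with common scale $b$ and locations separated by $\Delta=2a$. It is explicit: $\mathrm{KL}=\Delta/b+e^{-\Delta/b}-1\le \Delta^2/(2b^2)=2a^2/b^2$, using $e^{-x}\le 1-x+x^2/2$. For $b_n\ge c\sqrt n$ this gives $n\,\mathrm{KL}\le 2a^2 n/b_n^2\le 2a^2/c^2$. Consequently, for all large $n$,
\begin{equation*}
\sup_g\mathbb P^n_g\bigl(W_1(g_n^\star,g)\ge a\bigr)\ge\tfrac14 e^{-2a^2/c^2},
\end{equation*}
and taking $\liminf$ yields \eqref{eq:minimax-impossibility}.

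The consequence follows from the display. If $W_1(g_n^\star,g)\to0$ in probability for every $g$, then it holds in particular for $g_+$ and $g_-$. Then $\mathbb P^n_{g_\pm}(W_1\ge a)\to0$ for both, so the supremum over this two-point subfamily (the quantity the argument actually bounds) tends to $0$, contradicting the positive lower bound. The privacy restatement comes from substituting $b_n=2a/\epsilon_n$.

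The only step needing care is the KL formula. I would verify it by direct integration of $\log(p_+/p_-)=(|x+a|-|x-a|)/b$ against $p_+$, splitting at $\pm a$. Alternatively, I would bound KL by $\chi^2$ or Hellinger if the closed form becomes messy, since any bound of order $a^2/b^2$ suffices up to the stated constant.
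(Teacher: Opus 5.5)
Your proposal is correct and follows the paper's proof essentially step for step. It uses the same two-point pair $\delta_{\pm a}$, the same closed-form Laplace KL bound $2a/b_n+e^{-2a/b_n}-1\le 2a^2/b_n^2$, tensorization, and Bretagnolle--Huber, and so arrives at the same constant $\tfrac14 e^{-2a^2/c^2}$; the only differences (stating Bretagnolle--Huber through the affinity $\int\min(p_+^n,p_-^n)$ and using the complementary test) are cosmetic.
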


Taken together, Corollary~\ref{cor:b-growth} and Theorem~\ref{thm:impossibility-sqrtn} identify two explicit noise regimes.
If $b_n$ grows slower than $n^{3/16}(\log n)^{-1/2}$, then the NPMLE remains consistent in $W_1$.
If $b_n$ grows on the order of $\sqrt n$ or larger, then uniform consistent recovery over $\mathcal P([-a,a])$ is impossible for any estimator.
The numerical experiments in the next section illustrate the corresponding finite-sample behavior.

\section{Numerical experiments}\label{sec:exp}

We study the empirical behavior of the NPMLE under the Laplace convolution model.
The experiments in this section have three purposes: to illustrate how the latent recovery error depends on the sample size $n$ and the privacy loss budget $\epsilon$, to examine the empirical support size of the fitted NPMLE through its active support size, and to compare the simulations with the theoretical scaling behaviors from Section~\ref{sec:rate}.

\paragraph{Experimental setup.}We consider three distributions $g_0$ on $[-a,a]$. 
The first is a sparse discrete distribution,
\begin{equation*}
g_0^{\mathrm{disc}}=0.25\delta_{-0.80a}+0.45\delta_{-0.05a}+0.30\delta_{0.65a}.
\end{equation*}
The second is a continuous distribution $g_0^{\beta}$ obtained by the mapping 
\begin{equation*}
\theta=-a+2aU, \quad U\sim\Beta(2.5,5).
\end{equation*}
The third is a mixed beta-atom distribution,
\begin{equation*}
g_0^{\mathrm{mix}}=0.2\delta_{-a}+0.3H_L+0.3 H_R+0.2 \delta_a,
\end{equation*}
where $H_L$ is the distribution of $-0.7a + 0.5a V_L$ with $V_L\sim\Beta(2,6)$, so that $H_L$ is supported on $[-0.7a,-0.2a]$, and $H_R$ is the distribution of $0.2a + 0.8a V_R$ with $V_R\sim\Beta(6,2)$, so that $H_R$ is supported on $[0.2a,a]$.
Throughout the experiments, we set $a=3$ and use the relation $b=2a/\epsilon$ when the privacy budget $\epsilon$ is varied.

For each combination of $n\in\{50,100,200,500,1000,2000\}$ and $\epsilon\in\{0.1,0.2,0.5,1, 2,\allowbreak 5, 10, 20\}$, we compute the NPMLE $\widehat g_n$ by Algorithm~\ref{alg:em} and evaluate the 1-Wasserstein error $W_1(\widehat g_n,g_0)$. 
Each configuration is repeated 50 times, and we report the empirical mean together with one empirical standard deviation.

\begin{figure}[htb]
    \centering
    \includegraphics[width=\textwidth]{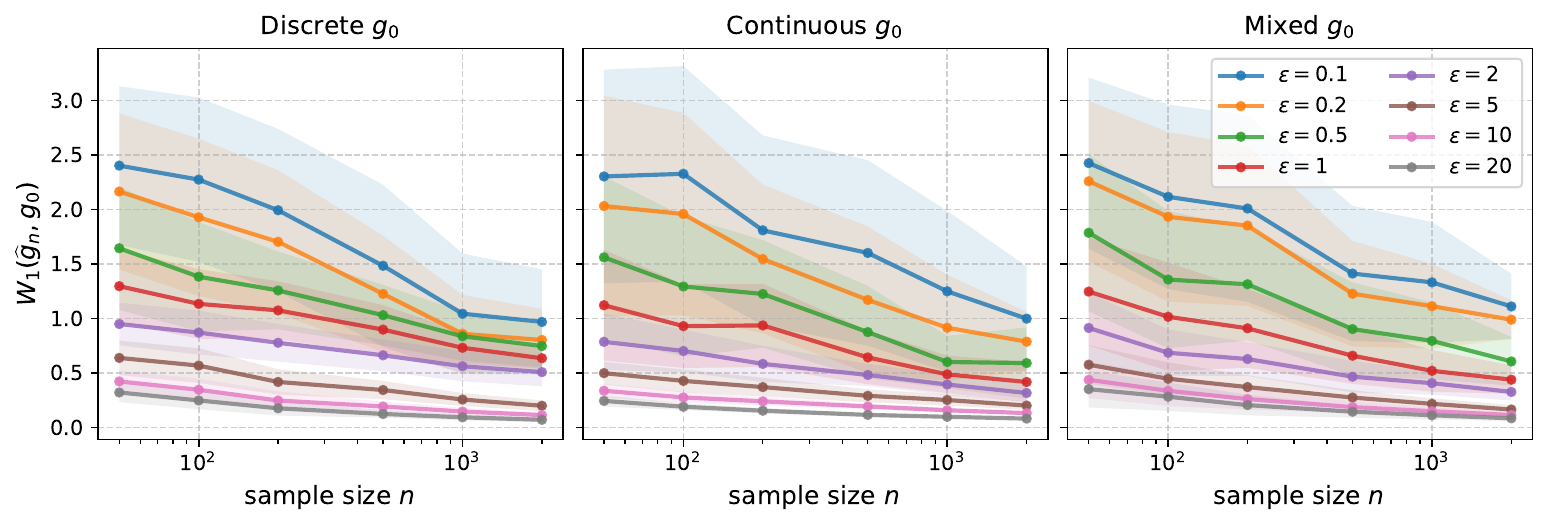}
    \vspace{-10pt}
    \caption{Wasserstein error $W_1(\widehat g_n,g_0)$ versus sample size $n$, for several privacy loss budgets $\epsilon$. 
    The shaded bands indicate one empirical standard deviation over 50 repetitions.}
    \label{fig:w1-n}
\end{figure}

\begin{figure}[htb]
    \centering
    \includegraphics[width=\textwidth]{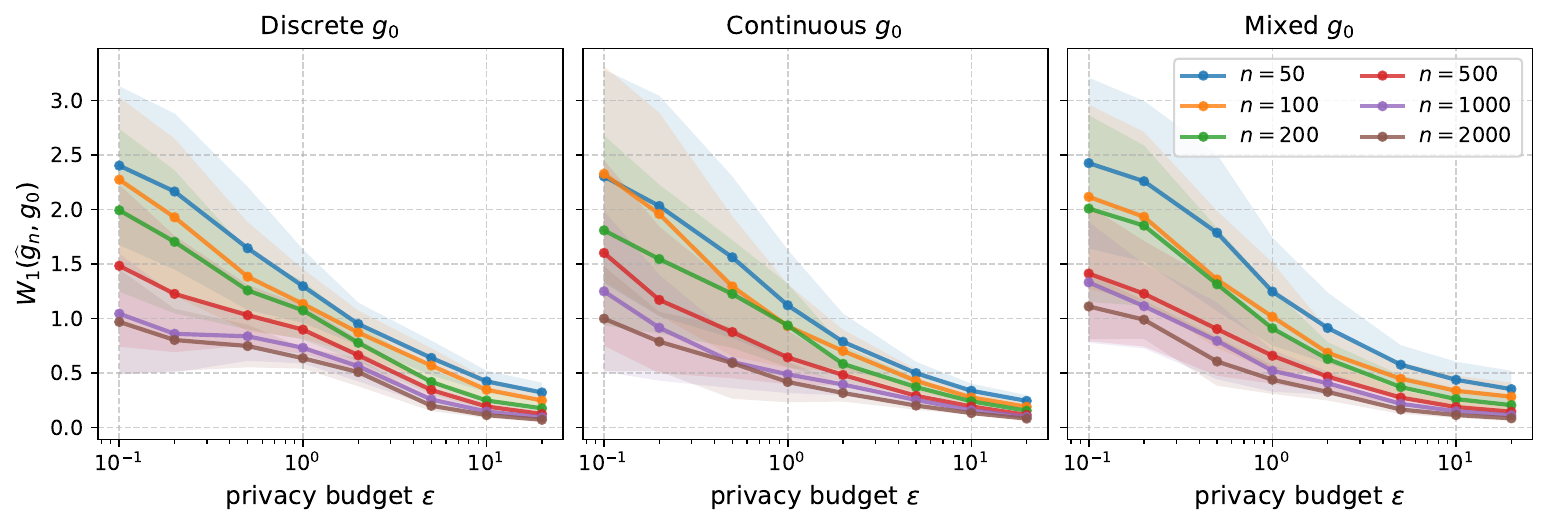}
    \vspace{-10pt}
    \caption{Wasserstein error $W_1(\widehat g_n,g_0)$ versus privacy loss budget $\epsilon$, for several sample sizes $n$. 
    The shaded bands indicate one empirical standard deviation over 50 repetitions.}
    \label{fig:w1-eps}
\end{figure}

\paragraph{Visualization of latent recovery error.}Figures~\ref{fig:w1-n} and~\ref{fig:w1-eps} summarize the recovery behavior. 
Figure~\ref{fig:w1-n} plots $W_1(\widehat g_n,g_0)$ against $n$ for several privacy loss budgets $\epsilon$, while Figure~\ref{fig:w1-eps} plots the same quantity against $\epsilon$ for several sample sizes $n$. 
Across all three choices of $g_0$, the error decreases as $n$ increases and also decreases as $\epsilon$ increases, in agreement with the qualitative prediction of Corollary~\ref{cor:w1-rate}. 
{The decrease is especially visible when moving from very small privacy budgets to moderate values of $\epsilon$, while the curves tend to flatten for larger $\epsilon$.
The three latent distributions show broadly similar patterns. We also find that the discrete case is not uniformly easier than the continuous or mixed cases.}

\paragraph{Active support size of the fitted NPMLE.}In addition to recovery error, we also examine the active support size of the fitted NPMLE.
In the Gaussian-noise setting, \cite{polyanskiy2020self} showed that the NPMLE has support size of order $\mathcal{O}(\log n)$  with high probability.
We are not aware of an analogous result for Laplace mixtures, and we examine this behavior empirically.
In our results, we report a thresholded active support size,
\begin{equation*}
N_{\mathrm{act},\tau}(\widehat g_n):=\#\left\{j:\widehat w_j>\tau\right\},
\end{equation*}
where $\tau=10^{-10}$ is a fixed threshold used uniformly across all experiments.
The thresholding is needed because the EM updates cannot drive positively initialized weights exactly to zero after a finite number of steps.

\begin{figure}[htbp]
    \centering
    \includegraphics[width=\textwidth]{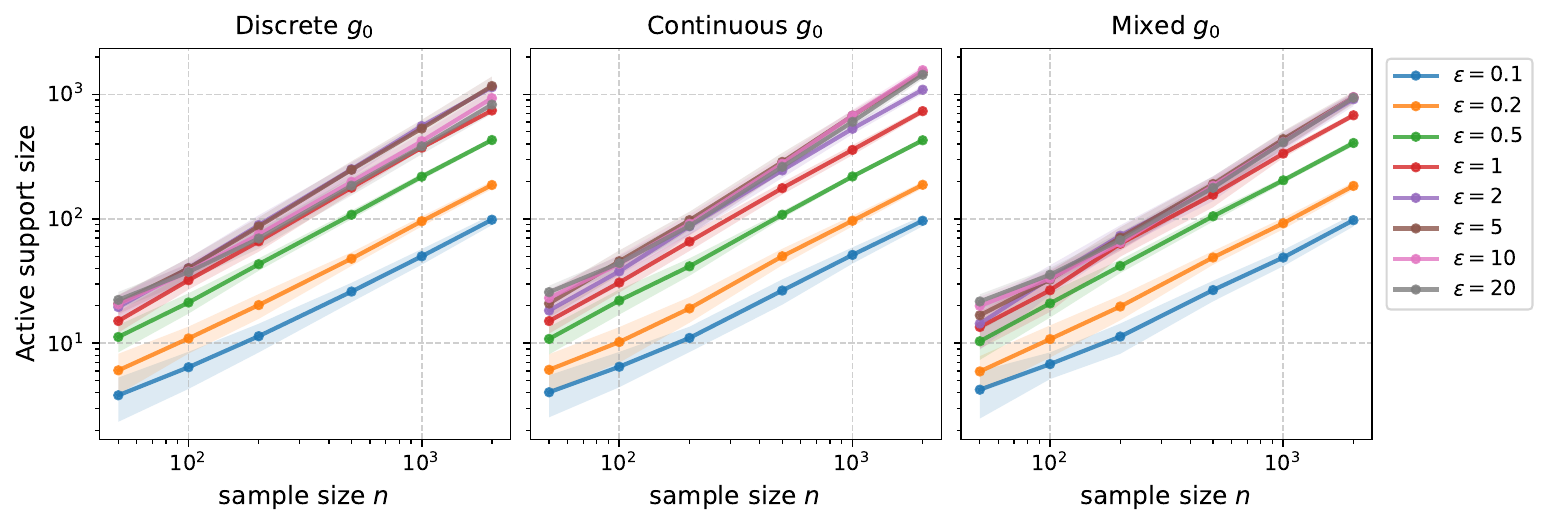}
    \vspace{-10pt}
    \caption{Log-log plot for active support size versus sample size $n$, for several privacy loss budgets $\epsilon$. The vertical axis is shown on a logarithmic scale, and the shaded bands indicate one empirical standard deviation over 50 repetitions.}
    \label{fig:act-comp}
\end{figure}

Figure~\ref{fig:act-comp} shows that the active support size increases with both $n$ and $\epsilon$ across all three choices of $g_0$.
Even when the true mixing distribution is discrete with only three atoms, the fitted NPMLE under Laplace noise typically uses many active support points, and this number grows steadily with the sample size.
Within the sample size range considered in this paper, the growth appears substantially faster than logarithmic, with the log-log plots suggesting a polynomial order increase in $n$.
We do not interpret these experiments as identifying a precise asymptotic law, but they do suggest that the fitted support size in the Laplace case behaves quite differently from the logarithmic regime established in the Gaussian case.

\paragraph{Scaling of privacy noise.}We next consider a regime in which the privacy noise increases with the sample size according to
\begin{equation*}
b_n=\left(\frac{n}{n_0}\right)^\alpha.
\end{equation*}
Corollary~\ref{cor:b-growth} shows that the NPMLE remains consistent as long as $b_n$ grows no faster than $n^{3/16}(\log n)^{-1/2}$. 
Motivated by this result, we examine several polynomial growth exponents $\alpha$ around the theoretical scale $3/16$.

Figure~\ref{fig:bn-growth} displays the mean Wasserstein error $W_1(\widehat g_n,g_0)$ for the three choices of $g_0$ over 50 replications, where we set $n_0 = 50$.
When $\alpha\le3/16$, the error decreases steadily with $n$ in all three cases, which is consistent with the convergence guarantee. 
{For growth rates moderately above this boundary, the decrease becomes slower and less uniform across the three latent distributions.
At $\alpha=1/2$, corresponding to the $\sqrt n$ noise scale in Theorem~\ref{thm:impossibility-sqrtn}, the curves no longer show a clear decay pattern, while for still larger values such as $\alpha=0.75$ and $\alpha=1$, the error often increases with $n$.
These finite-sample results do not identify a sharp transition, but they show a clear deterioration as the noise growth approaches the $\sqrt n$ scale.}

\begin{figure}[htbp]
    \centering
    \includegraphics[width=\textwidth]{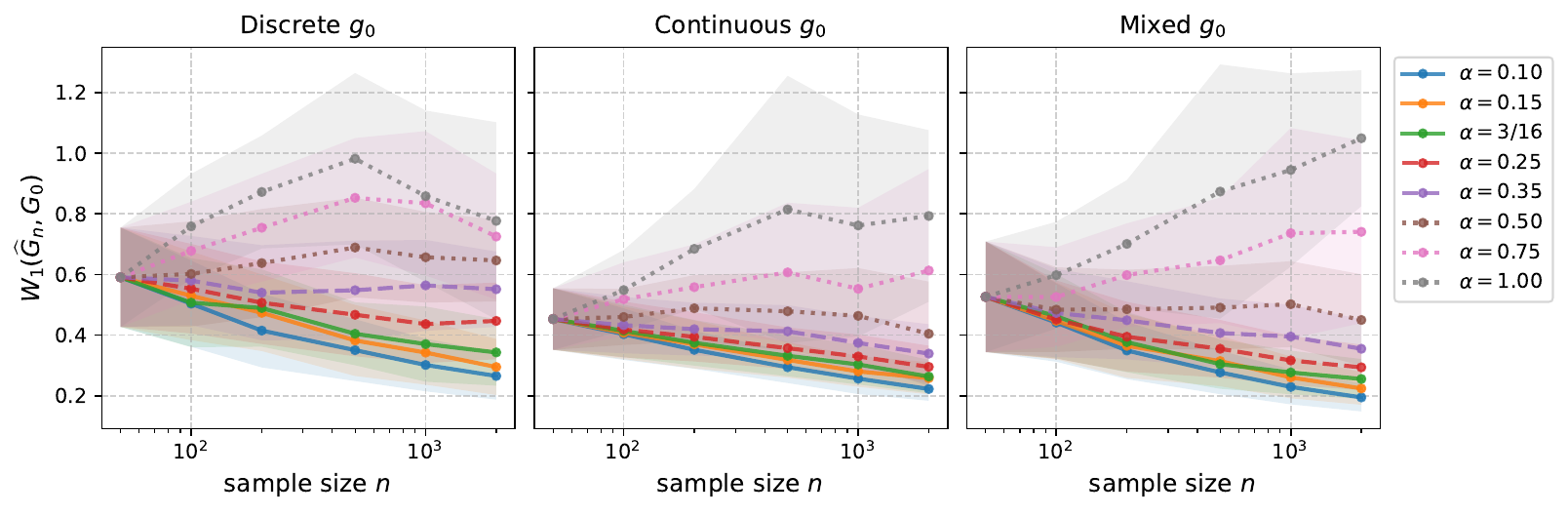}
    \vspace{-10pt}
    \caption{Wasserstein error $W_1(\widehat g_n,g_0)$ under the growing-noise regime for different growth rates $\alpha$. Solid lines indicate the sufficient consistency regime $\alpha\le 3/16$, dashed lines indicate intermediate growth rates, and dotted lines indicate $\alpha\ge 1/2$.}
    \label{fig:bn-growth}
\end{figure}

\section{Conclusion}\label{sec:conclusion}

We studied nonparametric maximum likelihood estimation for recovering an unknown latent distribution from locally differentially private observations generated by the Laplace mechanism.
We showed that the NPMLE admits a finite-dimensional formulation where the support is restricted to a projected observation set, leading to a practical EM algorithm. 
Building on the Wasserstein inversion framework of~\citep{rousseau2024wasserstein}, we derived a Laplace-specific inversion inequality that bounds the Wasserstein error between latent distributions in terms of the error in the privatized observation density, while explicitly tracking its dependence on the Laplace noise scale.
Together, these results provide a likelihood-based framework for latent distribution recovery under Laplace local differential privacy and make the associated privacy-utility tradeoff explicit.

Our theoretical analysis shows that the NPMLE remains consistent whenever the privacy loss budget $\epsilon_n$ decays slower than $n^{-3/16}(\log n)^{1/2}$.
In contrast, if $\epsilon_n$ decays faster than $n^{-1/2}$, uniformly consistent recovery over $\mathcal P([-a,a])$ is impossible for any estimator.
We empirically study finite-sample behavior across different sample sizes and privacy levels.

Several questions remain open.
The most immediate is to close the gap between the $n^{-3/16}$ rate sufficient for consistency and the $n^{-1/2}$ rate necessary for consistency.
Another important direction is to sharpen the support size of the NPMLE under the Laplace convolution model.
For Gaussian mixtures, logarithmic support-size bounds are known~\citep{polyanskiy2020self}, whereas no comparable result is currently available for the Laplace case. 
Our empirical results suggest that the support size grows faster than logarithmically, indicating a difference between the Gaussian and Laplace settings.

Finally, it is of interest to extend the analysis beyond bounded latent spaces and beyond one dimension. 
The multidimensional setting presents significant new challenges. Although the support of the latent distribution remains restricted to a finite candidate set determined by the observations, this candidate set is the Cartesian product of the coordinate-wise projected observations and therefore grows exponentially with the dimension. %
Moreover, unlike in one-dimension, the Wasserstein distance is no longer just the $L^1$ distance between distribution functions, so that asymptotic analysis in this setting will require new techniques.

\section*{Acknowledgments}
VR acknowledges support from the National Science Foundation through grant DMS-2503118.
NJ acknowledges support from the National Science Foundation through grant DMS-2610236.

\bibliographystyle{plainnat}
\bibliography{refs}

\begin{appendix}

\section{Proofs for Section~\ref{sec:npmle}}\label{app:npmle}

\restatesuppportpw*

\begin{proof}[Proof of Lemma~\ref{lem:support-characterization}]
Fix $\mu\in[-a,a]$ and consider $g_t=(1-t)\widehat g_n+t\delta_\mu$ for $t\in[0,1]$, where $\delta_{\mu}$ denotes the Dirac probability measure concentrated at $\mu$. By optimality of $\widehat g_n$,
\begin{equation*}
\ell_n(\widehat g_n) \ge \ell_n(g_t).
\end{equation*}
Hence, the right derivative at $t=0$ satisfies
\begin{align*}
0 &\ge \left.\frac{\mrd}{\mrd t}\ell_n(g_t)\right|_{t=0}\\
&=\left.\frac{\mrd}{\mrd t}\frac{1}{n}\sum_{i=1}^n \log\left((1-t)m_{\widehat g_n}(X_i)+t m_{\delta_\mu}(X_i)\right)\right|_{t=0}\\
&=\frac{1}{n}\sum_{i=1}^n \frac{m_{\delta_\mu}(X_i)-m_{\widehat g_n}(X_i)}{m_{\widehat g_n}(X_i)} \\
&=\left(\frac{1}{n}\sum_{i=1}^n \frac{m_{\delta_\mu}(X_i)}{m_{\widehat g_n}(X_i)} \right) - 1,
\end{align*}
and note that
\begin{equation*}
m_{\delta_\mu}(X_i) = \int f_b(X_i-\theta)\delta_\mu(\mathrm{d}\theta)=f_b(X_i-\mu),
\end{equation*}
that is,
\begin{equation}\label{eq:Deqle1}
D_{\widehat g_n}(\mu) =\frac{1}{n}\sum_{i=1}^n \frac{f_b(X_i-\mu)}{m_{\widehat g_n}(X_i)} \le 1 \quad \text{for all }\mu\in[-a,a].
\end{equation}
On the other hand, averaging $D_{\widehat g_n}(\cdot)$ with respect to $\widehat g_n(\cdot)$ gives
\begin{align}\label{eq:avg_d_on_optim_measure}
\int D_{\widehat g_n}(\mu)\widehat g_n(\mathrm{d}\mu)
&= \int \left(\frac{1}{n}\sum_{i=1}^n \frac{f_b(X_i - \mu)}{m_{\widehat g_n}(X_i)}\right)\widehat g_n(\mathrm{d}\mu) \notag\\
&= \frac{1}{n}\sum_{i=1}^n \frac{\int f_b(X_i - \mu) \widehat g_n(\mathrm{d}\mu)}{m_{\widehat g_n}(X_i)} \\
&= \frac{1}{n}\sum_{i=1}^n 1 = 1.
\end{align}
From Eq.~\eqref{eq:Deqle1}, the global maximum of $D_{\widehat g_n}(\cdot)$ is at most 1. From Eq.~\eqref{eq:avg_d_on_optim_measure}, we know its average value under $\widehat g_n$ is exactly 1. This is only possible if the measure $\widehat g_n$ places all its mass on the set where $D_{\widehat g_n}(\mu)$ achieves its maximum value of 1. Therefore,
\begin{equation*}
\supp(\widehat g_n) \subseteq \{\mu \in [-a,a] : D_{\widehat g_n}(\mu) = 1\} = \arg\max_{\mu\in[-a,a]} D_{\widehat g_n}(\mu).
\end{equation*}
\end{proof}

\restategeneralkernel*

\begin{proof}[Proof of Corollary~\ref{cor:general-kernel-support}]
By Lemma~\ref{lem:support-characterization}, it suffices to show that every maximizer of
\begin{equation*}
D_{\widehat g_n}(\mu):=\frac{1}{n}\sum_{i=1}^n \frac{f(X_i-\mu)}{m_{\widehat g_n}(X_i)},\quad \mu\in[a_1,a_2],
\end{equation*}
belongs to
\begin{equation*}
\left\{\Pi_{[a_1,a_2]}(X_1),\dots,\Pi_{[a_1,a_2]}(X_n)\right\}.
\end{equation*}
Write
\begin{equation*}
c_i:=\frac{1}{n m_{\widehat g_n}(X_i)}>0,\quad i=1,\dots,n.
\end{equation*}
Since $f(x-\mu)=\varphi(|x-\mu|)$,
\begin{equation*}
D_{\widehat g_n}(\mu)=\sum_{i=1}^n c_i \varphi(|X_i-\mu|).
\end{equation*}

Let $u_1<\cdots<u_{\nu}$ be the distinct values among
$\Pi_{[a_1,a_2]}(X_1),\dots,\Pi_{[a_1,a_2]}(X_n)$.
Consider any open interval
\begin{equation*}
I\subset [a_1,a_2]\setminus\{u_1,\dots,u_{\nu}\}.
\end{equation*}
Then for each $i$, the sign of $X_i-\mu$ is constant on $I$, so $\mu\mapsto |X_i-\mu|$ is affine on $I$. Hence, for every $\mu\in I$,
\begin{equation*}
\frac{\mathrm d^2}{\mathrm d\mu^2}\varphi(|X_i-\mu|)=\varphi''(|X_i-\mu|)>0,
\end{equation*}
and therefore
\begin{equation*}
\frac{\mathrm d^2}{\mathrm d\mu^2}D_{\widehat g_n}(\mu)=\sum_{i=1}^n c_i \varphi''(|X_i-\mu|)>0.
\end{equation*}
Thus $D_{\widehat g_n}$ is strictly convex on every connected component of
$[a_1,a_2]\setminus\{u_1,\dots,u_{\nu}\}$ contained in $(a_1,a_2)$, so it cannot attain its maximum at an interior point of such a component.

It remains to treat the boundary points. If $a_1\notin\{u_1,\dots,u_{\nu}\}$, then $X_i>a_1$ for all $i$, so for $\mu\in [a_1,u_1)$,
\begin{equation*}
|X_i-\mu|=X_i-\mu\quad\text{and}\quad\frac{\mathrm d}{\mathrm d\mu}D_{\widehat g_n}(\mu)=-\sum_{i=1}^n c_i \varphi'(X_i-\mu)>0,
\end{equation*}
since $\varphi'(t)<0$ for $t>0$. Hence $D_{\widehat g_n}$ is strictly increasing on $[a_1,u_1)$, and $a_1$ cannot be a maximizer. Similarly, if $a_2\notin\{u_1,\dots,u_{\nu}\}$, then $X_i<a_2$ for all $i$, so for $\mu\in (u_{\nu},a_2]$,
\begin{equation*}
|X_i-\mu|=\mu-X_i\quad\text{and}\quad\frac{\mathrm d}{\mathrm d\mu}D_{\widehat g_n}(\mu)=\sum_{i=1}^n c_i \varphi'(\mu-X_i)<0.
\end{equation*}
Thus $D_{\widehat g_n}$ is strictly decreasing on $(u_{\nu},a_2]$, and $a_2$ cannot be a maximizer.

Therefore every maximizer of $D_{\widehat g_n}$ over $[a_1,a_2]$ must belong to
\begin{equation*}
\{u_1,\dots,u_{\nu}\}=\left\{\Pi_{[a_1,a_2]}(X_1),\dots,\Pi_{[a_1,a_2]}(X_n)\right\}.
\end{equation*}
Applying Lemma~\ref{lem:support-characterization} gives
\begin{equation*}
\supp(\widehat g_n)\subseteq \left\{\Pi_{[a_1,a_2]}(X_1),\dots,\Pi_{[a_1,a_2]}(X_n)\right\}.
\end{equation*}
\end{proof}

\begin{lemma}[Full column rank of the Laplace likelihood matrix]\label{lem:laplace-full-rank}
Let $u_1<\cdots<u_\nu$ be the distinct values among
\begin{equation*}
\Pi_{[-a,a]}(X_1),\dots,\Pi_{[-a,a]}(X_n),
\end{equation*}
and define the $n\times \nu$ matrix
\begin{equation*}
\mathbf S=(s_{ij}),\quad s_{ij}:=f_b(X_i-u_j)=\frac{1}{2b}\exp\left(-\frac{|X_i-u_j|}{b}\right).
\end{equation*}
Then the columns of $\, \mathbf S$ are linearly independent.
\end{lemma}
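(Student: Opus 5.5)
We will show that the $\nu$ columns of $\mathbf S$ are linearly independent by exhibiting a $\nu\times\nu$ submatrix of $\mathbf S$ that is nonsingular. For each $j$, pick an index $i(j)$ with $\Pi_{[-a,a]}(X_{i(j)})=u_j$; since the $u_j$ are distinct, the indices $i(1),\dots,i(\nu)$ are distinct. Dropping the common factor $1/(2b)$, we obtain the square matrix
\begin{equation*}
A=(A_{kj}),\quad A_{kj}:=\exp\left(-\frac{|X_{i(k)}-u_j|}{b}\right).
\end{equation*}
It then suffices to prove $\det A\neq 0$.

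\textbf{Interior rows.} If $X_{i(k)}\in[-a,a]$, then $X_{i(k)}=u_k$, and the row is $e^{-|u_k-u_j|/b}$. Set $q_j:=e^{u_j/b}$, so that $q_1<\cdots<q_\nu$. Then $e^{-|u_k-u_j|/b}=\min(q_k,q_j)/\max(q_k,q_j)$.

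\textbf{Boundary rows.} If $X_{i(k)}>a$, then necessarily $u_k=a=u_\nu$ and $k=\nu$. Because $X_{i(\nu)}\ge u_j$ for all $j$, the row equals $e^{-X_{i(\nu)}/b}\,q_j$. This is a positive multiple of the interior row for $u_\nu$, namely $q_j/q_\nu$. Symmetrically, if $X_{i(k)}<-a$, then $k=1$ and the row is a positive multiple of $q_1/q_j$. Row scaling does not affect singularity, so we may assume $A_{kj}=\min(q_k,q_j)/\max(q_k,q_j)$ for all $k,j$.

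\textbf{Determinant.} This is the Laplace, or exponential, kernel matrix $e^{-|u_k-u_j|/b}$ at distinct points. It is positive definite because the Laplace kernel has strictly positive Fourier transform $(1+b^2t^2)^{-1}$, as recalled in Section~\ref{sec:deconv}. For a self-contained argument, write $A_{kj}=\alpha_{\min(k,j)}\beta_{\max(k,j)}$ with $\alpha_k=q_k$ and $\beta_k=1/q_k$, which is a Green's (single-pair) matrix. Subtracting $(q_{k-1}/q_k)$ times row $k$ from row $k-1$, for $k=2,\dots,\nu$, successively reduces $A$ to an upper triangular matrix. This gives
\begin{equation*}
\det A=\prod_{k=1}^{\nu-1}\left(1-\frac{q_k^2}{q_{k+1}^2}\right),
\end{equation*}
which is strictly positive because $q_k<q_{k+1}$.

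The main obstacle is the boundary rows, where $X_{i(k)}$ lies outside $[-a,a]$ and the row is not the interior row at $u_k$. The observation above handles this: for a point beyond the end of the support, the row is a positive rescaling of the row at the corresponding endpoint $\pm a$. The one remaining check is the triangular row-reduction computation, which is routine for matrices of the form $\alpha_{\min}\beta_{\max}$.

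Hence $A$ is nonsingular. Since $A$ is a $\nu\times\nu$ submatrix of $\mathbf S$ up to the factor $1/(2b)$, the columns of $\mathbf S$ are linearly independent.
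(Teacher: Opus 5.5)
Your argument is correct, but it takes a different route from the paper. The paper works on the null space directly. It assumes $\mathbf S c=0$, forms $f(x)=\sum_j \frac{c_j}{2b}e^{-|x-u_j|/b}$, and obtains $\sum_j c_j e^{-u_j/b}=0$ from $f(X_{(1)})=0$. It then eliminates $c_1,\dots,c_{\nu-2}$ one at a time by evaluating $f$ at observations equal to the interior points $u_{k+1}$. It finishes with a two-case analysis for $c_{\nu-1},c_\nu$, depending on whether $u_\nu=a$ is attained only by observations larger than $a$. You instead pick one representative row per $u_j$. You note that a row generated by some $X_i>a$ (resp.\ $X_i<-a$) is a positive multiple of the row at $u_\nu=a$ (resp.\ $u_1=-a$). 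This reduces the problem to the exponential-kernel Gram matrix $(e^{-|u_k-u_j|/b})_{k,j}$ at distinct points, which is nonsingular either by Bochner's theorem or by its single-pair (Green's matrix) structure.

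Both proofs rest on the same fact: $e^{-|x-u_j|/b}$ factors as $e^{-x/b}e^{u_j/b}$ or $e^{x/b}e^{-u_j/b}$ depending on which side of $u_j$ the point $x$ lies. The paper's version is fully elementary and needs no identification of a matrix class. Yours handles the two endpoints symmetrically through row rescaling, which removes the paper's asymmetric case split. It also gives the explicit normalized determinant $\prod_{k=1}^{\nu-1}(1-e^{-2(u_{k+1}-u_k)/b})$. That value shows how the matrix approaches singularity when the gaps $u_{k+1}-u_k$ are small relative to $b$, which matters in the large-noise regime the paper studies.

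One cosmetic slip: subtracting $(q_{k-1}/q_k)$ times row $k$ from row $k-1$ zeroes the entries of row $k-1$ in columns $j\ge k$. The reduced matrix is therefore lower triangular, not upper triangular. Its diagonal entries are still $1-q_k^2/q_{k+1}^2$, with $1$ in the last row, so your determinant formula stands.
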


\begin{proof}[Proof of Lemma~\ref{lem:laplace-full-rank}]
Suppose that
\begin{equation*}
\sum_{j=1}^\nu c_j s_{ij}=0,\quad i=1,\dots,n,
\end{equation*}
for some coefficients $c_1,\dots,c_\nu$. Define
\begin{equation*}
f(x):=\sum_{j=1}^\nu \frac{c_j}{2b}\exp\left(-\frac{|x-u_j|}{b}\right).
\end{equation*}
Then $f(X_i)=0$ for all $i=1,\dots,n$.
Let $X_{(1)}\le \cdots \le X_{(n)}$ denote the ordered observations. Since $X_{(1)}\le u_1$, evaluating at $X_{(1)}$ gives
\begin{equation}\label{eq:left-tail-identity}
\sum_{j=1}^\nu c_j e^{-u_j/b}=0.
\end{equation}

We now eliminate the coefficients successively. For $k=1,\dots,\nu-2$, the point $u_{k+1}$ lies in $(-a,a)$, so it must equal one of the observations. Hence there exists an index $t_k$ such that
\begin{equation*}
X_{t_k}=u_{k+1}\in (u_k,u_{k+1}].
\end{equation*}
Assume inductively that $c_1=\cdots=c_{k-1}=0$. Then \eqref{eq:left-tail-identity} reduces to
\begin{equation*}
\sum_{j=k}^\nu c_j e^{-u_j/b}=0.
\end{equation*}
Evaluating $f$ at $X_{t_k}=u_{k+1}$ yields
\begin{align*}
0&=\sum_{j=k}^\nu \frac{c_j}{2b}\exp\left(-\frac{|u_{k+1}-u_j|}{b}\right) \\
&=\frac{c_k}{2b}\exp\left(\frac{u_k-u_{k+1}}{b}\right)+\frac{1}{2b}\exp\left(\frac{u_{k+1}}{b}\right)\sum_{j=k+1}^\nu c_j e^{-u_j/b} \\
&=\frac{c_k}{2b}\exp\left(\frac{u_k-u_{k+1}}{b}\right)-\frac{c_k}{2b}\exp\left(\frac{u_{k+1}-u_k}{b}\right) \\
&=-\frac{c_k}{b}\sinh\left(\frac{u_{k+1}-u_k}{b}\right).
\end{align*}
Since $u_{k+1}>u_k$, this implies $c_k=0$. Thus
\begin{equation*}
c_1=\cdots=c_{\nu-2}=0.
\end{equation*}

It remains to treat the last two coefficients $c_{\nu-1}$ and $c_\nu$.

\textbf{Case (i).} If $u_\nu<a$, or if $u_\nu=a$ and some observation equals $a$, then there exists an observation in $(u_{\nu-1},u_\nu]$. Repeating the same argument once more yields $c_{\nu-1}=0$, and then~\eqref{eq:left-tail-identity} gives $c_\nu=0$.

\textbf{Case (ii).} If $u_\nu=a$ and no observation lies in $(u_{\nu-1},a]$. Then there must exist an observation $X_r>a$, since otherwise the projected value $a$ would not occur among
\begin{equation*}
\Pi_{[-a,a]}(X_1),\dots,\Pi_{[-a,a]}(X_n).
\end{equation*}
Using $c_1=\cdots=c_{\nu-2}=0$, equation \eqref{eq:left-tail-identity} becomes
\begin{equation}\label{eq:last-two-left}
c_{\nu-1}e^{-u_{\nu-1}/b}+c_\nu e^{-a/b}=0.
\end{equation}
On the other hand, evaluating $f(X_r)=0$ and using $X_r>a=u_\nu>u_{\nu-1}$ gives
\begin{equation}\label{eq:last-two-right}
c_{\nu-1}e^{u_{\nu-1}/b}+c_\nu e^{a/b}=0.
\end{equation}
From \eqref{eq:last-two-left}, we have
\begin{equation*}
c_\nu=-c_{\nu-1}\exp\left(\frac{a-u_{\nu-1}}{b}\right).
\end{equation*}
Substituting this into \eqref{eq:last-two-right} gives
\begin{align*}
0&=c_{\nu-1}e^{u_{\nu-1}/b}+c_\nu e^{a/b} \\
&=c_{\nu-1}e^{u_{\nu-1}/b}-c_{\nu-1}\exp\left(\frac{a-u_{\nu-1}}{b}\right)e^{a/b} \\
&=c_{\nu-1}\left(e^{u_{\nu-1}/b}-e^{(2a-u_{\nu-1})/b}\right) \\
&=-2c_{\nu-1}e^{a/b}\sinh\left(\frac{a-u_{\nu-1}}{b}\right).
\end{align*}
Since $a>u_{\nu-1}$, we have
\begin{equation*}
\sinh\left(\frac{a-u_{\nu-1}}{b}\right)>0,
\end{equation*}
and therefore $c_{\nu-1}=0$. Then \eqref{eq:last-two-left} yields $c_\nu=0$.

Therefore, $c_1=\cdots=c_\nu=0$, and the columns of $\mathbf S$ are linearly independent.
\end{proof}

\restatestrictconcavity*
\begin{proof}[Proof of Corollary~\ref{cor:strict-concavity}]
Write
\begin{equation*}
s_{ij}:=f_b(X_i-u_j), \quad i=1,\dots,n,\ \ j=1,\dots,\nu.
\end{equation*}
Then
\begin{equation*}
\ell_n(w)=\sum_{i=1}^n \log \left(\sum_{j=1}^{\nu} w_j s_{ij}\right).
\end{equation*}
Its Hessian is given by
\begin{equation*}
\frac{\partial^2 \ell_n(w)}{\partial w_j\partial w_k}=-\sum_{i=1}^n\frac{s_{ij}s_{ik}}{\left(\sum_{\ell=1}^\nu w_\ell s_{i\ell}\right)^2}.
\end{equation*}
Hence, for any $v=(v_1,\dots,v_\nu)^\top\in\mathbb R^\nu$,
\begin{align*}
v^\top \nabla^2 \ell_n(w)\, v
&=-\sum_{i=1}^n\frac{\left(\sum_{j=1}^\nu v_j s_{ij}\right)^2}{\left(\sum_{\ell=1}^\nu w_\ell s_{i\ell}\right)^2}\le 0.
\end{align*}
Therefore $\ell_n$ is concave on $\Delta_\nu$.

If equality holds, then
\begin{equation*}
\sum_{j=1}^\nu v_j s_{ij}=0,\quad i=1,\dots,n,
\end{equation*}
which means that $\mathbf S v=0$, where $\mathbf S=(s_{ij})$ is the likelihood matrix from Lemma~\ref{lem:laplace-full-rank} in the Appendix. By Lemma~\ref{lem:laplace-full-rank}, the columns of $\mathbf S$ are linearly independent, so $\mathbf S v=0$ implies $v=0$. Hence, the Hessian is negative definite, and $\ell_n$ is strictly concave on $\Delta_\nu$. Therefore, the maximizer of $\ell_n$ over $\Delta_\nu$ is unique.
\end{proof}

\section{Proofs for Section~\ref{sec:deconv}}\label{app:deconv}

\restatefrequencysplit*

\begin{proof}[Proof of Proposition~\ref{prop:freq-split}]
Taking Fourier transforms in the sense of tempered distributions, for $t\neq 0$,
\begin{equation*}
\FT{G*k_h}(t) = \frac{\FT{g}(t)\FT{k}(ht)}{it} = \frac{\FT{m}_g(t)\FT{k}(ht)(1+b^2t^2)}{it}.
\end{equation*}
By the definition of $w_{1,h,b}$ and $w_{2,h,b}$,
\begin{equation*}
\FT{k}(ht)(1+b^2t^2)=w_{1,h,b}(t)+w_{2,h,b}(t).
\end{equation*}
Therefore
\begin{equation*}
\FT{G*k_h}(t) = \frac{\FT{m}_g(t)}{it}w_{1,h,b}(t)+\FT{m}_g(t)\frac{w_{2,h,b}(t)}{it}.
\end{equation*}
Since $\FT{M}_g(t)=\FT{m}_g(t)/(it)$ and $\FT{K}_{2,h,b}(t)=w_{2,h,b}(t)/(it)$, we obtain
\begin{equation*}
\FT{G*k_h}(t) = \FT{M}_g(t) w_{1,h,b}(t)+\FT{m}_g(t) \FT{K}_{2,h,b}(t).
\end{equation*}
Inverting the Fourier transform yields \eqref{eq:FGKh-decomp}. Identity \eqref{eq:FG-diff-decomp} follows by subtracting the corresponding decomposition for $g_0$.
\end{proof}

\restatebiasbound*

\begin{proof}[Proof of Lemma~\ref{lem:bias-bound}]
Let $G$ be the distribution function of measure $g$. By Fubini's theorem and the definition of $B_{g,h}$,
\begin{align*}
\|B_{g,h}\|_1
&=\int_{\mathbb R}\left|\int_{\mathbb R} k(u)\left(G(x-hu)-G(x)\right) \mrd u\right|\mrd x \\
&\le\int_{\mathbb R}|k(u)|\left(\int_{\mathbb R}|G(x-hu)-G(x)|\mrd x\right)\mrd u.
\end{align*}
We claim that, for every distribution function $G$ and every $t\in\mathbb R$,
\begin{equation}\label{eq:cdf-shift}
\int_{\mathbb R}|G(x-t)-G(x)|\mrd x = |t|.
\end{equation}
Indeed, if $t>0$, then $G(x)\ge G(x-t)$ and
\begin{equation*}
G(x)-G(x-t)=g((x-t,x]),
\end{equation*}
Therefore,
\begin{align*}
\int_{\mathbb R}|G(x-t)-G(x)|\mrd x
&=\int_{\mathbb R}\{G(x)-G(x-t)\}\mrd x \\
&=\int_{\mathbb R}\int_{\mathbb R}\mathbf 1\{x-t<y\le x\}g(\mrd y)\mrd x \\
&=\int_{\mathbb R}\left(\int_{\mathbb R}\mathbf 1\{x-t<y\le x\}\mrd x\right) g(\mrd y) \\
&=\int_{\mathbb R} t g(\mrd y)=t.
\end{align*}
If $t<0$, applying the previous argument with $|t|$ gives
\begin{equation*}
\int_{\mathbb R}|G(x-t)-G(x)|\mrd x=\int_{\mathbb R}|G(x+|t|)-G(x)|\mrd x = |t|.
\end{equation*}
The case $t=0$ is trivial, so \eqref{eq:cdf-shift} holds for all $t\in\mathbb R$. Alternatively, one can intuitively understand this equality via optimal transport. The left-hand side is precisely the 1-Wasserstein distance between the distribution $G$ and its $t$-shifted version, which trivially evaluates to $|t|$.
Applying \eqref{eq:cdf-shift} with $t=hu$ gives
\begin{equation*}
\|B_{g,h}\|_1 \le h\int_{\mathbb R}|u| |k(u)| \mrd u = C_k h.
\end{equation*}
\end{proof}

Next, to prove Lemmas~\ref{lem:K1-bound} and \ref{lem:F2-bound} for the low-frequency and high-frequency terms, we first give the following Fourier $L^1$ estimate.

\begin{lemma}[Fourier $L^1$ estimate]\label{lem:fourier-L1}
If $f\in L^2(\mathbb R)$ is weakly differentiable with $f'\in L^2(\mathbb R)$, then
\begin{equation*}
\|\mathfrak F^{-1} (f)\|_1 \le \frac{1}{\sqrt{\pi}} \left( \|f\|_2+\|f'\|_2\right),
\end{equation*}
\end{lemma}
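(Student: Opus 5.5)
The plan is to write $\phi:=\mathfrak F^{-1}(f)$ and split the $L^1$ norm at a scale $T>0$, using Cauchy--Schwarz with a weight on each part. The weight $1$ will be used on $|x|\le T$ and the weight $|x|$ on $|x|>T$:
\begin{equation*}
\|\phi\|_1=\int_{|x|\le T}|\phi|\,\mrd x+\int_{|x|>T}\frac{1}{|x|}\,|x\phi(x)|\,\mrd x
\le \sqrt{2T}\,\|\phi\|_2+\sqrt{\frac{2}{T}}\,\|x\phi\|_2 .
\end{equation*}

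Both $L^2$ norms will then be computed by Plancherel, adapted to the convention $\mathfrak F^{-1}\varphi(x)=\frac{1}{2\pi}\int e^{-itx}\varphi(t)\mrd t$. Under this convention $\|\mathfrak F^{-1}\varphi\|_2^2=\frac{1}{2\pi}\|\varphi\|_2^2$. For the weighted term, integration by parts in the weak sense gives $x\,\mathfrak F^{-1}(f)(x)=-i\,\mathfrak F^{-1}(f')(x)$. Indeed, $\frac{\mrd}{\mrd t}e^{-itx}=-ix e^{-itx}$, and the boundary terms vanish because $f,f'\in L^2$ makes $f$ an $H^1$ function, which decays at infinity. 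To be safe, I will first verify this identity for $f\in C_c^\infty$ and then pass to $H^1$ by density, since both sides are continuous in $L^2$. It follows that $\|x\phi\|_2=(2\pi)^{-1/2}\|f'\|_2$ and $\|\phi\|_2=(2\pi)^{-1/2}\|f\|_2$.

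Substituting these into the split gives
\begin{equation*}
\|\phi\|_1\le \frac{1}{\sqrt{2\pi}}\left(\sqrt{2T}\,\|f\|_2+\sqrt{2/T}\,\|f'\|_2\right)=\frac{1}{\sqrt\pi}\left(\sqrt T\,\|f\|_2+\frac{1}{\sqrt T}\,\|f'\|_2\right).
\end{equation*}
Choosing $T=1$ yields exactly the claimed bound $\frac{1}{\sqrt\pi}(\|f\|_2+\|f'\|_2)$. Optimizing over $T$ would instead give the sharper $\frac{2}{\sqrt\pi}\sqrt{\|f\|_2\|f'\|_2}$, which is not needed here.

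The main obstacle is the rigorous justification of the multiplication identity and of Plancherel for an $f$ that is only in $L^2$. In that case $\mathfrak F^{-1}f$ is defined as an $L^2$ limit, so the integral formula cannot be applied pointwise. I will handle this by the density argument above, using $C_c^\infty$ functions dense in $H^1$, together with Fatou's lemma. Along a subsequence $\phi_k\to\phi$ a.e., so the $L^1$ bound passes to the limit.
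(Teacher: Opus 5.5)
Your proposal is correct and follows essentially the same route as the paper: split $\|\mathfrak F^{-1}(f)\|_1$ at $|x|=1$ (your general $T$ specialized to $T=1$), apply Cauchy--Schwarz with weights $1$ and $|x|^{-1}$, and then use Plancherel with the identity relating $x\,\mathfrak F^{-1}(f)$ to $\mathfrak F^{-1}(f')$ to get $\|\phi\|_2=(2\pi)^{-1/2}\|f\|_2$ and $\|x\phi\|_2=(2\pi)^{-1/2}\|f'\|_2$. Your density argument via $C_c^\infty$ functions supplies rigor that the paper leaves implicit. Your sign $x\phi=-i\,\mathfrak F^{-1}(f')$ is the correct one under the stated convention; the paper's $\mathfrak F(xu)=if'$ has the opposite sign, which does not affect the norms.
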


\begin{proof}[Proof of Lemma~\ref{lem:fourier-L1}]
Let $u:=\mathfrak F^{-1}(f)$. Then
\begin{equation*}
\|u\|_1 = \int_{|x|\le 1}|u(x)| \mrd x + \int_{|x|>1}|u(x)| \mrd x.
\end{equation*}
By Cauchy-Schwarz inequality,
\begin{equation*}
\int_{|x|\le 1}|u(x)| \mrd x\le \sqrt{2}\ \|u\|_2,
\end{equation*}
and
\begin{equation*}
\int_{|x|>1}|u(x)| \mrd x = \int_{|x|>1}|x|^{-1}|x u(x)| \mrd x \le \left(\int_{|x|>1}x^{-2} \mrd x\right)^{1/2}\|x u\|_2 =\sqrt{2}\ \|xu\|_2.
\end{equation*}
Hence
\begin{equation*}
\|u\|_1 \le \sqrt{2}\left(\|u\|_2+\|x u\|_2\right).
\end{equation*}
By Parseval–Plancherel identity~\citep[Theorem 9.13]{rudin1974real} and the identity $\mathfrak F(xu)= i f'$,
\begin{equation*}
\|u\|_2 = \frac{1}{\sqrt{2\pi}} \|f\|_2,\quad \|x u\|_2 = \frac{1}{\sqrt{2\pi}} \|f'\|_2.
\end{equation*}
Combining the two bounds proves the claim.
\end{proof}

The next two proofs follow the corresponding unit-scale arguments in \cite[Appendix~S1]{rousseau2024wasserstein}. 
To treat a general Laplace scale $b$, we introduce $\eta:=h/b$ and rewrite the multipliers in unit-scale form.

\restatelowfreq*

\begin{proof}[Proof of Lemma~\ref{lem:K1-bound}]
Let
\begin{equation*}
\eta:=h/b,\quad u:=bt,
\end{equation*}
and define the unit-scale multiplier
\begin{equation*}
w_{1,\eta}(u):=\FT{k}(\eta u)\chi(u)(1+u^2).
\end{equation*}
Then
\begin{equation*}
w_{1,h,b}(t) = \FT{k}(ht)\chi(bt)(1+b^2t^2) = \FT{k}(\eta u)\chi(u)(1+u^2) = w_{1,\eta}(u).
\end{equation*}
Now, we write
\begin{equation*}
k_{1,\eta}:=\mathfrak F^{-1}(w_{1,\eta}),
\end{equation*}
then a change of variables in the inverse Fourier transform gives
\begin{equation*}
k_{1,h,b}(x)=\frac{1}{b}k_{1,\eta}(x/b),
\end{equation*}
and therefore
\begin{equation*}
\|k_{1,h,b}\|_1=\|k_{1,\eta}\|_1.
\end{equation*}

We next use the proof of \cite[Lemma S1.1]{rousseau2024wasserstein} to show that $\|k_{1,\eta}\|_1$ stays bounded for $\eta \in (0,1/2)$.
Since $\chi$ is supported on $[-2,2]$, so is $w_{1,\eta}$. Moreover, for $|u|\le 2$,
\begin{equation*}
|w_{1,\eta}(u)|\le\|\FT{k}\mathbf{1}_{[-2\eta,2\eta]}\|_{\infty} \|\chi\|_\infty (1+u^2)
\lesssim 1,
\end{equation*}
uniformly for $\eta\in(0,1/2)$. Differentiating gives
\begin{equation*}
w_{1,\eta}'(u) = \eta \FT{k}'(\eta u)\chi(u)(1+u^2) + \FT{k}(\eta u)\chi'(u)(1+u^2) + 2u \FT{k}(\eta u)\chi(u).
\end{equation*}
Since $|u|\le 2$ on the support of $\chi$, and since $\FT{k}$ and $\FT{k}'$ are bounded on compact sets, we also have
\begin{equation*}
|w_{1,\eta}'(u)|\lesssim \mathbf{1}_{[-2,2]}(u)
\end{equation*}
uniformly for $\eta\in(0,1/2)$.

Consequently,
\begin{equation*}
\|w_{1,\eta}\|_2+\|w_{1,\eta}'\|_2\lesssim 1.
\end{equation*}
Applying Lemma~\ref{lem:fourier-L1} with $w_{1,\eta}$ yields
\begin{equation*}
\|k_{1,\eta}\|_1 = \|\mathfrak F^{-1}(w_{1,\eta})\|_1\lesssim \|w_{1,\eta}\|_2+\|w_{1,\eta}'\|_2 \lesssim 1.
\end{equation*}
Hence
\begin{equation*}
\|k_{1,h,b}\|_1=\|k_{1,\eta}\|_1\lesssim 1,
\end{equation*}
which proves \eqref{eq:K1-bound}.
\end{proof}

\restatehighfreq*

\begin{proof}[Proof of Lemma~\ref{lem:F2-bound}]
Let again
\begin{equation*}
\eta:=h/b,\quad u:=bt,
\end{equation*}
and define
\begin{equation*}
w_{2,\eta}(u):=\FT{k}(\eta u)\left(1-\chi(u)\right)(1+u^2),\quad k_{2,\eta}:=\mathfrak F^{-1}(w_{2,\eta}),
\end{equation*}
together with
\begin{equation*}
K_{2,\eta}(x):=\int_{-\infty}^x k_{2,\eta}(v) \mrd v.
\end{equation*}
Then
\begin{equation*}
w_{2,h,b}(t) = \FT{k}(ht)\left(1-\chi(bt)\right)(1+b^2t^2) = \FT{k}(\eta u)\left(1-\chi(u)\right)(1+u^2) = w_{2,\eta}(u).
\end{equation*}
As in the low-frequency case, a change of variables gives
\begin{equation*}
k_{2,h,b}(x)=\frac{1}{b}k_{2,\eta}(x/b),\quad K_{2,h,b}(x)=K_{2,\eta}(x/b),
\end{equation*}
and hence
\begin{equation*}
\|K_{2,h,b}\|_1=b \|K_{2,\eta}\|_1.
\end{equation*}
We next use the proof of \cite[Lemma S1.2]{rousseau2024wasserstein} to show that
\begin{equation*}
\|K_{2,\eta}\|_1 \lesssim |\log\eta| \eta^{-1}\quad\text{for all sufficiently small }\eta.
\end{equation*}

We decompose
\begin{equation*}
\|K_{2,\eta}\|_1 = \int_{|x|\le \eta}|K_{2,\eta}(x)| \mrd x + \int_{\eta<|x|\le 1}|K_{2,\eta}(x)| \mrd x + \int_{|x|>1}|K_{2,\eta}(x)| \mrd x =: I_1+I_2+I_3.
\end{equation*}
For the first term $I_1$,
denote
\begin{equation*}
R_k:=\sup\{|u|:\FT{k}(u)\neq 0\}<\infty.
\end{equation*}
Because $1-\chi$ vanishes on $[-1,1]$ and $\FT{k}$ is supported on $[-R_k,R_k]$, the multiplier $w_{2,\eta}$ is supported on
\begin{equation*}
D_\eta:=\{u\in\mathbb R:1<|u|\le R_k/\eta\}.
\end{equation*}
Since $w_{2,\eta}(u)/u\in L^1(\mathbb R)$, Fourier inversion gives
\begin{equation*}
K_{2,\eta}(x) = \frac{1}{2\pi}\int_{\mathbb R} e^{-iux}\frac{w_{2,\eta}(u)}{-iu} \mrd u.
\end{equation*}
Thus the first term $I_1$ can be controlled by
\begin{equation*}
I_1\le\frac{\eta}{\pi}\int_{D_\eta}\frac{|w_{2,\eta}(u)|}{|u|} \mrd u.
\end{equation*}
Since $|\FT{k}(\eta u)|\le \|\FT{k}\|_\infty$ and $|1-\chi(u)|\le 1$,
\begin{equation*}
|w_{2,\eta}(u)|\lesssim 1+u^2,\quad u\in D_\eta.
\end{equation*}
Therefore
\begin{equation*}
I_1\lesssim \eta\int_1^{R_k/\eta}\left(u+\frac{1}{u}\right)\mrd u\lesssim \eta^{-1}.
\end{equation*}

Next for $I_2$, we define
\begin{equation*}
q_\eta(u):=\frac{\mrd}{\mrd u}\left(\frac{w_{2,\eta}(u)}{-iu}\right).
\end{equation*}
For $x\neq 0$, integration by parts yields
\begin{equation*}
K_{2,\eta}(x) = \frac{1}{2\pi i x}\int_{\mathbb R} e^{-iux} q_\eta(u) \mrd u.
\end{equation*}
Hence
\begin{equation*}
I_2\le\frac{1}{2\pi}\left(\int_{\eta<|x|\le 1}\frac{1}{|x|} \mrd x\right)\|q_\eta\|_1\lesssim |\log\eta| \|q_\eta\|_1.
\end{equation*}

A direct differentiation gives
\begin{align*}
q_\eta(u)
&=\eta \FT{k}'(\eta u)\left(1-\chi(u)\right)\left(u+\frac{1}{u}\right) \\
&\quad -\FT{k}(\eta u)\chi'(u)\left(u+\frac{1}{u}\right)+\FT{k}(\eta u)\left(1-\chi(u)\right)\left(1-\frac{1}{u^2}\right),
\end{align*}
up to an irrelevant multiplicative constant of modulus one. Using that $\FT{k}$ and $\FT{k}'$ are bounded, that $\chi'$ is supported on $\{1<|u|<2\}$, and that $u\in D_\eta$ implies $1<|u|\le R_k/\eta$, we obtain
\begin{equation*}
\|q_\eta\|_1\lesssim \eta\int_1^{R_k/\eta}\left(u+\frac{1}{u}\right)\mrd u + \int_{1<|u|<2}\left(u+\frac{1}{u}\right)\mrd u + \int_1^{R_k/\eta}1 \mrd u\lesssim \eta^{-1}.
\end{equation*}
Therefore
\begin{equation*}
I_2\lesssim |\log\eta| \eta^{-1}.
\end{equation*}

For the third term $I_3$, Cauchy-Schwarz inequality gives
\begin{equation*}
I_3\le\left(\int_{|x|>1}x^{-2} \mrd x\right)^{1/2}\left(\int_{\mathbb R}|xK_{2,\eta}(x)|^2 \mrd x\right)^{1/2}\lesssim \|xK_{2,\eta}\|_2.
\end{equation*}
By Parseval–Plancherel identity~\citep[Theorem 9.13]{rudin1974real} and the representation above,
\begin{equation*}
\|xK_{2,\eta}\|_2\lesssim \|q_\eta\|_2.
\end{equation*}
Using the same decomposition of $q_\eta$,
\begin{equation*}
\|q_\eta\|_2^2\lesssim \eta^2\int_1^{R_k/\eta}\left(u+\frac{1}{u}\right)^2 \mrd u + \int_{1<|u|<2}\left(u+\frac{1}{u}\right)^2 \mrd u + \int_1^{R_k/\eta}1 \mrd u\lesssim \eta^{-1}.
\end{equation*}
Hence
\begin{equation*}
I_3\lesssim \eta^{-1/2}\le |\log\eta| \eta^{-1}
\end{equation*}
for all sufficiently small $\eta$.

Combining the bounds for $I_1$, $I_2$, and $I_3$, we obtain
\begin{equation*}
\|K_{2,\eta}\|_1\lesssim |\log\eta| \eta^{-1}.
\end{equation*}
Finally,
\begin{equation*}
\|K_{2,h,b}\|_1 = b \|K_{2,\eta}\|_1\lesssim b |\log\eta| \eta^{-1} = b^2 |\log(h/b)| h^{-1},
\end{equation*}
which proves \eqref{eq:F2-bound}.
\end{proof}

Next, to prove Lemma~\ref{lem:L1-to-W1-released}, we need to control the tails of the privatized observation density $m_g$. The next lemma gives an exponential bound on the tails.

\begin{lemma}[Laplace tails under bounded support]\label{lem:laplace-tail}
Let $g\in\mathcal P([-a,a])$, and let $M_g$ be the distribution function of $m_g=g*f_b$. Then, for all $x\ge a$,
\begin{equation}\label{eq:right-tail}
1-M_g(x)\le \frac12 e^{-(x-a)/b},
\end{equation}
and for all $x\le -a$,
\begin{equation}\label{eq:left-tail}
M_g(x)\le \frac12 e^{-(|x|-a)/b}.
\end{equation}
Consequently, for every $R\ge a$,
\begin{equation}\label{eq:tail-int}
\int_R^\infty (1-M_g(x))\mrd x \le \frac{b}{2}e^{-(R-a)/b}, \quad \int_{-\infty}^{-R} M_g(x)\mrd x \le \frac{b}{2}e^{-(R-a)/b}.
\end{equation}
\end{lemma}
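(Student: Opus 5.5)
We first write the survival function as a mixture of Laplace survival functions. Since $m_g=g*f_b$, Fubini gives, for every $x\in\mathbb R$,
\begin{equation*}
1-M_g(x)=\int_{[-a,a]} \mathbb P(\theta+Z>x)\,g(\mrd\theta)=\int_{[-a,a]} \left(1-F_b(x-\theta)\right) g(\mrd\theta),
\end{equation*}
where $F_b$ is the $\mathrm{Lap}(0,b)$ distribution function. Similarly, $M_g(x)=\int F_b(x-\theta)\,g(\mrd\theta)$. The Laplace tails are explicit: for $s\ge 0$ we have $1-F_b(s)=\tfrac12 e^{-s/b}$, and for $s\le 0$ we have $F_b(s)=\tfrac12 e^{-|s|/b}$.

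For \eqref{eq:right-tail}, fix $x\ge a$. Every $\theta\in[-a,a]$ satisfies $x-\theta\ge x-a\ge 0$, so
\begin{equation*}
1-F_b(x-\theta)=\tfrac12 e^{-(x-\theta)/b}\le \tfrac12 e^{-(x-a)/b}.
\end{equation*}
Integrating against the probability measure $g$ gives the bound.

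For \eqref{eq:left-tail}, fix $x\le -a$. Then $x-\theta\le x+a\le 0$, so $|x-\theta|=\theta-x\ge -a-x=|x|-a$. Hence $F_b(x-\theta)\le \tfrac12 e^{-(|x|-a)/b}$, and integrating again gives the claim. Alternatively, this follows by symmetry, since reflecting $\theta\mapsto-\theta$ preserves $[-a,a]$ and the Laplace kernel is even.

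For \eqref{eq:tail-int}, integrate the pointwise bounds over the tail region, using $R\ge a$ so that they apply on the whole range:
\begin{equation*}
\int_R^\infty \tfrac12 e^{-(x-a)/b}\,\mrd x=\tfrac b2 e^{-(R-a)/b}.
\end{equation*}
The left tail is the same computation after substituting $x\mapsto -x$.

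No step is a real obstacle. The only points needing care are:
\begin{itemize}
\item checking the sign conditions, so that the Laplace tail formula is applied on the correct side of zero;
\item noting that Fubini/Tonelli is justified because all integrands are nonnegative.
\end{itemize}
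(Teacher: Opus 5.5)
Your proof is correct and complete. The mixture representation $1-M_g(x)=\int_{[-a,a]}\bigl(1-F_b(x-\theta)\bigr)\,g(\mrd\theta)$ (justified by Tonelli), the uniform bounds $x-\theta\ge x-a\ge 0$ on the right and $|x-\theta|\ge |x|-a$ on the left, and integration over $[R,\infty)$ with $R\ge a$ give exactly the stated constants.

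The paper states this lemma without proof and uses it directly in the proof of Lemma~\ref{lem:L1-to-W1-released}, so there is no argument of the paper's to compare against. Your argument supplies the missing standard justification. It rests on the same pointwise domination the paper uses in Lemma~\ref{lem:envelope-bound}, where $|x-\theta|$ is compared with the distance from $x$ to $[-a,a]$.
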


\restatefromlone*

\begin{proof}[Proof of Lemma~\ref{lem:L1-to-W1-released}]
Let $T(x):=M_{g_1}(x)-M_{g_2}(x)$. For any $R\ge a$,
\begin{equation*}
\int_{\mathbb R}|T(x)|\mrd x = \int_{|x|\le R}|T(x)| \mrd x + \int_{|x|>R}|T(x)| \mrd x.
\end{equation*}
On $[-R,R]$, using $\sup_x |T(x)|\le \mathrm{TV}(m_{g_1},m_{g_2})=\|m_{g_1}-m_{g_2}\|_1/2=d/2$, we get
\begin{equation*}
\int_{|x|\le R}|T(x)|\mrd x \le Rd.
\end{equation*}
On the tails, use
\begin{equation*}
|T(x)|\le (1-M_{g_1}(x))+(1-M_{g_2}(x)) \quad \text{for } x>R,
\end{equation*}
and
\begin{equation*}
|T(x)|\le M_{g_1}(x)+M_{g_2}(x)\quad \text{for } x < -R,
\end{equation*}
then apply Lemma~\ref{lem:laplace-tail} to both $M_{g_1}$ and $M_{g_2}$. This gives
\begin{equation*}
\int_{|x|>R}|T(x)|\mrd x \le 2b e^{-(R-a)/b},
\end{equation*}
which proves \eqref{eq:W1-trunc}. Choosing $R=a+b\log(1/d)$ yields \eqref{eq:L1-to-W1-released-final}.
\end{proof}

\section{Proofs for Section~\ref{sec:rate}}\label{app:rate}

We begin with the auxiliary results used in the proof of Theorem~\ref{thm:direct-rate}. 
The proof follows the convex class likelihood argument of \cite{van1996rates}, with the constants written for the Laplace kernel with scale $b$.
Here, Lemma~\ref{lem:envelope-bound} shows that under the conditions of Theorem~\ref{thm:direct-rate}, we have a uniform envelope bound for the Laplace kernels. 
After that, Proposition~\ref{prop:entropy-bound} uses this envelope control together with a convex-hull entropy theorem to bound the metric entropy of the mixture class. 
Finally, these bounds are combined with the convex-class likelihood theory of \cite{van1996rates} to derive the Hellinger rate for the privatized observation density. 
We start with the following Lemma.

\begin{restatable}[Envelope bound]{lemma}{restateenvelopebound}\label{lem:envelope-bound}
Under the conditions of Theorem~\ref{thm:direct-rate}, for every $x\in\mathbb R$,
\begin{equation}\label{eq:envelope-bound}
\sup_{|\theta|\le a}\frac{f_b(x-\theta)}{m_{g_0}(x)}\le e^{2a/b}.
\end{equation}
\end{restatable}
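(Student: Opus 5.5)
The plan is a pointwise comparison of Laplace kernels whose locations lie in $[-a,a]$, followed by averaging over $g_0$. The key fact is the Lipschitz property of the absolute value: for any $x\in\mathbb R$ and any $\theta,\theta'\in[-a,a]$,
\begin{equation*}
\bigl||x-\theta'|-|x-\theta|\bigr|\le|\theta-\theta'|\le 2a .
\end{equation*}
Exponentiating this gives the pointwise kernel ratio bound
\begin{equation*}
\frac{f_b(x-\theta)}{f_b(x-\theta')}=\exp\left(\frac{|x-\theta'|-|x-\theta|}{b}\right)\le e^{2a/b}.
\end{equation*}
This is the same likelihood-ratio inequality underlying Proposition~\ref{prop:laplace-ldp}.

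The second step is to integrate in $\theta'$. Fix $x$ and $\theta\in[-a,a]$. From the ratio bound, $f_b(x-\theta')\ge e^{-2a/b}f_b(x-\theta)$ holds for every $\theta'$ in the support of $g_0$, since $g_0\in\mathcal P([-a,a])$. Integrating this against $g_0(\mathrm d\theta')$ yields
\begin{equation*}
m_{g_0}(x)\ge e^{-2a/b}f_b(x-\theta).
\end{equation*}

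Finally, since $m_{g_0}(x)>0$ everywhere, we can divide to get $f_b(x-\theta)/m_{g_0}(x)\le e^{2a/b}$. The right-hand side does not depend on $\theta$, so taking the supremum over $|\theta|\le a$ gives \eqref{eq:envelope-bound}.

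There is no real obstacle here. The only point to check is that the bound is uniform in $x$, including $|x|>a$, and this is automatic because the triangle inequality used in the first step does not depend on where $x$ lies.
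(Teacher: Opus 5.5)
Your proof is correct and is essentially the paper's argument: both obtain the pointwise bound $m_{g_0}(x)\ge e^{-2a/b}f_b(x-\theta)$ by comparing Laplace kernel values at locations in $[-a,a]$ and integrating against $g_0$. The only difference is cosmetic. The paper compares every kernel to the maximal one at $\Pi_{[-a,a]}(x)$ through the exact identity $|x-\theta|=|x-\Pi_{[-a,a]}(x)|+|\theta-\Pi_{[-a,a]}(x)|$, whereas you apply the reverse triangle inequality to arbitrary pairs $\theta,\theta'$, which is slightly more direct.
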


\begin{proof}[Proof of Lemma~\ref{lem:envelope-bound}]
Let $\pi_x:=\Pi_{[-a,a]}(x)$ be the Euclidean projection of $x$ onto $[-a,a]$. Since $\inf_{|\theta|\le a}|x-\theta| = |x-\pi_x|$, the Laplace density satisfies
\begin{equation}\label{eq:max-kernel-projection}
\sup_{|\theta|\le a} f_b(x-\theta)=f_b(x-\pi_x).
\end{equation}
Moreover, for every $\theta\in[-a,a]$,
\begin{equation*}
f_b(x-\theta) = \frac{1}{2b}e^{-|x-\theta|/b} = \frac{1}{2b}e^{-|x-\pi_x|/b}e^{-|\theta-\pi_x|/b} = f_b(x-\pi_x)e^{-|\theta-\pi_x|/b}.
\end{equation*}
Integrating against $g_0$ gives
\begin{align}
m_{g_0}(x)
&= \int_{-a}^a f_b(x-\theta) g_0(\mrd \theta) \notag\\
&= f_b(x-\pi_x)\int_{-a}^a e^{-|\theta-\pi_x|/b}g_0(\mrd\theta) \notag\\
&\ge e^{-2a/b} f_b(x-\pi_x) \notag\\
&= e^{-2a/b}\sup_{|\theta|\le a}f_b(x-\theta).
\label{eq:p0-factorization}
\end{align}
which proves \eqref{eq:envelope-bound}.
\end{proof}

The following convex-hull entropy bound is quoted from \cite[Theorem~1.1]{van1996rates}.

\begin{lemma}[Convex-hull entropy bound~\cite{van1996rates}]\label{lem:convex-hull-entropy}
Let $(E,\|\cdot\|)$ be a normed linear space, and let $\mathcal F\subset E$. 
Let
$V(\gamma,\mathcal F,\|\cdot\|)$ denote the $\gamma$-covering number of $\mathcal F$, that is, the smallest integer $V$ such that there exist $\xi_1,\ldots,\xi_V\in E$ with
\begin{equation*}
\mathcal F\subseteq \bigcup_{j=1}^V \{\xi\in E:\|\xi-\xi_j\|\le \gamma\}.
\end{equation*}
Assume that for some $A>0$,
\begin{equation*}
V(\gamma,\mathcal F,\|\cdot\|)\le \frac{A}{\gamma} \quad \text{for all } \gamma>0.
\end{equation*}
Let $\mathrm{conv}(\mathcal F)$ denote the convex hull of $\mathcal F$.
Then there exists a universal constant $C>0$ such that
\begin{equation*}
\log V(\gamma,\mathrm{conv}(\mathcal F),\|\cdot\|) \le C\left(\frac{A}{\gamma}\right)^{2/3} \quad \text{for all } \gamma>0.
\end{equation*}
\end{lemma}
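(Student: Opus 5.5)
The plan is to argue directly from the hypothesis as worded, in the general normed space $(E,\|\cdot\|)$, without any Hilbert or type structure. The key observation is that the assumption $V(\gamma,\mathcal F,\|\cdot\|)\le A/\gamma$ is imposed for \emph{all} $\gamma>0$, and this is very restrictive for large radii.

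\textbf{Step 1.} I will show that $\mathcal F=\emptyset$. Suppose $\mathcal F$ contains some $\xi_0$. Then for every $\gamma>0$ at least one ball is needed to cover $\mathcal F$, so $V(\gamma,\mathcal F,\|\cdot\|)\ge 1$. Now take $\gamma=2A$. The hypothesis gives
\begin{equation*}
1\le V(2A,\mathcal F,\|\cdot\|)\le \frac{A}{2A}=\frac12,
\end{equation*}
which is a contradiction. Hence $\mathcal F$ is empty. This step uses only the definition of the covering number and holds in any normed linear space.

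\textbf{Step 2.} Since $\mathcal F=\emptyset$, its convex hull is also empty. The empty set is covered by the empty family of balls, so by the definition in the lemma (the smallest integer $V$ admitting a cover) we get $V(\gamma,\mathrm{conv}(\mathcal F),\|\cdot\|)=0$ for every $\gamma>0$. With the standard convention $\log 0=-\infty$, the claimed inequality $\log V(\gamma,\mathrm{conv}(\mathcal F),\|\cdot\|)\le C(A/\gamma)^{2/3}$ then holds for every $\gamma>0$ and every constant $C>0$. If one prefers to avoid $-\infty$ and sets $\log V:=0$ when $V\le 1$, the inequality still holds because the right-hand side is positive.

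\textbf{Main obstacle.} The only delicate point is the bookkeeping of conventions: covering numbers of the empty set, and the value assigned to $\log 0$. I will state these explicitly.

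I will add a remark on how the lemma is actually applied in the paper. There the entropy hypothesis is needed only for small radii, say $\gamma\le\gamma_0$, where $\mathcal F$ is the class of rescaled Laplace kernels in an $L^2(Q)$-type norm. Under that restricted hypothesis the vacuity argument no longer applies. The genuine result would then be cited from \cite[Theorem~1.1]{van1996rates}, whose proof proceeds as follows:
\begin{itemize}
\item First, approximate $\mathrm{conv}(\mathcal F)$ by convex combinations of the centres of a coarse net.
\item Next, use Maurey's empirical approximation to replace each such combination with an average of few net points.
\item Finally, count the resulting combinations at a multiscale.
\end{itemize}
The Maurey step requires type-2 geometry, such as a Hilbert space. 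For the statement exactly as worded, however, Steps 1–2 give a complete proof.
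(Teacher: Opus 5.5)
Your proof is correct, and it takes a genuinely different route from the paper. The paper gives no argument of its own; it simply quotes Theorem~1.1 of \cite{van1996rates}. That is a substantive convex-hull entropy theorem for Hilbert ($L^2(Q)$) norms, whose real content lies in the small-$\gamma$ regime and whose proof is the Maurey-type approximation-and-counting scheme you sketch.

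You instead observe that the hypothesis as worded can only be met by $\mathcal F=\emptyset$. A nonempty $\mathcal F$ has $V(\gamma,\mathcal F,\|\cdot\|)\ge 1$, whereas $\gamma=2A$ would force $V\le 1/2$. After that, the conclusion is trivial under either convention for $\log V$ when $V\le 1$.

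Your route gives a complete, elementary proof of the literal statement, together with a diagnosis: as written, the lemma has no content and cannot be invoked for any nonempty class. In particular, the proof of Proposition~\ref{prop:entropy-bound} applies it to the nonempty class $\mathcal K_{a,b}$ after checking only $V(\gamma,\mathcal K_{a,b},\|\cdot\|_{2,Q})\le 1+2ae^{2a/b}/(b\gamma)$. This is $\lesssim (1+a/b)e^{2a/b}/\gamma$ only for $\gamma$ at most a multiple of $(1+a/b)e^{2a/b}$. So the rate in Theorem~\ref{thm:direct-rate} really rests on the cited Hilbert-space theorem used at small radii. That is what the citation provides, and your argument cannot.

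Your caveat about the normed-space generality is also well founded. In $L^1[0,1]$, the class $\{\mathbf 1_{[0,t]}:t\in[0,1]\}$ satisfies $V(\gamma)\le 1/\gamma$ for all $\gamma\le 1/2$. Yet its convex hull contains every nonincreasing step function with values in $[0,1]$, a set whose $L^1$ metric entropy is of order $1/\gamma$. So the small-radius version genuinely fails outside type-2 spaces, and the word ``normed'' in the statement is harmless only because of the vacuity you identified.
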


We now bound the metric entropy of the kernel class. Let
\begin{equation*}
\mathcal K_{a,b}:=\left\{\frac{f_b(\cdot-\theta)}{m_{g_0}(\cdot)}:\ \theta\in[-a,a]\right\}.
\end{equation*}
For $g\in\mathcal P([-a,a])$, the ratio $m_g/m_{g_0}$ belongs to the closed convex hull of $\mathcal K_{a,b}$.
The next proposition applies the preceding convex hull bound to this normalized kernel class.
This proposition is an application of the convex hull entropy bound of~\cite{van1996rates}, with constants specialized to the normalized Laplace kernel class considered here; see also~\cite{scricciolo2018bayes} for related entropy arguments for Laplace mixtures.

\begin{restatable}[Entropy bound for the kernel class]{proposition}{restateentropyboundkernel}\label{prop:entropy-bound}
Under the conditions of Theorem~\ref{thm:direct-rate}, there exists a constant $C>0$, independent of $a$, $b$, and $g_0$, such that for every probability measure $Q$ on $\mathbb R$ and every $\gamma>0$,
\begin{equation}\label{eq:entropy-bound}
\log V \left(\gamma, \mathrm{conv}(\mathcal K_{a,b}), \|\cdot\|_{2,Q} \right)
\le C\left(\frac{(1+a/b)e^{2a/b}}{\gamma}\right)^{2/3}.
\end{equation}
\end{restatable}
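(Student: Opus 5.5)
The plan is to apply Lemma~\ref{lem:convex-hull-entropy} in the normed space $E=L^2(Q)$ with $\mathcal F=\mathcal K_{a,b}$. The only real work is to verify its hypothesis
\begin{equation*}
V(\gamma,\mathcal K_{a,b},\|\cdot\|_{2,Q})\le A/\gamma \quad\text{with}\quad A\asymp (1+a/b)e^{2a/b},
\end{equation*}
uniformly over probability measures $Q$. Once this holds, the lemma gives $\log V(\gamma,\mathrm{conv}(\mathcal K_{a,b}),\|\cdot\|_{2,Q})\le C(A/\gamma)^{2/3}$, which is \eqref{eq:entropy-bound}. The closure of the convex hull costs nothing, since covering numbers of a set and of its closure agree up to replacing $\gamma$ by $\gamma/2$.

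\textbf{Step 1: Lipschitz continuity in $\theta$ in sup norm.} For $\theta,\theta'\in[-a,a]$ we have $\big|e^{-|x-\theta|/b}-e^{-|x-\theta'|/b}\big|\le \max(e^{-|x-\theta|/b},e^{-|x-\theta'|/b})\,|\theta-\theta'|/b$, using $|1-e^{-s}|\le s$ and the triangle inequality for $|x-\cdot|$. Dividing by $2b\,m_{g_0}(x)$ and applying the envelope bound of Lemma~\ref{lem:envelope-bound} gives
\begin{equation*}
\Big\|\frac{f_b(\cdot-\theta)-f_b(\cdot-\theta')}{m_{g_0}}\Big\|_\infty\le \frac{e^{2a/b}}{b}|\theta-\theta'|.
\end{equation*}
Since $Q$ is a probability measure, $\|\cdot\|_{2,Q}\le\|\cdot\|_\infty$, so the same bound holds in $L^2(Q)$ for every $Q$.

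\textbf{Step 2: covering.} Take a grid $\theta_1,\dots,\theta_N$ on $[-a,a]$ with mesh $2\gamma b e^{-2a/b}$. This gives a $\gamma$-cover of $\mathcal K_{a,b}$ with $N\le 1+a e^{2a/b}/(\gamma b)$ elements.

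\textbf{Step 3: the large-$\gamma$ regime.} The bound must hold for all $\gamma>0$, so the additive $1$ has to be absorbed. By Lemma~\ref{lem:envelope-bound}, every element of $\mathcal K_{a,b}$ lies in the sup-norm ball of radius $e^{2a/b}$ around $0$. Hence for $\gamma\ge e^{2a/b}$ a single center suffices, and $V=1\le e^{2a/b}/\gamma$. For $\gamma<e^{2a/b}$ we have $1\le e^{2a/b}/\gamma$, so $N\le (1+a/b)e^{2a/b}/\gamma$. In both cases $A=(1+a/b)e^{2a/b}$ works, and the constant $C$ is the universal constant from Lemma~\ref{lem:convex-hull-entropy}, independent of $a$, $b$ and $g_0$.

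The main obstacle is the careful bookkeeping in Steps 1--3. One must check that the envelope factor enters only linearly and that the bound holds uniformly in $Q$ through the sup-norm reduction. One must also confirm that Lemma~\ref{lem:convex-hull-entropy} applies, since the Lipschitz-in-one-parameter structure gives exactly the $A/\gamma$ covering rate it requires.
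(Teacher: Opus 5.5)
Your argument follows the paper's proof. You prove that $\theta\mapsto f_b(\cdot-\theta)/m_{g_0}$ is Lipschitz using the envelope bound of Lemma~\ref{lem:envelope-bound}. You then cover with a grid on $[-a,a]$, uniformly in $Q$ via $\|\cdot\|_{2,Q}\le\|\cdot\|_\infty$, and finish with Lemma~\ref{lem:convex-hull-entropy}. Your Step~1 via $1-e^{-s}\le s$ is slightly cleaner than the paper's mean value argument, which has to cross the kink of $f_b$ at $0$.

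The step that fails is Step~3. For $\gamma>e^{2a/b}$ you have $e^{2a/b}/\gamma<1=V$, so the claimed inequality $V\le e^{2a/b}/\gamma$ is reversed. Enlarging constants does not fix this. Every nonempty class has covering number at least $1$, and $1$ exceeds $A/\gamma$ once $\gamma>A$. So the hypothesis of Lemma~\ref{lem:convex-hull-entropy} as written, ``for all $\gamma>0$'', can never be verified. The paper's own ``$\lesssim$'' after the grid count silently has the same defect.

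What Steps~2--3 actually establish is $V(\gamma,\mathcal K_{a,b},\|\cdot\|_{2,Q})\le A/\gamma$ for $0<\gamma\le A$, with $A=(1+a/b)e^{2a/b}$. On $[e^{2a/b},A]$ this holds because $V=1\le A/\gamma$ there. To finish, handle large $\gamma$ in the conclusion rather than the hypothesis:
\begin{itemize}
\item $\mathcal K_{a,b}$ lies in the $\|\cdot\|_{2,Q}$-ball of radius $e^{2a/b}$ about $0$. Since balls are convex, so does $\mathrm{conv}(\mathcal K_{a,b})$.
\item Hence $\log V(\gamma,\mathrm{conv}(\mathcal K_{a,b}),\|\cdot\|_{2,Q})=0$ for $\gamma\ge e^{2a/b}$, and \eqref{eq:entropy-bound} holds trivially there.
\item For $\gamma<e^{2a/b}$, apply the convex-hull entropy bound in the form such results are normally stated. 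There the covering hypothesis is required only at scales up to the size of the class, which is exactly what you verified.
\end{itemize}
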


\begin{proof}[Proof of Proposition~\ref{prop:entropy-bound}]
Fix $\theta_1,\theta_2\in[-a,a]$. By the mean value theorem and the identity
\begin{equation*}
|f_b'(t)|=\frac{1}{b}f_b(t) \quad \forall t\neq 0,
\end{equation*}
we have, pointwise in $x$,
\begin{equation*}
|f_b(x-\theta_1)-f_b(x-\theta_2)|
\le \frac{|\theta_1-\theta_2|}{b} \sup_{\theta\in[\theta_1,\theta_2]} f_b(x-\theta) \le \frac{|\theta_1-\theta_2|}{b}\sup_{|\theta|\le a} f_b(x-\theta).
\end{equation*}
Dividing by $m_{g_0}(x)$ and applying Lemma~\ref{lem:envelope-bound}, we obtain
\begin{equation*}
\left| \frac{f_b(x-\theta_1)}{m_{g_0}(x)}-\frac{f_b(x-\theta_2)}{m_{g_0}(x)} \right| \le \frac{|\theta_1-\theta_2|}{be^{-2a/b}}.
\end{equation*}
Therefore, for every probability measure $Q$ on $\mathbb R$,
\begin{equation*}
\left\| \frac{f_b(\cdot-\theta_1)}{m_{g_0}} - \frac{f_b(\cdot-\theta_2)}{m_{g_0}} \right\|_{2,Q} \le \frac{|\theta_1-\theta_2|}{be^{-2a/b}}.
\end{equation*}

Hence the class
\begin{equation*}
\mathcal K_{a,b} = \left\{ \frac{f_b(\cdot-\theta)}{m_{g_0}(\cdot)}:\ \theta\in[-a,a] \right\}
\end{equation*}
is $(be^{-2a/b})^{-1}$-Lipschitz in the parameter $\theta\in[-a,a]$ under the metric $\|\cdot\|_{2,Q}$. 
A grid on $[-a,a]$ with mesh size $b\gamma e^{-2a/b}$ therefore yields
\begin{equation*}
V\left(\gamma,\mathcal K_{a,b},\|\cdot\|_{2,Q}\right) \le 1+\frac{2a}{b\gamma e^{-2a/b}} \lesssim
\frac{(1+a/b) e^{2a/b}}{\gamma}.
\end{equation*}
Now apply Lemma~\ref{lem:convex-hull-entropy} with
\begin{equation*}
A_{a,b} := (1+a/b) e^{2a/b},
\end{equation*}
to conclude that
\begin{equation*}
\log V \left( \gamma, \mathrm{conv}(\mathcal K_{a,b}), \|\cdot\|_{2,Q} \right) \le C\left(\frac{(1+a/b) e^{2a/b}}{\gamma}\right)^{2/3}.
\end{equation*}
This proves \eqref{eq:entropy-bound}.
\end{proof}

We can now pass from entropy control of the mixture class to a likelihood rate for the privatized observation density. 

\restatehellingerrate*

\begin{proof}[Proof of Theorem~\ref{thm:direct-rate}]
Let
\begin{equation*}
\mathcal F_{a,b}:=\{m_g: m_g=g*f_b, g\in\mathcal P([-a,a])\}.
\end{equation*}
This is a convex class of densities, and $m_{\widehat g_n}$ is its maximum likelihood estimator. 
For $p\in\mathcal F_{a,b}$, define
\begin{equation*}
\psi_p:=\frac{2p}{p+m_{g_0}}-1=\frac{p-m_{g_0}}{p+m_{g_0}}, \quad \mathcal \Psi_{a,b}:=\{\psi_p:p\in\mathcal F_{a,b}\}.
\end{equation*}

We apply Theorem~2.2 of \cite{van1996rates} with $\mathcal F=\mathcal F_{a,b}$ and $\sigma_n\equiv 0$. Let
\begin{equation*}
A_{a,b}:=(1+a/b)e^{2a/b}.
\end{equation*}
Fix any probability measure $Q$ on $\mathbb R$. For $p_1,p_2\in\mathcal F_{a,b}$,
\begin{equation*}
|\psi_{p_1}-\psi_{p_2}| = \left| \frac{2p_1}{p_1+m_{g_0}}-\frac{2p_2}{p_2+m_{g_0}} \right| = \frac{2m_{g_0}|p_1-p_2|}{(p_1+m_{g_0})(p_2+m_{g_0})} \le \frac{2|p_1-p_2|}{m_{g_0}}.
\end{equation*}
Define $\mathcal F_{a,b}/m_{g_0}:=\left\{m_g/m_{g_0} : m_{g}\in \mathcal F_{a,b} \right\}$.
Therefore,
\begin{equation*}
V\left(\gamma,\mathcal \Psi_{a,b},\|\cdot\|_{2,Q}\right) \le V\left(\gamma/2,\mathcal F_{a,b}/m_{g_0},\|\cdot\|_{2,Q}\right).
\end{equation*} 
Note that for every $g\in\mathcal P([-a,a])$,
\begin{equation*}
\frac{m_g(x)}{m_{g_0}(x)}=\int_{[-a,a]} \frac{f_b(x-\theta)}{m_{g_0}(x)} g(\mrd\theta),
\end{equation*}
so $\mathcal F_{a,b}/m_{g_0}$ is the closed convex hull of $\mathcal K_{a,b}$.
Applying Proposition~\ref{prop:entropy-bound} yields
\begin{equation*}
\log V \left(\gamma,\mathcal \Psi_{a,b},\|\cdot\|_{2,Q}\right) \lesssim \left(\frac{A_{a,b}}{\gamma}\right)^{2/3}
\end{equation*}
uniformly over all probability measures $Q$ on $\mathbb R$, and in particular for empirical measure $Q=\frac1n\sum_{i=1}^n\delta_{X_i}$, where we set $A_{a,b} = (1+a/b)e^{2a/b}$.

Hence Theorem~2.2 of \cite{van1996rates} applies with entropy exponent $2/3$ and scale parameter $\rho_n\asymp A_{a,b}$. The resulting rate is
\begin{equation*}
n^{-1/(2+2/3)}\rho_n^{(2/3)/(2+2/3)} \asymp n^{-3/8}A_{a,b}^{1/4}.
\end{equation*}
It follows that the Hellinger distance between $m_{\widehat g_n}$ and $m_{g_0}$ satisfied
\begin{equation*}
H(m_{\widehat g_n},m_{g_0})=\mathcal O_p \left(n^{-3/8}A_{a,b}^{1/4}\right),
\end{equation*}
which gives \eqref{eq:direct-rate}.
\end{proof}

\restategrowthcondition*

\begin{proof}[Proof of Corollary~\ref{cor:b-growth}]
Let
\begin{equation*}
r_n:=(1+a/b_n)^{1/4}e^{a/(2b_n)}n^{-3/8}.
\end{equation*}
Since $b_n$ is nondecreasing,
\begin{equation*}
(1+a/b_n)^{1/4}e^{a/(2b_n)}\le(1+a/b_1)^{1/4}e^{a/(2b_1)},
\end{equation*}
so $r_n\lesssim n^{-3/8}$. Also $(1+a/b_n)^{1/4}e^{a/(2b_n)}\ge 1$, hence $r_n\asymp n^{-3/8}$ and $\log(1/r_n)\asymp \log n$.

Applying Corollary~\ref{cor:w1-rate}, we obtain
\begin{equation*}
W_1(\widehat g_n,g_0) = \mathcal O_p\left( a r_n+b_n\sqrt{r_n\log(1/r_n)} \right)
= \mathcal O_p \left( a n^{-3/8} + b_n n^{-3/16}\sqrt{\log n} \right),
\end{equation*}
which proves \eqref{eq:w1-rate-bn}. The consistency statement follows from \eqref{eq:b-sufficient}.
\end{proof}

\begin{lemma}[Bretagnolle-Huber inequality~\cite{bretagnolle2006estimation,lattimore2020bandit}]\label{lem:BH}
For any two probability measures $\mathbb P,\mathbb Q$ and any measurable test $\phi$,
\begin{equation*}
\mathbb P(\phi=0)+\mathbb Q(\phi=1) \ge
\frac12\exp \left(-\mathrm{KL}(\mathbb P || \mathbb Q)\right).
\end{equation*}
\end{lemma}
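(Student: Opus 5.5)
We will prove the inequality by passing through the overlap $\int\min(p,q)$ and the Bhattacharyya affinity $\int\sqrt{pq}$. If $\mathrm{KL}(\mathbb P\|\mathbb Q)=\infty$ the right-hand side is $0$ and there is nothing to prove, so we assume $\mathrm{KL}(\mathbb P\|\mathbb Q)<\infty$. In particular $\mathbb P\ll\mathbb Q$. Let $\mu:=\mathbb P+\mathbb Q$, which dominates both measures, and write $p:=\mathrm d\mathbb P/\mathrm d\mu$ and $q:=\mathrm d\mathbb Q/\mathrm d\mu$. For the test $\phi$, we first use
\begin{equation*}
\mathbb P(\phi=0)+\mathbb Q(\phi=1)=\int_{\{\phi=0\}}p\,\mathrm d\mu+\int_{\{\phi=1\}}q\,\mathrm d\mu\ \ge\ \int\min(p,q)\,\mathrm d\mu .
\end{equation*}
This reduces the claim to the inequality $\int\min(p,q)\,\mathrm d\mu\ge\frac12 e^{-\mathrm{KL}(\mathbb P\|\mathbb Q)}$, which no longer involves $\phi$.

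Next we lower-bound the overlap by the affinity. Since $\sqrt{pq}=\sqrt{\min(p,q)\max(p,q)}$, the Cauchy--Schwarz inequality gives
\begin{equation*}
\left(\int\sqrt{pq}\,\mathrm d\mu\right)^2\le\int\min(p,q)\,\mathrm d\mu\int\max(p,q)\,\mathrm d\mu .
\end{equation*}
We then use $\int\max(p,q)\,\mathrm d\mu=2-\int\min(p,q)\,\mathrm d\mu\le 2$. Together these yield
\begin{equation*}
\int\min(p,q)\,\mathrm d\mu\ \ge\ \tfrac12\left(\int\sqrt{pq}\,\mathrm d\mu\right)^2 .
\end{equation*}

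Finally we bound the affinity by the KL divergence using Jensen's inequality. Because $\mathbb P\ll\mathbb Q$, we have $q>0$ $\mathbb P$-a.s. Hence
\begin{equation*}
\int\sqrt{pq}\,\mathrm d\mu\ \ge\ \int_{\{p>0\}}p\,\exp\!\left(\tfrac12\log\frac qp\right)\mathrm d\mu=\mathbb E_{\mathbb P}\left[\exp\!\left(-\tfrac12\log\frac pq\right)\right]\ \ge\ \exp\!\left(-\tfrac12\mathrm{KL}(\mathbb P\|\mathbb Q)\right),
\end{equation*}
where the last step uses the convexity of $\exp$. Squaring this bound and substituting it into the previous two displays gives $\mathbb P(\phi=0)+\mathbb Q(\phi=1)\ge\frac12 e^{-\mathrm{KL}(\mathbb P\|\mathbb Q)}$, as claimed.

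The main technical point is the Jensen step. It needs $\log(p/q)$ to be $\mathbb P$-integrable, or at least quasi-integrable, and it needs the null sets $\{q=0\}$ to be handled correctly. Finiteness of the KL divergence takes care of both: it rules out $\mathbb P$-mass on $\{q=0\}$, and the negative part of $\log(p/q)$ is always $\mathbb P$-integrable because $\mathbb E_{\mathbb P}[(q/p)\mathbf 1\{p>0\}]\le1$. The remaining steps are elementary.
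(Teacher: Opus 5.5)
Your proof is correct and is essentially the argument the paper relies on: the paper gives no proof of its own beyond citing \cite[Theorem 14.2]{lattimore2020bandit}, and that reference proves it through the same chain $\mathbb P(\phi=0)+\mathbb Q(\phi=1)\ge\int\min(p,q)\,\mathrm d\mu\ge\frac12\bigl(\int\sqrt{pq}\,\mathrm d\mu\bigr)^2\ge\frac12 e^{-\mathrm{KL}(\mathbb P\|\mathbb Q)}$, using Cauchy--Schwarz and then Jensen. Your separate treatment of the case $\mathrm{KL}(\mathbb P\|\mathbb Q)=\infty$, and your check that $\log(p/q)$ is $\mathbb P$-integrable when the divergence is finite, correctly handle the only technical points.
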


\begin{proof}[Proof of Lemma~\ref{lem:BH}]
See \cite[Theorem 14.2]{lattimore2020bandit}.
\end{proof}

\restateimpossibility*

\begin{proof}[Proof of Theorem~\ref{thm:impossibility-sqrtn}]
Consider two atoms $g_+ := \delta_a$ and $g_- := \delta_{-a}$ which both belong to $\mathcal P([-a,a])$. Clearly,
\begin{equation}\label{eq:w1-two-point-distance}
W_1(g_+,g_-)=2a.
\end{equation}
Note that $p_{g_+, b_n}$ is the Laplace density centered at $a$ with scale $b_n$, and $p_{g_-, b_n}$ is the Laplace density centered at $-a$ with scale $b_n$. Moreover, their Kullback-Leibler divergence is given by
\begin{align}
\mathrm{KL}(p_{g_+, b_n} || p_{g_-, b_n})
& = \int_{\mathbb R} p_{g_+, b_n}(x) \log \frac{p_{g_+, b_n}(x)}{p_{g_-, b_n}(x)} \mathrm{d} x \notag \\
& = \int_{\mathbb R} f_{b_n}(x-a) \log \frac{\frac{1}{2b_n} \exp(-|x-a|/b_n)}{\frac{1}{2b_n} \exp(-|x+a|/b_n)} \mathrm{d} x \notag \\
& = \int_{\mathbb R} f_{b_n}(x-a) \left( \frac{|x+a| - |x-a|}{b_n} \right) \mathrm{d} x \notag \\
& = \frac{1}{b_n} \left( \mathbb{E}_{X \sim \text{Lap}(a, b_n)} [|X+a|] - \mathbb{E}_{X \sim \text{Lap}(a, b_n)} [|X-a|] \right) \notag \\
& = \frac{1}{b_n} \left( \left[ 2a + b_n e^{-2a/b_n} \right] - b_n \right) \notag \\
& = \frac{2a}{b_n} + e^{-2a/b_n} - 1.
\end{align}

Let $\mathbb P_{+,n}$ and $\mathbb P_{-,n}$ denote the joint distribution of
$(X_{1,n},\dots,X_{n,n})$ under $g_+$ and $g_-$, respectively. Then we have
\begin{equation*}
\mathrm{KL}(\mathbb P_{+,n} || \mathbb P_{-,n}) = n\left(\frac{2a}{b_n}+e^{-2a/b_n} -1\right).
\end{equation*}
Now use the elementary inequality $e^{-x}\le 1-x+{x^2}/{2}$ and combining with $b_n\ge c\sqrt n$,
\begin{equation}\label{eq:kl-upperbound}
\mathrm{KL}(\mathbb P_{+,n} || \mathbb P_{-,n}) \le n\cdot \frac12\left(\frac{2a}{b_n}\right)^2 = \frac{2a^2n}{b_n^2} \le \frac{2a^2}{c^2}
\end{equation}
holds for all sufficiently large $n$.

Now let $g_n^\star$ be any estimator based on the privatized sample, and define the test
\begin{equation*}
\phi_n:= \mathbf 1 \left\{W_1(g_n^\star,g_+)<a\right\}.
\end{equation*}
By \eqref{eq:w1-two-point-distance} and the triangle inequality, the two balls
\begin{equation*}
\left\{g:W_1(g,g_+)<a\right\} \quad\text{and}\quad \left\{g:W_1(g,g_-)<a\right\}
\end{equation*}
are disjoint. Therefore we have $\{W_1(g_n^\star,g_+)<a\}\subseteq\{W_1(g_n^\star,g_-)\ge a\}$, and hence
\begin{align}
&\quad\ \mathbb P_{+,n}\left(W_1(g_n^\star,g_+)\ge a\right)+\mathbb P_{-,n}\left(W_1(g_n^\star,g_-)\ge a\right) \notag\\
& \ge \mathbb P_{+,n}\left(W_1(g_n^\star,g_+)\ge a\right)+\mathbb P_{-,n}\left(W_1(g_n^\star,g_+)<a\right)\notag\\
& = \mathbb P_{+,n}(\phi_n=0)+\mathbb P_{-,n}(\phi_n=1). \notag
\end{align}
Applying Lemma~\ref{lem:BH} gives
\begin{equation*}
\mathbb P_{+,n}(\phi_n=0)+\mathbb P_{-,n}(\phi_n=1)\ge \frac12\exp \left(-\mathrm{KL}(\mathbb P_{+,n} || \mathbb P_{-,n})\right) \ge \frac12\exp \left(-\frac{2a^2}{c^2}\right),
\end{equation*}
therefore,
\begin{align*}
&\quad\ \sup_{g\in\mathcal P([-a,a])}
\mathbb P_{g}^{n} \left(W_1(g_n^\star,g)\ge a\right) \\
&\ge \max \left\{ \mathbb P_{+,n} \left(W_1(g_n^\star,g_+)\ge a\right), \mathbb P_{-,n} \left(W_1(g_n^\star,g_-)\ge a\right) \right\} \\
&\ge \frac12 \left(\mathbb P_{+,n}\left(W_1(g_n^\star,g_+)\ge a\right)+\mathbb P_{-,n}\left(W_1(g_n^\star,g_-)\ge a\right)\right) \\
&\ge \frac14\exp \left(-\frac{2a^2}{c^2}\right).
\end{align*}
Taking $\liminf_{n\to\infty}$ proves \eqref{eq:minimax-impossibility}.

Finally, if there existed an estimator sequence $g_n^\star$ such that
\begin{equation*}
W_1(g_n^\star,g)\to0 \quad\text{in }\mathbb P_{g}^{n}\text{-probability for every }g\in\mathcal P([-a,a]),
\end{equation*}
then in particular
\begin{equation*}
\mathbb P_{+,n} \left(W_1(g_n^\star,g_+)\ge a\right)\to 0 \quad\text{and}\quad \mathbb P_{-,n} \left(W_1(g_n^\star,g_-)\ge a\right)\to 0,
\end{equation*}
which contradicts the positive lower bound. 
\end{proof}
\end{appendix}

\end{document}